\theoremstyle{definition}
\DeclareMathOperator{\Spec}{Spec}
\newtheorem{defi}{Definition}[section]
\newtheorem{remark}[defi]{Remark}
\newtheorem{definition}[defi]{Definition}
\theoremstyle{plain}
\newtheorem{theorem}[defi]{Theorem}
\newtheorem{corollary}[defi]{Corollary}
\newtheorem{lemma}[defi]{Lemma}
\newtheorem{proposition}[defi]{Proposition}
\newtheorem*{theorem-intro}{Theorem}
\newcommand{\Strsh}[1]{\mathcal{O}_{#1}}
\title{Fundamental Group-Scheme of some rationally connected fibrations}
\author{Rodrigo Codorniu Cofr\'e}
\address{Rodrigo Codorniu Cofr\'e, Universit\'e C\^ote d'Azur, Laboratoire J.A.Dieudonn\'e
UMR CNRS-UNS N${}^o$7351 \\
Universit\'e de Nice Sophia-Antipolis
Parc Valrose
06108 NICE Cedex 2 \\
e-mail:codorniu@unice.fr, racodorn@uc.cl}
\date{}
\begin{document}

\begin{abstract}
In this paper we describe the fundamental group-scheme of a proper variety fibered over an abelian variety with rationally connected fibers over an algebraically closed field. We use old and recent results for the Nori fundamental group-scheme, and of finite group-schemes in general to prove that the kernel of such a fibration is finite, and that the homotopy exact sequence holds in this case. As an application, we describe the fundamental group-schemes of certain proper varieties that are connected by curves or are related to them.
\end{abstract}  
\maketitle

\textbf{Mathematics Subject Classification. 14F35, 14M22, 14D06, 14L15.}\\\indent
\textbf{Key words:} fundamental group-scheme, homotopy exact sequence, essentially finite vector bundle, rationally connected fibration.

\tableofcontents
\bigskip
\indent \textbf{Acknowledgements} 
This is part of my Ph.D. thesis, I would like to thank my advisors M. Antei and C. Pauly for their continued support and encouragement throughout these years. I would also like to thank M. Emsalem, F. Gounelas, F. Tonini and L. Zhang for the useful communications and exchanges related to this article. \\
This work was funded by the ANID  \footnote{ANID is the Chilean Agency of Research and Development, formerly known as CONICYT} Scholarship Program / DOCTORADO BECAS CHILE/2016 - 72170495 
\section*{Introduction} \label{sec:Intro}
In \cite{Nori76} and \cite{NoriTFGS81}, M.V. Nori developed the fundamental group-scheme, a pro-finite affine group-scheme that classifies pointed (pro-)finite torsors in terms of group-schemes, over reduced and connected schemes of finite type over a field with after fixing rational point. Moreover, if the scheme in question is proper, this group-scheme is also associated to the tannakian category of essentially finite bundles, by tannakian duality. More than 40 years later, many developments have strengthened the theory and have led to some variants of fundamental group-schemes coming from tannakian categories of vector bundles, like the S-fundamental group-scheme \cite{LangerSFGSI,LangerSFGSII} and generalizations like the fundamental gerbe by N. Borne and A. Vistoli in \cite{BorneVistoliFundGerbe}, with a recent article \cite{AntBiswEmsToniZhang2017} that extends the theory of the fundamental gerbe them even further, with strong consequences for the fundamental group-scheme that we will use here (Section \ref{subsec:Towers_of_torsors_and_FGS_of_Nori-reduced_torsors}). \\
On the other hand, there are only a few examples of schemes for which the fundamental group-scheme is known. It is known for proper normal rational schemes \cite[p. 93]{NoriTFGS81}, abelian varieties \cite{NoriFGSofAV83}, and rationally connected schemes. More precisely, normal rationally connected proper varieties have a finite fundamental group-scheme \cite{AnteiBiswasRatConnx16} and smooth proper separably rationally connected varieties have a trivial one \cite{Biswas2009SepRatCon}.\\
In his Ph.D. thesis \cite{GounelasFreeCurves}, F. Gounelas completely described elliptically connected varieties in characteristic 0: Either they are rationally connected, or they are fibered over an elliptic curve with rationally connected fibers.\\
Inspired by this result, in this article we will study the fundamental group-scheme of more general rationally connected fibrations over abelian varieties over an uncountable algebraically closed field. We have chosen abelian varieties as the base for two main reasons: the first is that the description of their fundamental group-schemes is the same regardless of dimension (Proposition \ref{prop:FGS_of_an_abelian_variety}), and thus we are not forced to just consider elliptic curves. And in second place, Nori-reduced torsors over an abelian variety are abelian varieties themselves (Remark \ref{remark:Remarks_about_the_FGS_of_an_abelian_variety}), while known examples of finite Nori-reduced torsors are not even reduced in general.\\ 
Using the fact that the fundamental group-schemes of abelian and rationally connected varieties are known as mentioned, we can state the main result of this article as follows: 
\begin{theorem-intro}[Theorem \ref{theorem:Main_theorem}]\label{teo:Main_theorem_A}
	Let $k$ be an uncountable algebraically closed field, and let $X$ be a proper variety over $k$. Assume there is a proper fibration $f:X \rightarrow S$ where $S$ is an abelian variety such that all geometric fibers are reduced, connected and possess a finite fundamental group-scheme (e.g $X$ is normal and the fibers are rationally connected). Then, there exist rational points $x \in X(k)$ and $s \in S(k)$ such that $f(x)=s$ and the following sequence of group-schemes is exact:
	$$\pi_{1}^{N}(X_{s},x) \rightarrow \pi_{1}^{N}(X,x) \rightarrow \pi_{1}^{N}(S,s) \rightarrow 1 .$$
\end{theorem-intro}
We remark that under the hypotheses of this theorem, the induced morphism between S-fundamental group-schemes is faithfully flat \cite[Lemma 8.1]{LangerSFGSI} and thus by Proposition \ref{prop:If_the_morphism_of_S-FGSs_is_faithfully_flat_then_so_is_the_morphism_between_FGSs} so is the induced morphism of fundamental group-schemes. \\
The fibration described in the main theorem is a particular case in which the homotopy exact sequence holds. To obtain the exact sequence in this case, we will use one of the equivalent conditions stated by L. Zhang in \cite{ZhangL13EGF}. We will show in the proof of Theorem \ref{theorem:Main_theorem} that one of these conditions is directly satisfied over a similar fibration $f^{\prime}:X^{\prime} \rightarrow S^{\prime}$ between Nori-reduced torsors over $X$ and $S$ respectively, satisfying the hypotheses of Lemma \ref{lemma:Homotopy_exact_sequence_when_the_kernels_descends_to_the_base}, such that we obtain the homotopy exact sequence for $f:X \rightarrow S$ out of the one obtained for $f^{\prime}$. \\
 Zhang's conditions are highly dependent on the rational points we choose on the varieties. But as $k$ is algebraically closed in our case, this choice is irrelevant, and we will show we can find a set of rational points for which the homotopy exact sequence holds, along with some simplifications to one of the conditions, called the base change condition (Definition \ref{definition:Base_change_condition}), in Section \ref{subsec:Base_change_condition}. \\
 To this purpose, we develop results and constructions, that we can roughly categorize in three groups: torsors, essentially finite bundles and finite group-schemes which are the main elements involved in the theory of the fundamental group-scheme. \\
 For torsors, stemming from the main results of \cite{AntBiswEmsToniZhang2017}, we show that projective limits of Nori-reduced torsors, that we call pro-NR torsors, possess a fundamental group-scheme (Proposition \ref{proposition:Pro-saturated_torsors_possess_a_FGS}) using the fact that finite Nori-reduced torsors over proper, reduced and connected schemes have a fundamental group-scheme and the existence of closures for towers of torsors (see Section \ref{subsec:Towers_of_torsors_and_FGS_of_Nori-reduced_torsors}). An immediate consequence of this is that the kernel of a faithfully flat morphism of fundamental group-schemes can be seen as the fundamental group-scheme of a projective limit of Nori-reduced torsors (Definition \ref{defi:Universal_pull-back_torsor} and Proposition \ref{prop:If_the_morphism_between_FGSs_is_faithfully_flat_then_the_universal_pull-back_has_a_FGS}). We also add some new terminology for the study of torsors relative to their relationship with the kernel of a morphism between fundamental group-schemes (Definition \ref{definition:Pure_pull-back_and_mixed_torsor}). And finally, we show that we can study torsors and their quotients out of their pull-backs over the generic geometric fiber (Corollary \ref{corollary:Good_correspondence_of_subgroups_implies_good_correspondence_of_quotient_torsors}) and we characterize the behavior of torsors, under the terminology of Definition \ref{definition:Pure_pull-back_and_mixed_torsor}, when pull-backed to the geometric generic fiber (Proposition \ref{prop:Pull-back_of_pure_torsors_to_the_geometric_generic_fiber} and Corollary \ref{coro:The_maximal_sub-torsor_of_a_mixed_torsor_over_the_geometric_generic_fiber_is_the_biggest_one_possible}). \\
 For essentially finite bundles, we characterize, using results from \cite{ZhangL13EGF}, their global sections in Section \ref{subsec:Global_sections_ess_fin_bundles}, and study their behavior over the geometric generic fiber of a fibration in Section \ref{subsec:Pull-back_of_pure_torsors}. We  will specifically apply these results for the essentially finite bundles coming from finite Nori-reduced torsors and in Section \ref{subsec:Base_change_condition} for the simplification of one of Zhang's equivalent conditions for the homotopy exact sequence. \\
 Finally, we show two results for finite group-schemes in Subsections \ref{subsubsec:Well_corresponded_subgroup-schemes} and \ref{subsubsec:Core_of_a_subgroup-scheme} respectively, that allows us to study torsors and their quotients when pull-backed to the geometric generic fiber. One shows that subgroups of a finite group-scheme $G$ over an algebraically closed field are in bijection (under base change) with subgroups of its base change over an algebraically closed extension of the base field (Proposition \ref{prop:Good_correspondence_for_finite_group_schemes}), and the second is the construction for finite group-schemes of an analogue for the core of a subgroup of an abstract group (Definition \ref{defi:Core_of_a_subgroup-scheme}), we show the classical properties of the core in abstract groups (Proposition \ref{prop:Properties_of_the_core_of_an_abstract_subgroup}) are also satisfied for finite group-schemes (Proposition \ref{prop:Properties_of_the_core_of_subgroup-schemes}). Both of these results are heavily used in Section \ref{subsec:Pull-back_of_pure_torsors}.\\ 
 Using the results from these three groups in conjunction, plus the fact that $S$ is an abelian variety (see Section \ref{subsec:Eta-xi_lemma}), we can show that the fibration of the main theorem has finite kernel (Proposition \ref{proposition:Finiteness_of_the_kernel}) with strong consequences (Corollary \ref{coro:Consequence_of_a_finite_kernel}) that allows us to show an specific case of a fibration for which we can directly show that one of Zhang's equivalent conditions for the homotopy exact sequence is satisfied (Lemma \ref{lemma:Homotopy_exact_sequence_when_the_kernels_descends_to_the_base}), in order to show the homotopy exact sequence holds for the fibration of the main theorem as we described in a previous paragraph. \\
As an application, we will describe the fundamental group-scheme and \'etale fundamental group of certain types of varieties connected by curves or adjacent to them in Chapter \ref{sec:Applications_to_curve_connectedness}, using some results of F. Gounelas \cite{GounelasFreeCurves}. We will show that in positive characteristic, the homotopy exact sequence holds for the varieties Gounelas described in his characterization of elliptically connected varieties in characteristic zero (Theorem \ref{teo:Rationally_connected_fibrations}), even if they might not longer be elliptically connected, as a particular case of the main theorem. Some of the results for varieties connected by curves in this chapter use the strong condition of having separably rationally connected geometric generic fibers, which have trivial fundamental group-scheme for smooth proper varieties. We show in Proposition \ref{prop:FGS_of_a_fibration_with_separably_rationally_connected_geometric_generic_fiber} a generalization, valid in positive characteristic, of a result of Kollár for the topological fundamental group for fibered complex varieties over $\mathbb{C}$ with general rationally connected fibers, see \cite[Theorem 5.2]{Kollar1993}. \\
This paper is structured as follows: In Chapter \ref{sec:Preliminaries} we define the main objects that we will work on. In Section \ref{subsec:Fundamental group-schemes} we will define and summarize the theory of the fundamental group-scheme, essentially finite and Nori-semistable bundles and the $S$-fundamental group-scheme together with some general results, plus Subsection \ref{subsubsec:Examples_of_known_FGSs} at the end to state the examples of known fundamental group-schemes we will use. In Section \ref{subsec:Global_sections_ess_fin_bundles} we will present some general useful results for global sections of essentially finite bundles. We finish this chapter with Section \ref{subsec:Towers_of_torsors_and_FGS_of_Nori-reduced_torsors}, where we summarize the consequences of \cite{AntBiswEmsToniZhang2017} for pointed finite torsors that we will use in this article. \\
In Chapter \ref{sec:FGS_of_pro-NR_torsors} we will define pro-NR torsors and show that they possess a fundamental group-scheme. \\
The main purpose of Chapter \ref{sec:Pull-back_to_the_geometric_generic_fiber} is to describe the behavior of essentially finite bundles and torsor over the geometric generic fiber in Section \ref{subsec:Pull-back_of_pure_torsors}. For the latter we need two general results for finite group-schemes, appearing in Subsections \ref{subsubsec:Well_corresponded_subgroup-schemes} and \ref{subsubsec:Core_of_a_subgroup-scheme}, that we will apply to the theory of torsors and representations of group-schemes.\\
In Chapter \ref{sec:Finiteness_of_the_kernel} we will establish the hypotheses of the main theorem for later sections (Section \ref{subsec:Setting_and_notation}), and then conceive the kernel of the induced morphism of fundamental group-schemes as the fundamental group-scheme of a pro-NR torsor under the right conditions (Section \ref{subsec:Universal_pull-back_torsor}). Then, we will apply this to our specific fibration to show that the kernel is finite in Section \ref{subsec:Proof_of_finiteness_for_the_kernel} with strong consequences. Section \ref{subsec:Eta-xi_lemma} contains a technical lemma needed for the finiteness of the kernel and the proof of the main theorem, it relies on the characterization of torsors over abelian varieties. \\
Chapter 5 contains the statement of the equivalent conditions for the homotopy exact sequence and the proof of the exact sequence for our particular fibration in Section \ref{subsec:Proof_of_main_theorem}, after defining the base change condition in Section \ref{subsec:Base_change_condition} together with some simplifications of this condition.\\
Finally, in Chapter \ref{sec:Applications_to_curve_connectedness} we will apply the results of the previous chapters to describe the fundamental group-schemes of some special cases of varieties connected by curves or associated to those. \\
Throughout this article we will often abbreviate ``fundamental group-scheme'' as FGS and ``Nori-reduced'' as NR.
\section*{Notations and conventions} \label{sec:Not_and_conv}
All group-schemes, except abelian varieties, will be affine and flat over $k$, and thus $G$-torsors $t:T \rightarrow X$ will be affine and faithfully flat over the base scheme. \\
When considering affine group-schemes as representable group-valued functors, we will do so as functors of points $\tilde{G}: \text{Alg}_{k}^{0} \rightarrow \text{Grp}$ where $\text{Alg}_{k}^{0}$ is the small category of $k$-algebras of the form $k[T_{1},\cdots,T_{n}]/I$ where $I$ is an ideal and $\{T_{i}\}_{i \in I}$ is a countable set of symbols. The full inclusion of categories $\text{Alg}_{k}^{0} \rightarrow \text{Alg}_{k}$ is an equivalence, where $\text{Alg}_{k}$ is the category of algebras of finite type and thus we will identify algebras of finite type with objects of $\text{Alg}_{k}^{0}$ using this equivalence.
\section{Preliminaries} \label{sec:Preliminaries}
\subsection{Fundamental group-schemes} \label{subsec:Fundamental group-schemes}
Let $X$ be a scheme of finite type over $k$, and let $x \in X(k)$ be a rational point. We say a $G$-torsor $t:T \rightarrow X$ is \textbf{pointed} if it has a fixed rational point $y \in T(k)$ such that $t(y)=x$. Different choices of points yield different pointed torsors. Morphisms of pointed torsors over $x$ are morphism of torsors $g:(T,t) \rightarrow (T^{\prime},t^{\prime})$ such that $g(t)=t^{\prime}$. 
\begin{defi} \label{defi:Having_a_fundamental_group-scheme}
		Let $X$ be a $k$-scheme having a rational point $x \in X(k)$. We say that $X$ \textbf{possesses a FGS} if there exists a pro-finite group-scheme $\pi_{1}^{N}(X,x)$ over $k$ and a pointed $\pi_{1}^{N}(X,x)$-torsor, denoted as $\hat{X} \rightarrow X$ and called \textbf{universal torsor}, that are unique up to isomorphism. The torsor $\hat{X}$ is universal in the sense that there exists a unique morphism of torsors $\hat{X} \rightarrow T$ for any pointed (pro-)finite torsor $T \rightarrow X$. \\
		Equivalently, $X$ possesses a FGS if there exists a bijection of sets
		$$ \left\{ t:T \rightarrow X: \, T \text{ is a pointed $G$-torsor} \right\} \overset{\sim}{\rightarrow} \left\{ \text{Arrows $\pi_{1}^{N}(X,x) \rightarrow G$ }  \right\} $$
		that is natural on $G$, for any (pro-)finite group-scheme $G$ over $k$. \\
		The bijection is given by taking a $G$-torsor $T \rightarrow X$, with unique morphism $\hat{X} \rightarrow T$, to the induced morphism $\pi_{1}^{N}(X,x) \rightarrow G$ of fibers over $x$, whose inverse consists of taking such an arrow of group-schemes and considering the contracted product torsor $\hat{X} \times^{\pi_{1}^{N}(X,x)} G$ along this morphism, which is pointed $G$-torsor.
\end{defi}
We will further describe the FGS and $\hat{X}$, but first, we need to define a special type of pointed torsors.
\begin{definition} \label{definition:Nori-reduced_torsor}
	Let $X$ be a $k$-scheme with a rational point $x \in X(k)$. A pointed torsor $t:T \rightarrow X$ is \textbf{Nori-reduced} if it does not possess any non-trivial pointed sub-torsor or equivalently, if any morphism of pointed torsors $T^{\prime} \rightarrow T$ over $X$ is faithfully flat.
\end{definition}
\begin{remark} \label{remark:Nori_reduced_torsors_in_terms_of_the_FGS}
	If $X$ possesses a FGS. Then a pointed $G$-torsor over $X$ is Nori-reduced if and only if the arrow $\pi_{1}^{N}(X,x) \rightarrow G$ is faithfully flat.
\end{remark}
In \cite{NoriTFGS81}, Nori showed that if $X$ is connected and reduced, it possesses a FGS as the category $\text{Tors}_{X,x}$ of pointed pro-finite torsors over $X$ is co-filtered This implies that $\hat{X}$ is the co-filtered limit of this category and moreover, that as a pro-finite group-scheme, the finite quotients of $\pi_{1}^{N}(X,x)$ are finite group-schemes corresponding to Nori-reduced torsors. \\
An important construction that we will use throughout this article is the following: Let $t:T \rightarrow X$ be a pointed $G$-torsor with $G$ finite. Let us suppose that the image of $\pi_{1}^{N}(X,x) \rightarrow G$ is a proper subgroup-scheme $H \subset G$, then there exists a Nori-reduced pointed $H$-torsor $V \rightarrow X$ which is the smallest sub-torsor of $T$, it is a closed sub-scheme of $T$. \\
From now on, we will assume that $X$ is proper and $k$ is perfect, in this case we have a richer description of the FGS in terms of its representations and a certain family of vector bundles over $X$.
\begin{definition} \label{defi:Finite_and_essentially_finite_bundles}
	Let $\mathcal{F}$ be a vector bundle over $X$. We say that $\mathcal{F}$ is \textbf{finite} if there exist two different polynomials $f,g \in \mathbb{Z}_{\geq 0}[x]$ such that $f(\mathcal{F}) \cong g(\mathcal{F})$ where for a polynomial $p(x)=a_{0}+a_{1}x+a_{2}x^{2} + \cdots + a_{n}x^{n}$ we define
	$$p(\mathcal{F}) = \Strsh{X}^{\oplus a_{0}} \oplus \mathcal{F}^{\oplus a_{1}} \oplus \left( \mathcal{F}^{\otimes 2} \right)^{\oplus a_{2}} \oplus \cdots \oplus \left( \mathcal{F}^{\otimes n} \right)^{\oplus a_{n}}.$$
	A vector bundle $\mathcal{E}$ is \textbf{essentially finite} if it is the kernel of a morphism of bundles $\varphi:\mathcal{F} \rightarrow \mathcal{G}$ where $\mathcal{F}$ and $\mathcal{G}$ are finite bundles.
\end{definition}
\begin{remark} \label{remark:Alternative_definition_essentially_finite_bundles}
	The definition of essentially finite bundles is due to N. Borne and A. Vistoli in \cite[Def. 7.7]{BorneVistoliFundGerbe}. The more classical definition is that $\mathcal{F}$ is essentially finite if it is Nori-semistable (see the definition below) and it is a ``sub-quotient of a finite bundle'', this means that there exists a finite bundle $\mathcal{F}$ and two sub-bundles $V^{\prime} \subset V \subset \mathcal{F}$ such that $\mathcal{E} = V/V^{\prime}$.
\end{remark}
We will also need the following family of vector bundles for a later proposition:
\begin{definition} \label{defi:Nori-semistable_bundles}
	A vector bundle $\mathcal{F}$ over $X$ is \textbf{Nori-semistable} if for any non-constant morphism $f:C \rightarrow X$ from a smooth and projective curve $C$, the pull-back bundle $f^{*}(\mathcal{F})$ is semi-stable of degree 0.
\end{definition}
\begin{remark} \label{remark:Remark_about_Nori-semistable_bundles}
	The definition we outline in this article is not standard, we are using a definition that allows us to utilize the theory of the S-fundamental group-scheme, developed first by Biswas, Parameswaran and Subramanian for curves \cite{BiswasParameswaranSubramanianSSGFCurv}, and later for more general schemes in \cite{LangerSFGSI,LangerSFGSII} by A. Langer. The terminology used in those articles is of ``numerically flat'' bundles. Another approach for this fundamental group-scheme that works for a wider class of schemes is outlined in \cite[\S 7]{BiswasHaiDosSantos2018Cociente}. Both approaches are stated for algebraically closed fields, Langer's approach works also when $k$ is perfect.
\end{remark} 
Let us denote as $\text{EF}(X)$ and $\text{NS}(X)$ the categories of essentially finite and Nori-semistable bundles respectively. If $\mathcal{F}$ is  finite bundle over $X$, then for $f:C \rightarrow X$ a morphism from a smooth projective curve, we have that $f^{*}(\mathcal{F})$ is finite and thus semi-stable of degree 0. As the category of semi-stable bundles of fixed slope is abelian (see \cite[Prop. 9]{Sesh82}), we see that the category $\text{EF}(X)$ is fully included in the category $\text{NS}(X)$. These categories are both special and are related to group-schemes in the following way:
\begin{definition} \label{defi:Tannakian_category}
	A \textbf{neutral tannakian category} over a field $k$, is a category $\mathcal{C}$ that is $k$-linear, abelian, rigid and a tensor category with $\text{End}(\mathbb{1})=k$, together with an additive tensor functor $\omega:\mathcal{C} \rightarrow \text{Vectf}_{k}$ to the category of finitely-dimensional $k$-vector spaces, called the \textbf{fiber functor}, that is is exact and faithful.  
\end{definition}
The quintessential neutral tannakian category is the category $\text{Rep}_{k}(G)$ of finitely-dimensional representations of an affine group-scheme $G$. If $G=\Spec(A)$ where $A$ is a Hopf algebra and $V$ is a finitely-dimensional $k$-vector space, for a \textbf{representation} of $G$ over $V$ we will mean indistinctly a comodule $\rho:V \rightarrow V \otimes_{k} A$ or a morphism of group-schemes $G \rightarrow GL(V)$ that in the level of functors of points, corresponds to morphisms of groups $\tilde{G}(R) \rightarrow \text{Aut}_{R}(V \otimes_{k} R)$. In this case the fiber functor $\omega_{G}$ that we associate to $\text{Rep}_{k}(G)$ is the forgetful functor that assigns to a representation, the underlying vector space $V$. \\
By a well-known result, called \emph{Tannakian correspondence}, any neutral tannakian category is equivalent to the category of representations of an affine group-scheme. Under this correspondence, morphism of group-schemes $f:G \rightarrow G^{\prime}$ correspond to tensor functors between tannakian categories $F:\text{Rep}_{k}(G^{\prime}) \rightarrow \text{Rep}_{k}(G)$ such that $\omega_{G^{\prime}}=\omega_{G} \circ F$. \\
 The tannakian correspondence pertains to $\text{EF}(X)$ and $\text{NS}(X)$, and in the next proposition we will outline the main properties of their corresponding group-schemes and the properties of them that we will use in this article:
\begin{proposition} \label{prop:Properties_of_the_Nori_and_S-fundamental_group-schemes}
	Let $X$ be a reduced, connected and proper scheme over $k$ with $x \in X(k)$. Then, the categories $\text{EF}(X)$ and $\text{NS}(X)$ are neutral tannakian with the fiber functor $\omega_{x}$ that assigns to a bundle its fiber over $x$. The group-schemes associated to $\text{EF}(X)$ and $\text{NS}(X)$ are $\pi_{1}^{N}(X,x)$ and $\pi_{1}^{S}(X,x)$ respectively and the following properties hold:
	\begin{enumerate}
		\item There is a natural faithfully flat morphism of group-schemes $\pi_{1}^{S}(X,x) \rightarrow \pi_{1}^{N}(X,x)$ corresponding from the full inclusion of categories $\text{EF}(X) \rightarrow \text{NS}(X)$. 
		\item Any finite quotient $\pi_{1}^{S}(X,x) \rightarrow G$ factors through $\pi_{1}^{N}(X,x)$.
	\end{enumerate}
\end{proposition}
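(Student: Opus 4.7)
The plan is to take the neutral tannakian structures on $\text{EF}(X)$ and $\text{NS}(X)$ as essentially known from the literature, and then deduce (1) and (2) from the standard tannakian dictionary together with the sub-quotient characterization of essentially finite bundles.

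First, for the tannakian claim, the statement for $\text{EF}(X)$ is due to Nori and is re-derived in the Borne--Vistoli formulation alluded to in Remark \ref{remark:Alternative_definition_essentially_finite_bundles}, while the statement for $\text{NS}(X)$ is due to Langer \cite{LangerSFGSI}. In both cases the key ingredients to verify (or invoke) are: $k$-linearity and abelian-ness as full subcategories of $\text{Coh}(X)$, closure under tensor products and duals, local freeness of each object so that the functor $\omega_{x}(\mathcal{F}) = \mathcal{F} \otimes_{\mathcal{O}_X} k(x)$ is exact and faithful, and the identity $\text{End}(\mathcal{O}_X) = k$ coming from $X$ being proper, reduced and connected. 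Tannakian reconstruction then produces affine group-schemes, and by Nori's original theorem these agree with $\pi_{1}^{N}(X,x)$ and $\pi_{1}^{S}(X,x)$ as defined via torsors.

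For (1), my approach is to invoke the standard tannakian criterion that a tensor functor of neutral tannakian categories (sharing a fiber functor) induces a faithfully flat morphism on reconstructed group-schemes if and only if it is fully faithful and its essential image is closed under taking sub-objects in the target. Full faithfulness of $\text{EF}(X) \hookrightarrow \text{NS}(X)$ is automatic, so the content lies in closure under sub-objects. I would reduce this to the sub-quotient characterization of Remark \ref{remark:Alternative_definition_essentially_finite_bundles}: any $\text{NS}(X)$-subbundle of a sub-quotient of a finite bundle is again a sub-quotient of a finite bundle, hence essentially finite.

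For (2), given a finite quotient $\pi_{1}^{S}(X,x) \twoheadrightarrow G$, the full tannakian subcategory $\mathcal{C}_{G} \subset \text{NS}(X)$ pulled back from $\text{Rep}_{k}(G)$ consists of $G$-representations viewed as Nori-semistable bundles. Since $G$ is finite, $\text{Rep}_{k}(G)$ has only finitely many simple objects, so for every $V \in \mathcal{C}_{G}$ the isomorphism classes of summands of $\bigoplus_{n} V^{\otimes n}$ form a finite set, forcing a relation $f(V) \cong g(V)$ with distinct $f,g \in \mathbb{Z}_{\geq 0}[x]$. The corresponding bundle is therefore finite in the sense of Definition \ref{defi:Finite_and_essentially_finite_bundles}, so $\mathcal{C}_{G} \subset \text{EF}(X)$, which by tannakian duality yields the factorization $\pi_{1}^{S}(X,x) \to \pi_{1}^{N}(X,x) \to G$. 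The main obstacle I anticipate is the closure-under-sub-objects step in (1), since this is precisely the place where $\text{EF}(X)$ could fail to inherit enough stability inside $\text{NS}(X)$; the reduction to the Borne--Vistoli sub-quotient description is the cleanest way to handle it, after which (2) is essentially a formal finiteness argument.
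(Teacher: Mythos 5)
The paper itself disposes of this proposition by citation (Nori and Biswas--Hai--dos Santos for the tannakian structures, Langer \cite{LangerSFGSI} for (1) and (2)), so you are supplying arguments the paper omits. Your treatment of the tannakian claim and of (1) is sound: the Deligne--Milne criterion reduces (1) to closure of $\text{EF}(X)$ under $\text{NS}(X)$-subobjects, and the preimage argument on the sub-quotient presentation $V/V'$ of Remark \ref{remark:Alternative_definition_essentially_finite_bundles} handles that correctly.

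The argument for (2) has a genuine gap. You claim that because $\text{Rep}_{k}(G)$ has finitely many simple objects, the isomorphism classes of (indecomposable) summands of $\bigoplus_{n} V^{\otimes n}$ form a finite set, forcing $f(V)\cong g(V)$. Finiteness in the sense of Definition \ref{defi:Finite_and_essentially_finite_bundles} is governed, via Krull--Schmidt, by the indecomposable summands of the tensor powers, not by the simple constituents, and for a non-semisimple finite group-scheme (e.g.\ $(\mathbb{Z}/p)^{2}$ in characteristic $p$) there are infinitely many indecomposables and no reason for the summands of the $V^{\otimes n}$ to range over a finite set. So the inference is a non sequitur, and the intermediate claim that every object of $\mathcal{C}_{G}$ is a \emph{finite} bundle is stronger than what is true. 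Fortunately you only need every object of $\mathcal{C}_{G}$ to be \emph{essentially} finite, and the standard repair is short: if $A=k[G]$ is the regular representation, then $A\otimes A\cong A^{\oplus \mathrm{ord}(G)}$, so the bundle corresponding to $A$ is finite; any finite-dimensional representation $W$ embeds into $A^{\oplus \dim W}$ since $A$ is an injective cogenerator of $\text{Rep}_{k}(G)$, so the bundle corresponding to $W$ is a Nori-semistable sub-bundle of a finite bundle, hence essentially finite by the sub-quotient characterization. With that substitution, $\mathcal{C}_{G}\subset \text{EF}(X)$ and the factorization in (2) follows by tannakian duality as you say.
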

The group-scheme $\pi_{1}^{S}(X,x)$ is known as the \textbf{S-fundamental group-scheme}.
\proof
For the proof that both $\text{EF}(X)$ and $\text{NS}(X)$ are tannakian, see \cite[Ch. 1]{NoriTFGS81} and \cite[Theorem 7.8]{BiswasHaiDosSantos2018Cociente} respectively. The proofs of (1) and (2) can be found in \cite[\S 6]{LangerSFGSI}. \\
\endproof
Now we can prove the following lemma:
\begin{lemma} \label{lemma:If_the_pull-back_of_a_bundle_is_ess_finite_then_it_is_ess_finite}
	Let $f:X \rightarrow S$ be a faithfully flat morphism between proper, reduced and connected $k$-schemes with $f_{*}(\Strsh{X})=\Strsh{S}$ and such that $f(x)=s$ for some $x \in X(k)$ and $s \in S(k)$. Let $V \in \text{Vect}(S)$ be a vector bundle such that $f^{*}(V)$ is essentially finite, then $V$ is essentially finite.
\end{lemma}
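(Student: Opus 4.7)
The plan is to prove essential finiteness of $V$ by first descending Nori-semistability along $f$, and then invoking the Tannakian characterization of essentially finite bundles as those whose associated representation of $\pi_{1}^{S}(S,s)$ has finite image.

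Step 1 (Nori-semistability of $V$): Given any non-constant morphism $g:C\rightarrow S$ from a smooth projective curve, I form the fibre product $X_C := X\times_{S}C$ with projections $p_X,p_C$. Since $p_C$ is faithfully flat (as the base change of $f$), it is surjective onto the irreducible curve $C$, so some irreducible component $Y\subset X_C$ dominates $C$. Applying Chow's lemma to $Y$ and then iterated general hyperplane sections (Bertini) on a projective birational model produces a smooth projective curve $\tilde{C}$ with a morphism $\iota:\tilde{C}\rightarrow Y\subset X_C$ such that $p_C\circ\iota$ is finite surjective onto $C$; the dimension count ensures the composition remains dominant. The relation $f\circ p_X=g\circ p_C$ gives
$$(p_X\circ\iota)^{*}f^{*}V=(p_C\circ\iota)^{*}g^{*}V.$$
Since $f^{*}V$ is essentially finite, it is Nori-semistable, so its pullback along $p_X\circ\iota$ is semistable of degree zero (trivially so if this map happens to be constant). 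Both semistability and degree zero of a bundle on a smooth projective curve descend along finite surjective morphisms between such curves, so $g^{*}V$ is semistable of degree zero on $C$. As $g$ was arbitrary, $V$ is Nori-semistable on $S$.

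Step 2 (Finiteness of the image): Now $V\in\text{NS}(S)$ corresponds under Proposition \ref{prop:Properties_of_the_Nori_and_S-fundamental_group-schemes} to a representation $\rho_S:\pi_{1}^{S}(S,s)\rightarrow GL(V_s)$, and $f^{*}V$ corresponds to $\rho_S\circ\phi$, where $\phi:\pi_{1}^{S}(X,x)\rightarrow\pi_{1}^{S}(S,s)$ is the morphism induced by $f$. By \cite[Lemma 8.1]{LangerSFGSI}, the hypotheses that $f$ be faithfully flat and $f_{*}\Strsh{X}=\Strsh{S}$ ensure that $\phi$ is faithfully flat, hence surjective as a morphism of group-schemes. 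Since $f^{*}V$ is essentially finite, $\rho_S\circ\phi$ factors through $\pi_{1}^{N}(X,x)$, which is pro-finite; because $V_s$ is finite-dimensional, the image of $\rho_S\circ\phi$ in $GL(V_s)$ is simultaneously of finite type and pro-finite, hence finite. Surjectivity of $\phi$ identifies this image with that of $\rho_S$, so $\rho_S$ factors through a finite quotient of $\pi_{1}^{S}(S,s)$, and by Proposition \ref{prop:Properties_of_the_Nori_and_S-fundamental_group-schemes}(2) it further factors through $\pi_{1}^{N}(S,s)$. This is exactly the condition that $V\in\text{EF}(S)$.

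The main obstacle is Step 1: the scheme $X_C$ need not be reduced, irreducible, or projective, yet one must produce a smooth projective curve in it dominating $C$. Chow's lemma applied to an irreducible component of $X_C$ surjecting onto $C$, followed by iterated general hyperplane sections on the resulting projective birational model, provides such a curve, and the composed morphism to $C$ is finite surjective by dimension counting. After this geometric input, Step 2 is a routine Tannakian computation that hinges only on the faithful flatness of $\phi$ granted by Langer's lemma.
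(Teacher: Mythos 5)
Your proof is correct, and while Step 1 follows the paper's argument almost verbatim in spirit, Step 2 takes a genuinely different route. For the Nori-semistability of $V$, the paper also passes through the fibre product $C\times_S X$ and produces a smooth projective curve $C'$ finite and surjective over $C$ (using \cite[Lemma p.56]{MumfAV70} rather than Chow plus Bertini), then descends semistability and degree zero along $c:C'\rightarrow C$ exactly as you do; your only loose point here is that general hyperplane sections of a possibly singular projective model need not be smooth, so you should normalize the resulting irreducible curve at the end (harmless over the perfect field $k$ in force in this section, and this is precisely what the citation of Mumford achieves). For essential finiteness, the paper instead uses the full faithfulness of $f^{*}:\text{NS}(S)\rightarrow\text{NS}(X)$ and its stability under subobjects (from \cite[2.21 (a)]{DeligneMilneTann}) to exhibit an explicit equivalence between a full subcategory $\mathcal{C}\subset\text{NS}(S)$ containing $V$ and $\text{Rep}_{k}(H)$ for a finite $H$, then concludes via trivialization on the associated torsor. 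You argue on monodromy groups: the representation $\rho_{S}\circ\phi$ of $\pi_{1}^{S}(X,x)$ factors through the pro-finite $\pi_{1}^{N}(X,x)$, its image in $GL(V_{s})$ is an algebraic quotient of a pro-finite group-scheme and hence finite, and surjectivity of $\phi$ transfers this finiteness to the image of $\rho_{S}$, after which Proposition \ref{prop:Properties_of_the_Nori_and_S-fundamental_group-schemes}(2) finishes the argument. Your version avoids the essential-surjectivity verification for the functor $f^{*}:\mathcal{C}\rightarrow\text{Rep}_{k}(H)$, which is the least transparent step of the paper's proof, and it makes visible that the only inputs beyond Langer's \cite[Lemma 8.1]{LangerSFGSI} are the finiteness of algebraic quotients of pro-finite group-schemes and part (2) of Proposition \ref{prop:Properties_of_the_Nori_and_S-fundamental_group-schemes}; the paper's version, in exchange, produces the explicit equivalence of categories that it then reuses in the proof of Proposition \ref{prop:If_the_morphism_of_S-FGSs_is_faithfully_flat_then_so_is_the_morphism_between_FGSs}.
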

\proof
First we notice that from \cite[Lemma 8.1]{LangerSFGSI} the induced morphism between S-fundamental group-schemes $\pi^{S}(f): \pi_{1}^{S}(X,x) \rightarrow \pi_{1}^{S}(S,s)$ is faithfully flat. This implies that the pull-back functor $f^{*}:NS(S) \rightarrow NS(X)$ is fully faithful and the essential image of this functor is ``closed by sub-objects'' (see \cite[2.21 (a)]{DeligneMilneTann}), which means that for any $W \in NS(S)$ and a sub-object of $U^{\prime} \subset f^{*}(W)$, there exists a sub-object $U \subset W$ such that $f^{*}(U) \cong U^{\prime}$. \\
Now let us prove that $V$ is Nori-semistable. For this purpose, let $v:C \rightarrow S$ be a non-constant morphism from a proper and smooth curve, if we take the fibered product $ C \times_{S} X$ this is a reduced, proper and connected scheme and we can always consider a morphism $C^{\prime} \rightarrow C \times_{S} X$ with $C^{\prime}$ a proper and smooth curve that passes through any pair of arbitrary points using \cite[Lemma p.56]{MumfAV70}. This gives us a morphism $w:C^{\prime} \rightarrow X$ making the following diagram commutative
$$ \xymatrix{C^{\prime} \ar[r]^{c} \ar[d]_{w}& C \ar[d]^{v}\\
             X \ar[r]_{f}&S} $$
and we can moreover chose the points on the fibered product such that $c:C^{\prime} \rightarrow C$ is finite, surjective and $w:C^{\prime} \rightarrow X$ is non-constant. Now we have that $w^{*}\left(f^{*}(V)\right) = c^{*}\left(v^{*}(V)\right)$ and as $f^{*}(V)$ is essentially finite, the left hand side of the equation is semi-stable of degree 0, then so is $v^{*}(V)$. Thus, $V$ is Nori-semistable. \\
Finally, to prove that $V$ is essentially finite, it suffices to prove that $V$ is contained in a tannakian full subcategory of $NS(S)$ associated to a finite group-scheme, as from \cite[Ch. I \S2]{NoriTFGS81}, essentially finite bundles are precisely those that become trivial after taking pull-backs to finite torsors and any vector bundle contained in a tannakian full subcategory $\text{Rep}_{k}(G)$ of coherent sheaves with $G$ affine becomes trivial after taking the pull-back along a $G$-torsor $T \rightarrow X$ associated to the category.\\
 Let $\left\langle f^{*}(V)\right\rangle^{\otimes}$ be the full subcategory of $NS(X)$ composed of bundles isomorphic to a sub-quotient of finite direct sums of copies of $f^{*}(V)$. As $f^{*}(V)$ is essentially finite, $\left\langle f^{*}(V)\right\rangle^{\otimes}$ is fully contained in a tannakian full subcategory $\text{Rep}_{k}(H)$ of Nori-semistable bundles with $H$ finite. Let $\mathcal{C}$ be the full subcategory of $NS(S)$ of bundles whose pull-backs belong to $\text{Rep}_{k}(H)$, clearly $V$ is an object of $\mathcal{C}$ and we have a restricted functor $f^{*}:\mathcal{C} \rightarrow \text{Rep}_{k}(H)$. From \cite[2.20 (a)]{DeligneMilneTann}, $\text{Rep}_{k}(H) \cong \left\langle W \right\rangle^{\otimes}$ with $W$ an essentially finite bundle, and thus we can easily see that $f^{*}$ is tensorial, exact, fully faithful and essentially surjective using the observation of the first paragraph, thus $\mathcal{C}$ and $\text{Rep}_{k}(H)$ are equivalent and $V$ is essentially finite.
\endproof
\begin{remark} \label{remark:Simplification_of_pull-back_of_an_essentially_finite_in_the_normal_case}
	If $X$ is a normal variety, we can characterize essentially finite bundles as bundles $V$ over $X$ as those that become trivial after taking the pull-back along a surjective morphism $g:Y \rightarrow X$ such that $g_{*}(\Strsh{Y})$ is coherent, see \cite{ToniniZhangTrivialized}. In this case, Lemma \ref{lemma:If_the_pull-back_of_a_bundle_is_ess_finite_then_it_is_ess_finite} is trivial and we only require that $f$ is proper. 
\end{remark}
The proof of Lemma \ref{lemma:If_the_pull-back_of_a_bundle_is_ess_finite_then_it_is_ess_finite} also yields the following:
\begin{proposition} \label{prop:If_the_morphism_of_S-FGSs_is_faithfully_flat_then_so_is_the_morphism_between_FGSs}
	Let $f:X \rightarrow S$ be a morphism between proper, reduced and connected schemes over $k$. Let us suppose that the induced morphism $\pi_{1}^{S}(X,x) \rightarrow \pi_{1}^{S}(X,x)$ is faithfully flat for rational points $x\in X(k)$ and $s \in S(k)$ satisfying $f(x)=s$ (e.g. assuming the hypotheses of Lemma \ref{lemma:If_the_pull-back_of_a_bundle_is_ess_finite_then_it_is_ess_finite}), then so is the induced morphism $\pi_{1}^{N}(X,x) \rightarrow \pi_{1}^{N}(S,s)$.
\end{proposition}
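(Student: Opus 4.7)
The plan is to invoke the Tannakian criterion for faithful flatness: a morphism $\phi\colon G\to H$ of affine group-schemes over $k$ is faithfully flat if and only if the pullback $\phi^{*}\colon \text{Rep}_{k}(H)\to \text{Rep}_{k}(G)$ is fully faithful and its essential image is closed under sub-objects (see \cite[2.21(a)]{DeligneMilneTann}). Under the Tannakian correspondence of Proposition~\ref{prop:Properties_of_the_Nori_and_S-fundamental_group-schemes}, this reduces the statement to showing that $f^{*}\colon \text{EF}(S)\to \text{EF}(X)$ is fully faithful with essential image closed under sub-objects.

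The key input I would reuse is already extracted in the proof of Lemma~\ref{lemma:If_the_pull-back_of_a_bundle_is_ess_finite_then_it_is_ess_finite}: applying the same Tannakian criterion to the hypothesis that $\pi_{1}^{S}(f)$ is faithfully flat gives the analogous statement one level up, namely that $f^{*}\colon \text{NS}(S)\to \text{NS}(X)$ is fully faithful and its essential image is closed under sub-objects. Full faithfulness of $f^{*}$ on $\text{EF}$ then comes for free, since by Proposition~\ref{prop:Properties_of_the_Nori_and_S-fundamental_group-schemes} the inclusions $\text{EF}(S)\hookrightarrow \text{NS}(S)$ and $\text{EF}(X)\hookrightarrow \text{NS}(X)$ are full, so $\text{Hom}$-sets computed in $\text{EF}$ agree with those computed in $\text{NS}$.

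What remains is closure under sub-objects in $\text{EF}$. Given $V\in \text{EF}(S)$ and an essentially finite sub-bundle $U'\subset f^{*}(V)$, the $\text{NS}$-level closure produces $U\subset V$ in $\text{NS}(S)$ with $f^{*}(U)\cong U'$, and the only point left is to upgrade $U$ from Nori-semistable to essentially finite. For this I would invoke the characterization recalled in Remark~\ref{remark:Alternative_definition_essentially_finite_bundles}: $V$ is a sub-quotient of some finite bundle $F$, hence lies in the Tannakian subcategory $\langle F\rangle^{\otimes}\subset \text{NS}(S)$ generated by $F$, which is equivalent to $\text{Rep}_{k}(G)$ for some finite group-scheme $G$. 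Since any Tannakian subcategory is by definition closed under sub-objects of its ambient neutral Tannakian category, $U\subset V$ in $\text{NS}(S)$ must itself belong to $\langle F\rangle^{\otimes}\subset \text{EF}(S)$, and we are done.

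I do not expect a serious obstacle: once the proof of Lemma~\ref{lemma:If_the_pull-back_of_a_bundle_is_ess_finite_then_it_is_ess_finite} is in place, the argument is purely formal Tannakian bookkeeping, and no geometric input of the form $f_{*}\Strsh{X}=\Strsh{S}$ or of the curve-selection type is needed, because the faithful flatness of $\pi_{1}^{S}(f)$ is assumed directly. The one subtle point is the ``finite-group envelope'' step in the preceding paragraph, which is exactly the place where the specific structure of essentially finite bundles, as sub-quotients of finite bundles, is used to bridge the gap between the $\text{NS}$-level and the $\text{EF}$-level closure properties.
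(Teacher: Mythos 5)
Your argument is correct, and it reaches the conclusion by a genuinely different (and arguably more direct) route than the paper. The paper fixes $V\in\text{EF}(S)$, passes to the finite monodromy group-schemes $G$ of $V$ and $H$ of $f^{*}(V)$, deduces from the equivalence $\mathcal{C}\simeq\text{Rep}_{k}(H)$ established at the end of the proof of Lemma \ref{lemma:If_the_pull-back_of_a_bundle_is_ess_finite_then_it_is_ess_finite} that $H\rightarrow G$ is an isomorphism, and then concludes via Proposition \ref{prop:Properties_of_the_Nori_and_S-fundamental_group-schemes}(2) that every finite quotient of $\pi_{1}^{N}(S,s)$ is hit faithfully flatly by $\pi_{1}^{N}(X,x)$. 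You instead apply the criterion of \cite[2.21 (a)]{DeligneMilneTann} directly to $\pi_{1}^{N}(X,x)\rightarrow\pi_{1}^{N}(S,s)$ and transport full faithfulness and closure under sub-objects from the $\text{NS}$ level (which is exactly the first paragraph of the proof of Lemma \ref{lemma:If_the_pull-back_of_a_bundle_is_ess_finite_then_it_is_ess_finite}) down to the $\text{EF}$ level. The one extra ingredient you need is that $\text{EF}(S)$ is closed under sub-objects inside $\text{NS}(S)$; your derivation of this from the sub-quotient-of-a-finite-bundle characterization in Remark \ref{remark:Alternative_definition_essentially_finite_bundles} works, and in fact this closure property is precisely the content of the faithful flatness of $\pi_{1}^{S}\rightarrow\pi_{1}^{N}$ in Proposition \ref{prop:Properties_of_the_Nori_and_S-fundamental_group-schemes}(1), which you could cite instead of re-deriving it. Both proofs rest on the same input, namely the faithful flatness of $\pi^{S}(f)$ read through the Tannakian criterion; yours stays purely at the level of categorical bookkeeping and avoids the reduction to finite monodromy groups, while the paper's version makes the finite group-scheme square explicit, a picture it reuses elsewhere. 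Your closing remark that no hypothesis of the form $f_{*}(\Strsh{X})=\Strsh{S}$ is needed is also accurate, since the proposition assumes the faithful flatness of $\pi^{S}(f)$ outright.
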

\proof
Let $V$ be an essentially finite bundle over $S$, we can consider the smallest tannakian full sub-category $\text{Rep}_{k}(G)$ of $\text{EF}(S)$ containing $V$, with $G$ finite. If $\text{Rep}_{k}(H)$ is the same tannakian full sub-category of $\text{EF}(X)$ applied to $f^{*}(V)$, we see from the proof of Lemma \ref{lemma:If_the_pull-back_of_a_bundle_is_ess_finite_then_it_is_ess_finite} that they are equivalent. If we translate this into group-schemes, we have a commutative square
$$ \xymatrix{\pi_{1}^{S}(X,x) \ar[d] \ar[r]&\pi_{1}^{S}(S,s) \ar[d] \\
             H \ar[r] &G}$$ 
where the upper horizontal arrow and both vertical arrows are faithfully flat, and the lower horizontal arrow is an isomorphism. Then, by Proposition \ref{prop:Properties_of_the_Nori_and_S-fundamental_group-schemes} (2) we conclude the proof.
\endproof
\subsubsection{Examples of known fundamental group-schemes} \label{subsubsec:Examples_of_known_FGSs}
Here we will state the results for the known examples of fundamental group-schemes we will use in this article. Starting with abelian varieties, we have the following:
\begin{proposition}[\cite{NoriFGSofAV83}] \label{prop:FGS_of_an_abelian_variety}
	Let $S$ be an abelian variety over a perfect field $k$. For $n \geq 1$, let $S[n]$ be the kernel of the multiplication morphism $m_{n}:S \rightarrow S$, which is a Nori-reduced $S[n]$-torsor. Then 
	$$\pi_{1}^{N}(S,0)= \underset{n}{\lim_{\leftarrow}} S[n].$$
\end{proposition}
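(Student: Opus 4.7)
The plan is to identify the family $\{m_n : S \to S\}_{n \ge 1}$ of multiplication maps as a cofinal system of Nori-reduced pointed torsors in $\mathrm{Tors}_{S,0}$, so that by the general description of the fundamental group-scheme as the limit of structure groups of Nori-reduced torsors (recalled in Section~\ref{subsec:Fundamental group-schemes}), the isomorphism $\pi_{1}^{N}(S,0) = \underset{n}{\lim_{\leftarrow}} S[n]$ follows formally.

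First, I would check that each $m_n : S \to S$ is a pointed $S[n]$-torsor, taking the identity $0 \in S(k)$ as base point in both total space and base. This is classical: $m_n$ is finite faithfully flat with kernel $S[n]$, which acts freely and transitively on the fibers by translation. To see that this torsor is Nori-reduced, I would consider a pointed sub-torsor $(T^{\prime}, H) \hookrightarrow (S, S[n])$ with $H \subseteq S[n]$ a closed subgroup-scheme. As a closed subscheme of the total space $S$, the scheme $T^{\prime}$ satisfies that $m_n|_{T^{\prime}} : T^{\prime} \to S$ is an $H$-torsor, hence finite faithfully flat. In particular $\dim T^{\prime} = \dim S$, so the topological support of $T^{\prime}$ is all of the irreducible variety $S$, and since $S$ is reduced this forces $T^{\prime} = S$ as closed subschemes, whence $H = S[n]$.

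Next, I would organize these torsors into a cofiltered inverse system over the divisibility poset. For $m \mid n$ with $n = mk$, the multiplication $m_k : S \to S$ provides a morphism of pointed torsors from $(S, m_n)$ to $(S, m_m)$ over $S$, compatible with the restriction $m_k|_{S[n]} : S[n] \to S[m]$ on structure groups. The projective limit is then a pro-finite pointed torsor over $S$ with structure group $\underset{n}{\lim_{\leftarrow}} S[n]$.

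The essential step is to show that $\{m_n\}$ is cofinal in $\mathrm{Tors}_{S,0}$, i.e.\ every finite Nori-reduced pointed torsor $t : T \to S$ is dominated by some $m_n$. Note first that Nori-reducedness forces $T$ to be connected (the connected component of the base point, together with the identity component of the structure group, always forms a pointed sub-torsor). Then I would invoke the classical theorem (Serre--Lang in the étale case, extended by Nori to arbitrary finite torsors) asserting that the total space $T$ inherits a unique structure of commutative group-scheme with identity the base point, in such a way that $t$ becomes an isogeny with kernel the structure group $G$. Granting this, $G \subseteq T[n]$ for some $n$, so the multiplication $m_n^T : T \to T$ vanishes on $G$ and factors uniquely as $m_n^T = g \circ t$ for an isogeny $g : S \to T$. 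Post-composing with $t$ yields $t \circ g = m_n^S$, which supplies a morphism of pointed torsors $(S, m_n) \to (T, t)$ compatible with the restriction $S[n] \to G$ of $g$, as required.

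Combining these steps, the universal torsor is $\underset{n}{\lim_{\leftarrow}} (S, m_n) \to S$ and $\pi_{1}^{N}(S,0) = \underset{n}{\lim_{\leftarrow}} S[n]$. The main obstacle is the third step: while Serre--Lang handles étale torsors directly, the extension to infinitesimal torsors in positive characteristic requires Nori's finer argument, which hinges on the rigidity of pointed morphisms from abelian varieties and a careful treatment of extensions of $S$ by finite group-schemes.
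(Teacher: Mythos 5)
Your proposal is correct and follows essentially the same route as the paper, which gives no proof here beyond citing \cite{NoriFGSofAV83} and recording in Remark \ref{remark:Remarks_about_the_FGS_of_an_abelian_variety} exactly the domination statement (every pointed Nori-reduced $G$-torsor over $S$ is dominated by some $m_{N}$, yielding a faithfully flat $S[N] \rightarrow G$) on which your cofinality argument rests. The one caveat is that your third step --- the Serre--Lang-type theorem endowing the total space $T$ with an abelian-variety structure making $t$ an isogeny --- is precisely the hard content of Nori's cited paper (and, as the remark points out, it relies on the K\"unneth formula for $\pi_{1}^{N}$), so your argument, like the paper's, ultimately defers to that reference for the essential input.
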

\begin{remark} \label{remark:Remarks_about_the_FGS_of_an_abelian_variety}
	In \cite{NoriFGSofAV83}, Nori showed that if $t:T \rightarrow S$ is a pointed Nori-reduced $G$-torsor over $0$, then there exists a unique integer $N \geq 1$ an a morphism of torsors $p:S \rightarrow T$ such that $t \circ p = m_{N}$, and thus we have a faithfully flat morphism of group schemes $S[N] \rightarrow G$. At the time Nori published this result, the K\"{u}nneth formula was not yet established for the FGS but he conjectured the limit formula for $\pi_{1}^{N}(S,0)$ from this result, assuming the formula. Later, Mehta and Subramanian showed the K\"{u}nneth formula in \cite{MethaSubramanianEGF}. \\
	The morphism $p:S \rightarrow T$ implies that $T$, as the contracted product of an abelian variety, is an abelian variety itself. When $k$ is algebraically closed, this holds even if $T$ is not pointed over $0$: if $T$ is pointed over $s \neq 0$, by post-composing $t$ by a translation, we see that $T$ can be considered to be pointed over $0$. Thus, we will get a morphism of torsors $p:S \rightarrow T$ over $0$ that can be made into a morphism of torsors over $s$ again by composing with another translation afterwards.
\end{remark}
Now let us consider the other family of schemes with known FGS that we will use:
\begin{defi}[Definition 3.2 \S IV.3 \cite{KollarBookRat}] \label{defi:Rationally_connected_varieties}
	Let $X$ be a variety over $k$, we will not assume $X$ is proper. We say for that $X$ is \textbf{rationally connected (resp. rationally chain connected)}, if there exist a proper and flat family of curves $\mathcal{C} \rightarrow Y$ where $Y$ is a variety, whose geometric fibers are proper smooth irreducible rational curves (resp. connected curves with smooth irreducible components that rational curves), such that there exists a morphism $u:\mathcal{C} \rightarrow X$, making $u^{(2)}:\mathcal{C} \times_{Y} \mathcal{C} \rightarrow X \times_{k} X$ dominant. \\
	Moreover, if $X$ is rationally connected (resp. rationally chain connected) and $u^{(2)}$ is smooth at the generic point, we say that $X$ is \textbf{separably rationally connected (resp. rationally chain connected)}.
\end{defi} 
\begin{remark} \label{remark:Remarks_rationally_connected_varieties}
	If $\text{char}(k)=0$, by generic smoothness, a rationally connected (resp. rationally chain connected) variety is also separably rationally connected (resp. separably rationally chain connected), thus these notions coincide. In positive characteristic, there are examples of rationally connected varieties that are not separably rationally connected, see \cite[Ch. V 5.19]{KollarBookRat}.   
\end{remark}
In terms of the FGS, we have the following result:
\begin{proposition} \label{prop:FGS_of_rationally_connected_varieties}
	Let $X$ be a proper and normal variety over an algebraically closed field $k$. If $X$ is rationally chain connected, then $\pi_{1}^{N}(X)$ is finite. \\
	If in addition, $X$ is separably rationally connected and smooth, then its FGS is trivial.
\end{proposition}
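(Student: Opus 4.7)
Both assertions already appear in the literature, in \cite{AnteiBiswasRatConnx16} and \cite{Biswas2009SepRatCon} respectively; the plan here is to outline the key ideas rather than rederive them in full.

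For the first statement, the approach is to separately control the \'etale and the local (infinitesimal) quotients of $\pi_{1}^{N}(X,x)$. The \'etale quotient coincides with $\pi_{1}^{\text{\'et}}(X,x)$, which is finite for a normal proper rationally chain connected variety by the classical result of Campana and Koll\'ar--Miyaoka--Mori, so it remains to bound the local part. The key observation I would exploit is that any essentially finite $V \in \text{EF}(X)$ restricts to a trivial bundle on every rational curve in $X$: indeed, the pullback of $V$ along a non-constant morphism $\mathbb{P}^{1} \to X$ is semistable of degree zero on $\mathbb{P}^{1}$, hence a trivial direct sum by Grothendieck's splitting theorem. Applying this to a family $\mathcal{C} \to Y$ witnessing rational chain connectedness and using the dominance of $u^{(2)}:\mathcal{C}\times_{Y}\mathcal{C}\to X\times_{k}X$, the fibers of $V$ at any two points of $X$ are identified through chains of rational curves, which should produce a uniform bound on the tannakian subcategory generated by $V$ and hence show that $\text{EF}(X)$ corresponds to a finite group-scheme. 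The delicate step, and the main obstacle, is making this bound uniform in $V$ so that the whole of $\text{EF}(X)$, not merely one tensor-generated subcategory, is tannakian-finite.

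For the second statement, the approach is to exploit the existence of \textbf{very free} rational curves $f:\mathbb{P}^{1} \to X$, with $f^{*}T_{X}$ ample, which are available under smoothness plus separable rational connectedness. By the first part we already know $\pi_{1}^{N}(X,x)$ is finite, so it suffices to show that every Nori-reduced finite $G$-torsor $t:T \to X$ is trivial. For such a torsor, the pullback $f^{*}T \to \mathbb{P}^{1}$ is trivial since $\pi_{1}^{N}(\mathbb{P}^{1})=1$. The ampleness of $f^{*}T_{X}$ gives $H^{1}(\mathbb{P}^{1},f^{*}T_{X})=0$, so $f$ deforms in a dominating family of curves covering $X$, and a formal deformation-theoretic gluing of the curvewise trivializations should produce a trivialization of $T$ over $X$, forcing $G=1$. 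The main obstacle here is handling the non-reduced case of $G$, where infinitesimal torsors must be ruled out directly at the level of torsors rather than by \'etale arguments; this is precisely where the strong numerical positivity of $f^{*}T_{X}$ (ampleness, not merely nefness or global generation) is essential.
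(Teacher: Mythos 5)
Your proposal takes essentially the same route as the paper: the paper's proof of this proposition consists precisely of citing \cite{AnteiBiswasRatConnx16} for the first assertion and \cite{Biswas2009SepRatCon} for the second, which is exactly how you open. The additional sketches you append (Grothendieck splitting on chains of rational curves, very free curves and vanishing of $H^{1}$) are a reasonable outline of the ideas in those references but are not part of the paper's argument and, as you yourself note, are not complete proofs; the citation alone is what carries the statement here.
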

\proof 
For the first assertion, see \cite{AnteiBiswasRatConnx16}. The second one can be found in \cite{Biswas2009SepRatCon}.
\endproof
\subsection{Global sections of essentially finite bundles} \label{subsec:Global_sections_ess_fin_bundles}
Let $X$ be a proper, reduced and connected scheme of finite type over a perfect field $k$ with $x \in X(k)$. We will describe the relationship between global sections of essentially finite bundles and their properties. We start with the following definition:
\begin{defi} \label{defi:Order_of_a_finite_scheme}
	Let $Y$ be a finite scheme over $k$. This implies that $Y=\Spec(A)$ is affine where $A$ is a $k$-algebra that is finitely dimensional as a $k$-vector space. In this case we define the \textbf{order of} $Y$ as $\text{ord}(Y):=\dim_{k}(A)$.
\end{defi}
Let $V$ be an essentially finite bundle over $X$, we recall the following result of L. Zhang:
\begin{lemma} [Lemma 2.2 \cite{ZhangL13EGF}] \label{lemma:Zhangs_lemma}
	The natural morphism $\Gamma(X,V) \otimes_{k} \Strsh{X} \rightarrow V$ is an embedding that makes $\Gamma(X,V) \otimes \Strsh{X}$ the maximal trivial sub-bundle of $V$.
\end{lemma}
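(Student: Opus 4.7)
The plan is to leverage the tannakian description of essentially finite bundles as representations of $G := \pi_{1}^{N}(X,x)$, combined with trivialization on a Nori-reduced finite torsor. First, I would interpret global sections of $V$ as $G$-invariants in the fiber $V_{x}$: since $\text{EF}(X)$ is a full subcategory of $\text{Vect}(X)$, one has
$\Gamma(X, V) = \text{Hom}_{\Strsh{X}}(\Strsh{X}, V) = \text{Hom}_{\text{EF}(X)}(\Strsh{X}, V)$,
and tannakian duality (Proposition \ref{prop:Properties_of_the_Nori_and_S-fundamental_group-schemes}) translates this into $\text{Hom}_{G}(k, V_{x}) = V_{x}^{G}$, where $k$ denotes the trivial one-dimensional representation.

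Next, to show that the natural morphism $\varphi: \Gamma(X, V) \otimes_{k} \Strsh{X} \to V$ is an embedding of sub-bundles, I would pick a pointed Nori-reduced finite torsor $p: T \to X$ through which the representation attached to $V$ factors, so that $p^{*}(V) \cong V_{x} \otimes_{k} \Strsh{T}$ equivariantly for its structure group. Since $T$ is proper, reduced and connected (Nori-reducedness together with the same properties for $X$), we have $\Gamma(T, \Strsh{T}) = k$, and under this trivialization $p^{*}(\varphi)$ identifies with the map of free $\Strsh{T}$-modules $V_{x}^{G} \otimes_{k} \Strsh{T} \hookrightarrow V_{x} \otimes_{k} \Strsh{T}$ induced by the inclusion $V_{x}^{G} \hookrightarrow V_{x}$ of $k$-vector spaces. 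This is manifestly split with locally free cokernel $(V_{x}/V_{x}^{G}) \otimes_{k} \Strsh{T}$, so by fpqc descent along the faithfully flat morphism $p$, $\varphi$ itself is the inclusion of a sub-bundle of $V$.

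For maximality, suppose $\iota: \Strsh{X}^{\oplus n} \hookrightarrow V$ is a trivial sub-bundle. The $n$ coordinate sections are $k$-linearly independent in $\Gamma(X, V)$, since $\iota$ is injective on the fiber over $x$, and they span a subspace $W \subseteq \Gamma(X, V)$ of dimension $n$. Hence $\iota$ factors as
$\Strsh{X}^{\oplus n} \cong W \otimes_{k} \Strsh{X} \hookrightarrow \Gamma(X, V) \otimes_{k} \Strsh{X} \hookrightarrow V$,
showing every trivial sub-bundle sits inside the image of $\varphi$. The main technical point I expect to confront is checking that the cokernel of $\varphi$ is locally free, which is what upgrades an injection of coherent sheaves to an embedding of sub-bundles; this obstacle dissolves after the pull-back along the trivializing torsor, where the statement reduces to elementary linear algebra over $k$ and descends back by flatness of $p$.
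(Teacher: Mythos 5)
The paper quotes this lemma from Zhang without proof, so there is no internal argument to compare against; your tannakian argument is the standard one (and essentially Zhang's) and is correct: the identification $\Gamma(X,V)=\mathrm{Hom}_{\mathrm{EF}(X)}(\Strsh{X},V)\cong V_x^{G}$, the trivialization on a Nori-reduced torsor, and fpqc descent of the locally free cokernel all work as you describe, as does the maximality step. One small repair: you justify $\Gamma(T,\Strsh{T})=k$ by asserting that $T$ is reduced, but a finite torsor over a reduced scheme need not be a reduced scheme (the introduction of this paper explicitly notes that Nori-reduced torsors are in general non-reduced); the correct justification is Nori-reducedness itself, which by the criterion recalled just below the lemma (\cite[II Prop. 3]{NoriTFGS81}) says precisely that $\Gamma(T,\Strsh{T})=\Gamma(X,p_*\Strsh{T})=k$. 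With that substitution everything goes through as written.
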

\begin{remark} \label{remark:Trivial_ess_finite_bundles_in_terms_of_global_sections}
	An immediate consequence of the lemma is that an essentially finite bundle $V$ of rank $r$ is trivial if and only if $\dim_{k}(\Gamma(X,V))=r$. Also we have in general that $0 \leq \dim_{k}(\Gamma(X,V)) \leq r$.
\end{remark}
Now let $t:T \rightarrow X$ be a $G$-torsor with $G$ finite. In this case $V_{T}:=t_{*}(\Strsh{T})$ is a finite bundle over $X$ satisfying $V_{T} \otimes V_{T} \cong V_{T}^{\oplus n}$ where $n=\text{ord}(G)$. It is a well-known fact that $T$ is Nori-reduced if and only if $\Gamma(X,V_{T})=k$ (\cite[II Prop. 3]{NoriTFGS81}). \\
Excluding the trivial and Nori-reduced cases, we can characterize the global sections of $V_{T}$. But first, we need some terminology: Let $H$ be the image of $\pi_{1}^{N}(X,x) \rightarrow G$, it is a proper subgroup-scheme of $G$. If $G=\Spec(A)$ where $A$ is a Hopf algebra over $k$, we will write $H=\Spec(A/I)$ where $I$ is a Hopf ideal. Let $\rho_{G}:A \rightarrow A \otimes A$ be the regular representation of $G$, we must point out that it is not only a $k$-linear morphism but also a $k$-algebra morphism. If we restrict this representation using the projection $A \rightarrow A/I$, we obtain the representation $\tau:A \rightarrow A \otimes A/I$ corresponding to the action coming from restricting the multiplication morphism $m:G \times_{k} G \rightarrow G$ of $G$ to $H$ on one coordinate. The categorical quotient of this latter action is a finite scheme $G/H$ which corresponds to the sub-algebra $A^{H}:=\{a \in A: \, \tau(a) = a \otimes \bar{1} \}$ of elements that are trivially acted upon, it is the maximal sub-space of $A$ on which the $\tau$ acts trivially. If $H$ is normal, then $G/H$ is moreover a group-scheme.
\begin{corollary} \label{coro:Global_sections_of_bundles_coming_from_torsors}
	Let $t:T \rightarrow X$ be a $G$-torsor with $G$ finite that is neither Nori-reduced nor trivial. Then, we have that $\dim_{k}(\Gamma(X,V_{T}))= \text{ord}(G/H)$.
\end{corollary}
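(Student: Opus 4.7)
The plan is to realize $T$ as a contracted product along the Nori-reduced $H$-sub-torsor $v: V \to X$ constructed earlier in this subsection: the Nori-reduced pointed $H$-torsor which is the smallest sub-torsor of $T$ and which corresponds to the factorization $\pi_{1}^{N}(X,x) \twoheadrightarrow H \hookrightarrow G$ of the arrow associated with $T$. The key input is that, $V$ being Nori-reduced, the characterization recalled immediately before the statement gives $\Gamma(V, \Strsh{V}) = k$.

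Since $V \hookrightarrow T$ is a closed $H$-sub-torsor of the $G$-torsor $T$, standard torsor theory yields a canonical isomorphism of $G$-torsors $T \cong V \times^{H} G$ over $X$. Writing $V \times^{H} G = (V \times_{k} G)/H$, the pushforward of its structure sheaf to $X$ is computed by $H$-invariants:
\[
t_{*}(\Strsh{T}) \;\cong\; \bigl( v_{*}(\Strsh{V}) \otimes_{k} A \bigr)^{H},
\]
where the $H$-action on $A$ is the one coming from the contracted product construction. Taking global sections and using $\Gamma(V, \Strsh{V}) = k$ yields
\[
\Gamma(X, V_T) \;\cong\; (k \otimes_k A)^{H} \;=\; A^{H},
\]
which by the very definition of $G/H$ given in the paragraph preceding the statement has $k$-dimension $\text{ord}(G/H)$.

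The only step requiring a small amount of care is the identification of the $H$-action on $A$ appearing in the contracted product with the representation $\tau$ defined in the text, but this is a routine matching of conventions. Alternatively, one can bypass it altogether via Lemma \ref{lemma:Zhangs_lemma}: under the tannakian equivalence $\mathrm{EF}(X) \cong \mathrm{Rep}_k(\pi_{1}^{N}(X,x))$, the bundle $V_T$ corresponds to the regular representation of $G$ pulled back through $\pi_{1}^{N}(X,x) \to G$, and Zhang's lemma identifies $\Gamma(X, V_T)$ with its subspace of $\pi_{1}^{N}(X,x)$-invariants. Since the action factors through $H$, this subspace is $A^{H}$, of dimension $\text{ord}(G/H)$, finishing the proof.
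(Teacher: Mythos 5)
Your proposal is correct, but your primary argument takes a genuinely different route from the paper's, while the ``alternative'' you sketch at the end is essentially the paper's own proof. The paper works entirely on the tannakian side: it observes that $V_{T}$ is the image of the regular representation $\rho_{G}$ under $\text{Rep}_{k}(G) \rightarrow \text{Rep}_{k}(H) \rightarrow \text{Rep}_{k}(\pi_{1}^{N}(X,x))$, hence also the image of $\tau \in \text{Rep}_{k}(H)$, and then invokes the closure-under-subobjects property of the fully faithful functor attached to the faithfully flat map $\pi_{1}^{N}(X,x) \rightarrow H$ (Deligne--Milne 2.21(a)) to match the maximal trivial sub-bundle of $V_{T}$ --- which is $\Gamma(X,V_{T}) \otimes_{k} \Strsh{X}$ by Lemma \ref{lemma:Zhangs_lemma} --- with the maximal trivial sub-representation $A^{H}$ of $\tau$. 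Your main route instead writes $T \cong V \times^{H} G$ over the minimal Nori-reduced $H$-sub-torsor $V$, computes $t_{*}(\Strsh{T}) \cong \left( v_{*}(\Strsh{V}) \otimes_{k} A \right)^{H}$, and concludes from $\Gamma(V,\Strsh{V})=k$ together with left-exactness of global sections and of taking invariants; this is more geometric and avoids the tannakian subobject lemma entirely, at the cost of having to verify the sheaf-level identification of the pushforward and to match the $H$-action on $A$ (left translation from the contracted product versus the restricted regular coaction $\tau$ in the text) --- a convention issue you rightly flag, and one which in any case does not affect the dimension count since $G \rightarrow G/H$ is an $H$-torsor, so $\dim_{k}A^{H} = \text{ord}(G)/\text{ord}(H)$ under either convention. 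Both arguments are sound; the paper's is shorter given that the tannakian dictionary has already been set up, while yours is more self-contained at the level of torsors.
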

\proof
Let $F_{G}:\text{Rep}_{k}(G) \rightarrow \text{Rep}_{k}(\pi_{1}^{N}(X,x))$ be the functor coming from the morphism of group-schemes $\pi_{1}^{N}(X,x) \rightarrow G$, it corresponds in terms of essentially finite bundles to the functor $W \mapsto \left(W \otimes_{k} V_{T}\right)^{G}$ for $W \in  \text{Rep}_{k}(G)$. From this we see that $V_{T}$ is the image of $\rho_{G}$ via this functor, but from the factorization $ \text{Rep}_{k}(G) \rightarrow \text{Rep}_{k}(H) \rightarrow \text{Rep}_{k}(\pi_{1}^{N}(X,x))$ we see that $\tau \in \text{Rep}_{k}(H)$ maps to $V_{T}$ as well. \\
Now let $U \subset V_{T}$ be the maximal trivial sub-bundle, as $\pi_{1}^{N}(X,x) \rightarrow H$ is faithfully flat, using \cite[2.21 (a)]{DeligneMilneTann} we can find a trivial sub-representation of $\tau$ whose image in $\text{Rep}_{k}(\pi_{1}^{N}(X,x))$ is isomorphic to $U$. We can easily see that this the maximal sub-representation of $\tau$ with a trivial action and thus it corresponds to $A^{H}$, finishing the proof.
\endproof
\begin{remark} \label{remark::Essentially_finite_bundle_associated_to_a_quotient_of_a_torsor}
	Let $t^{\prime}:T/H \rightarrow X$ be the canonical morphism from the quotient of $T$ by the restricted $H$-action. Then we can easily see that $\left(t^{\prime}\right)_{*}(\Strsh{T/H})$ is a sub-bundle of $V_{T}$ that corresponds to the sub-representation $A^{H} \subset A$ of $H$. We will expand this further in Subsection \ref{subsubsec:Core_of_a_subgroup-scheme}.
\end{remark}
\subsection{Tower of Torsors and the FGS of a Nori-reduced torsor} \label{subsec:Towers_of_torsors_and_FGS_of_Nori-reduced_torsors}
In this section, we will outline all the recent results developed in \cite{AntBiswEmsToniZhang2017} that we will use in this paper. The general context of this article is to show the existence of ``Galois closures'' for a broader family of covers, containing the \'etale covers, over schemes, and more generally, over algebraic stacks over a field. Although the general results are stated for algebraic stacks and the ``fundamental gerbe'', a generalization of the fundamental group-scheme for fibered categories developed by N. Borne and A. Vistoli in \cite{BorneVistoliFundGerbe}, we will only use them in the context of $k$-schemes of finite type. \\
More specifically, we will work over a perfect field $k$, and all schemes considered will be of finite type, proper, reduced and connected with a rational point. To contextualize the hypotheses of the results we will state in this section, with those used in \cite{AntBiswEmsToniZhang2017} for the theory of the fundamental gerbe, ``inflexible'' means having a FGS and proper implies the ``pseudo-proper'' property, though it is not equivalent. The general notion of cover is the following:
\begin{defi} \label{defi:Essentially_finite_cover}
	Let $X$ be a $k$-scheme. An \textbf{essentially finite cover} is a $k$-scheme $Y$ together with finite and faithfully flat morphism $f:Y \rightarrow X$ such that $f_{*}(\Strsh{Y})$ is an essentially finite bundle over $X$. If $x \in X(k)$ and there is $y \in Y(k)$ such that $f(y)=x$ we will say in addition that the essentially finite cover is \textbf{pointed}
\end{defi}
Examples of essentially finite covers include \'etale covers, $G$-torsors $T \rightarrow X$ with $G$ finite and their quotients by the restricted action over any subgroup-scheme $T/H \rightarrow X$ where $H \subset G$. We will not directly use Galois closure of towers. For a definition and properties of Galois closures, see \cite[Theorem II]{AntBiswEmsToniZhang2017}.\\
The first result we will use is the following:
\begin{proposition}[Corollary I \cite{AntBiswEmsToniZhang2017}] \label{prop:FGS_of_Nori-reduced_torsors}
	Let $X$ be a connected, reduced and proper $k$-scheme of finite type that possesses a FGS with $x \in X(k)$. Then a pointed and finite $G$-torsor $T \rightarrow X$ possesses a FGS if and only if it is Nori-reduced and in that case, for $t \in T(k)$ over $x$, we have an exact sequence
	$$ 1 \rightarrow \pi_{1}^{N}(T,t) \rightarrow \pi_{1}^{N}(X,x) \rightarrow G \rightarrow 1 .$$
\end{proposition}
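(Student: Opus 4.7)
The plan is to establish both directions of the equivalence and simultaneously identify the kernel $N$ of the induced faithfully flat morphism $\Pi := \pi_1^N(X,x) \twoheadrightarrow G$ with $\pi_1^N(T,t)$.

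For the forward implication I would first argue that a pointed finite $G$-torsor $T \to X$ is Nori-reduced if and only if $T$ is connected. Indeed, by the discussion in Section \ref{subsec:Global_sections_ess_fin_bundles}, Nori-reducedness is equivalent to $\Gamma(X, t_{*}\Strsh{T}) = k$; combined with the identification $H^0(T, \Strsh{T}) = H^0(X, t_{*}\Strsh{T})$, this translates to connectedness of $T$. A disconnected $T$ cannot possess a universal pro-finite pointed torsor (the torsor structure over the components not containing the base point can be altered freely, violating unicity), so having a FGS forces $T$ to be Nori-reduced.

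For the reverse implication, assume $T \to X$ is pointed and Nori-reduced. By Remark \ref{remark:Nori_reduced_torsors_in_terms_of_the_FGS} the induced morphism $\Pi \to G$ is faithfully flat; denote its kernel by $N$. Since $T \cong \hat{X} \times^{\Pi} G \cong \hat{X}/N$ as pointed $G$-torsors, the universal morphism $\hat{X} \to T$ naturally makes $\hat{X}$ into a pointed pro-finite $N$-torsor over $T$. The claim is that this $N$-torsor is universal among pro-finite pointed torsors over $T$, which simultaneously shows that $T$ possesses a FGS and identifies it with $N$, yielding the exact sequence as stated.

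To prove universality, let $s:T' \to T$ be any pointed finite $H$-torsor. The composite $t \circ s : T' \to X$ is an essentially finite cover of $X$ (as a tower of torsors with $t_{*}\Strsh{T}$ essentially finite, and likewise $s_{*}\Strsh{T'}$ over $T$). Applying the Galois closure theorem of \cite{AntBiswEmsToniZhang2017} to this tower, one obtains a pointed finite torsor $\tilde{T} \to X$ together with a pointed morphism $\tilde{T} \to T'$ compatible with the projections to $X$. The universal property of $\hat{X}$ then yields a unique pointed morphism $\hat{X} \to \tilde{T}$, whose composition with $\tilde{T} \to T'$ is the required morphism $\hat{X} \to T'$ over $T$. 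Uniqueness follows from cofilteredness of the category of pointed torsors over $T$ together with the unicity built into the universal property of $\hat{X}$.

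The main obstacle is the construction of the pointed Galois closure $\tilde{T}$, since $T' \to X$ is only an essentially finite cover and typically not a torsor (composition of torsors with distinct structure groups generally fails to be a torsor). The results of \cite{AntBiswEmsToniZhang2017} are precisely designed to handle this: they produce pointed finite Galois closures for arbitrary towers of essentially finite covers, and once such a closure is invoked, both the existence of $\pi_1^N(T,t)$ and its identification with $N$ fall out directly, giving the short exact sequence $1 \to \pi_1^N(T,t) \to \pi_1^N(X,x) \to G \to 1$.
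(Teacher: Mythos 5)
First, a remark on the comparison itself: the paper does not prove this proposition — it is quoted as Corollary I of \cite{AntBiswEmsToniZhang2017} — so there is no internal proof to measure you against. Your reverse implication is sound, and it is in fact the same closure-of-towers technique the paper deploys in Chapter \ref{sec:FGS_of_pro-NR_torsors} (Lemma \ref{lemma:Closure_of_towers_of_torsors_for_pro-NR_torsors} and Proposition \ref{proposition:Pro-saturated_torsors_possess_a_FGS}) to extend this very statement to pro-NR torsors: given a pointed finite torsor $T^{\prime} \rightarrow T$, form the tower $T^{\prime} \rightarrow T \rightarrow X$, take its Nori-reduced envelope/closure $\tilde{T} \rightarrow X$, and route the universal arrow $\hat{X} \rightarrow \tilde{T}$ through $\tilde{T} \rightarrow T^{\prime}$; uniqueness then follows because $\Gamma(\hat{X},\Strsh{\hat{X}})=k$, so two pointed equivariant maps out of $\hat{X}$ differ by a constant section of the structure group equal to the identity at the marked point. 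Together with $T \cong \hat{X}/N$ this identifies $\pi_{1}^{N}(T,t)$ with $N=\ker(\pi_{1}^{N}(X,x) \rightarrow G)$ and gives the exact sequence.

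The forward implication, however, has a genuine gap. You reduce it to the claim that a pointed finite torsor is Nori-reduced if and only if $T$ is connected, and then dispose of the disconnected case. The correct criterion, recalled in Section \ref{subsec:Global_sections_ess_fin_bundles} from \cite[II Prop.~3]{NoriTFGS81}, is $\Gamma(T,\Strsh{T})=\Gamma(X,t_{*}\Strsh{T})=k$, and in positive characteristic this is strictly stronger than connectedness: if $G$ is infinitesimal (say $G=\alpha_{p}$ or $\mu_{p}$), then every $G$-torsor $T \rightarrow X$ is homeomorphic to $X$ and hence connected, whereas a non-Nori-reduced one has $\Gamma(T,\Strsh{T}) \supsetneq k$ (for the trivial torsor $X \times_{k} \alpha_{p}$ this ring is $k[u]/(u^{p})$; in general see Corollary \ref{coro:Global_sections_of_bundles_coming_from_torsors}). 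These connected, non-reduced, non-Nori-reduced torsors are precisely the cases your disconnectedness argument does not touch, and they are central to this paper, whose introduction stresses that finite Nori-reduced torsors need not even be reduced as schemes. What actually has to be shown is that $\Gamma(T,\Strsh{T}) \neq k$ obstructs the existence of a FGS in the sense of Definition \ref{defi:Having_a_fundamental_group-scheme}; this is the ``inflexibility'' criterion of the fundamental-gerbe formalism of \cite{BorneVistoliFundGerbe} and \cite{AntBiswEmsToniZhang2017}, and it does not follow from the argument you give.
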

For the second result we that need, we introduce the following definition:
\begin{definition}[Definition 3.8 \cite{AntBiswEmsToniZhang2017}] \label{definition:Envelopes_and_closure_of_a_tower_of_torsors}
	Let $X$ be a scheme over a field $k$ and let $Z \rightarrow Y \rightarrow X$ be a \textbf{tower of finite pointed torsors}. This means that $Z \rightarrow Y$ and $Y \rightarrow X$ are finite pointed torsors. If $G$ and $H$ are the finite group-schemes associated to $Z \rightarrow Y$ and $Y \rightarrow X$ respectively, we say that a $K$-torsor $U \rightarrow X$ is an \textbf{envelope} of or that it \textbf{envelops} the tower $Z \rightarrow Y \rightarrow X$ if we have morphisms of group-schemes $\alpha:K \rightarrow G$ and $\beta:\ker(\alpha) \rightarrow H$, and a morphism $U \rightarrow Z$ making the following diagram commutative
	\begin{equation} \label{equation:Closure_of_a_tower}
	\xymatrix{ & U \ar[rd] \ar[ld] \ar[d] & \\
		Z \ar[r] & Y  \ar[r]& X}
	\end{equation}
	so that we have a morphism of torsors $U \rightarrow Y$ over $T$ intertwining the respective group-scheme actions via $\alpha$, and a morphism of schemes $U \rightarrow Z$ over $W$ that intertwines the corresponding actions via $\beta$. An envelope is \textbf{Nori-reduced} if $U \rightarrow X$ is Nori-reduced. \\
	Finally, if an envelope $U$ is minimal in the sense that every other envelope $U^{\prime}$ possesses a canonical arrow $U^{\prime} \rightarrow U$ that is a morphism of torsors over $X$, we say that $U$ is the \textbf{closure} of the tower $Z \rightarrow Y \rightarrow X$. Closures are unique up to isomorphism.
\end{definition}
Now we can state the second result:
\begin{proposition}[Theorem III \cite{AntBiswEmsToniZhang2017}] \label{prop:Closure_of_towers_for_schemes_having_a_FGS}
	Let $X$ be a connected, reduced and proper $k$-scheme of finite type that possesses a FGS with $x \in X(k)$. Then any tower of torsors over $X$ possesses a Nori-reduced closure. Moreover, if $Z \rightarrow Y \rightarrow X$ is a tower of torsors over $X$ and $U \rightarrow X$ is its closure, we have that the morphism $U \rightarrow Z$ is faithfully flat if and only if both members of the tower are Nori-reduced over its respective bases, or they both posses a FGS in an equivalent fashion, and in that case $U$ is a Nori-reduced torsor over $Z$ and $Y$.
\end{proposition}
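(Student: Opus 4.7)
The plan is to construct the closure by Tannakian means, using that $X$ possesses a FGS, and then deduce the faithful-flatness criterion by translating the condition into surjectivity of arrows of finite group-schemes that come out of the construction.

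First, I would observe that the composition $p: Z \to X$ is finite and that $p_{*}(\Strsh{Z})$ is an essentially finite bundle on $X$, obtained by first computing $(Z\to Y)_{*}(\Strsh{Z})$ as the essentially finite bundle $V_{Z/Y}$ attached to the $G$-torsor $Z \to Y$ and then pushing it forward along the $H$-torsor $Y \to X$ (which preserves essential finiteness, as all the bundles involved are trivialized by the pullback to the composed cover and one can argue as in the proof of Lemma \ref{lemma:If_the_pull-back_of_a_bundle_is_ess_finite_then_it_is_ess_finite}). Let $K$ denote the finite quotient of $\pi_{1}^{N}(X,x)$ corresponding under tannakian duality to the smallest Tannakian full sub-category of $\text{EF}(X)$ containing $p_{*}(\Strsh{Z})$, and set $U := \hat{X} \times^{\pi_{1}^{N}(X,x)} K$. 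By construction $U$ is a Nori-reduced $K$-torsor over $X$.

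Next, I would read off the arrows in Definition \ref{definition:Envelopes_and_closure_of_a_tower_of_torsors}: the fiber $Z_{x}$ is a $G$-torsor over the point $Y_{x}$, itself an $H$-torsor over $\Spec(k)$, and each of these carries an action of $\pi_{1}^{N}(X,x)$ factoring through $K$ by construction; the resulting $K$-equivariant structure on $Z_{x}$ yields $\alpha:K\to G$ (from compatibility with the $G$-action on fibers of $Z \to Y$) and $\beta:\ker(\alpha)\to H$ (from the residual action of the kernel on $Y_{x}$). The morphism $U \to Z$ is then $U \times^{K} Z_{x}$, viewed as a scheme over $X$. Minimality is immediate: any other envelope $U' \to X$ corresponds to a quotient $\pi_{1}^{N}(X,x) \twoheadrightarrow K'$ whose representation category contains $p_{*}(\Strsh{Z})$, hence contains $\text{Rep}_{k}(K)$, producing the desired factorization $U' \to U$ of torsors.

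For the second half, the morphism $U \to Z$ is faithfully flat exactly when both $\alpha$ and $\beta$ are faithfully flat (i.e.\ surjective on finite group-schemes). By Remark \ref{remark:Nori_reduced_torsors_in_terms_of_the_FGS} applied on each level, $\alpha$ surjective means $Z \to Y$ is Nori-reduced and $\beta$ surjective means $Y\to X$ is Nori-reduced; and by Proposition \ref{prop:FGS_of_Nori-reduced_torsors} each of these equivalently amounts to the corresponding torsor possessing a FGS. Under this hypothesis, pulling back the universal torsor of $X$ along $Y \to X$ and $Z \to Y$ and combining with the exact sequences of Proposition \ref{prop:FGS_of_Nori-reduced_torsors} endows $U$ with the structure of a Nori-reduced $\ker(\alpha)$-torsor over $Y$ and of a Nori-reduced $\ker(\alpha)\cap\ker(\beta)$-torsor over $Z$.

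The main obstacle I anticipate is precisely the construction of the canonical map $U \to Z$ when $Z \to X$ is not itself a torsor, since $Z$ is then not directly classified as a quotient of $\pi_{1}^{N}(X,x)$ but only via its essentially finite cover structure; bookkeeping the two group-scheme actions, so that they assemble into a coherent $K$-equivariant structure when the intermediate torsors fail to be Nori-reduced, is the delicate point and is exactly what the more general closure/Galois closure machinery of \cite{AntBiswEmsToniZhang2017} is designed to handle.
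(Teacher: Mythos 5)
First, a caveat: the paper itself gives no proof of this statement---it is imported verbatim as Theorem III of \cite{AntBiswEmsToniZhang2017}---so your sketch has to be judged against what a complete argument would require. Your high-level strategy (take $U$ to be the Nori-reduced torsor attached to the smallest Tannakian subcategory of $\text{EF}(X)$ containing $p_{*}(\Strsh{Z})$, i.e.\ the Galois closure of the composite cover) is indeed the right one, and your treatment of the faithful-flatness criterion in the last part is essentially correct granted the construction. But the construction itself has two genuine gaps.

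First, the essential finiteness of $p_{*}(\Strsh{Z})$. You push $V_{Z/Y}$ forward along the $H$-torsor $Y\rightarrow X$ and claim this preserves essential finiteness because ``all the bundles involved are trivialized by the pullback to the composed cover,'' arguing ``as in Lemma~\ref{lemma:If_the_pull-back_of_a_bundle_is_ess_finite_then_it_is_ess_finite}.'' That lemma is about \emph{pullback} along a fibration with $f_{*}(\Strsh{X})=\Strsh{S}$, the opposite situation; and the criterion ``trivialized by a finite cover $\Rightarrow$ essentially finite'' is only available for $X$ normal (Remark~\ref{remark:Simplification_of_pull-back_of_an_essentially_finite_in_the_normal_case}), while Nori's classical criterion requires the trivializing cover to be a \emph{torsor}, which $Z\rightarrow X$ is not. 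Worse, when $Y\rightarrow X$ is not Nori-reduced, $Y$ may be disconnected and need not possess a FGS, so one cannot identify $\text{EF}(Y)$ with representations of a subgroup-scheme of $\pi_{1}^{N}(X,x)$ and realize the pushforward as induction. Establishing that $(Y\rightarrow X)_{*}$ carries essentially finite bundles to essentially finite bundles in this generality is precisely one of the main technical results of \cite{AntBiswEmsToniZhang2017}, not a formality. Second, the arrow $U\rightarrow Z$: writing it as $U\times^{K}Z_{x}$ presupposes the identification $Z\cong U\times^{K}Z_{x}$, i.e.\ the equivalence between essentially finite covers of $X$ and finite $\pi_{1}^{N}(X,x)$-schemes; you explicitly defer this to ``the closure/Galois closure machinery of \cite{AntBiswEmsToniZhang2017},'' which is circular, since that machinery \emph{is} the theorem being proved. (A smaller point: your assignment of $\alpha:K\rightarrow G$ to the fibres of $Z\rightarrow Y$ while sending only $\ker(\alpha)$ to $H$ is inverted---the morphism of torsors $U\rightarrow Y$ over $X$ must intertwine all of $K$ with the group of the bottom torsor, with the kernel acting on the top level---although Definition~\ref{definition:Envelopes_and_closure_of_a_tower_of_torsors} in the paper contains the same transposition.)
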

\begin{remark} \label{remark:About_towers_of_torsors}
	For a $k$-scheme $X$ satisfying the hypotheses of Proposition \ref{prop:Closure_of_towers_for_schemes_having_a_FGS}, the composition $Z \rightarrow X$ is an essentially finite cover and the closure of any tower coincides with the Galois closure of this cover, see \cite[Theorem 3.10]{AntBiswEmsToniZhang2017} for this result.
\end{remark}
\begin{remark} \label{remark:A_scheme_possessing_a_FGS_shares_its_universal_torsor_with_its_NR_torsors}
	Using Proposition \ref{prop:Closure_of_towers_for_schemes_having_a_FGS}, we can see that the universal torsor of a Nori-reduced torsor $T \rightarrow X$ is $\hat{X}$, the universal torsor of $X$.
\end{remark}
\section{FGS of pro-NR torsors} \label{sec:FGS_of_pro-NR_torsors}
\subsection{Notation for projective limits of torsors} \label{subsec:Notation_for_projective_limits_of_torsors}
In this chapter and later, we will work with projective limits of torsors over a $k$-scheme $X$, for the moment $k$ will be a general field. \\
We will fix notation for this first: if $\{T_{i}\}_{i \in I}$ is an inverse limit of finite torsors $T_{i} \rightarrow X$ over a partially ordered set $I$. The limit of this system will be denoted by $T := \displaystyle{\lim_{\leftarrow}} \, T_{i}$. We will also consider the associated inverse system of group-schemes $\{G_{i}\}_{i \in I}$, being $G_{i}$ the group-scheme associated to $T_{i}$. \\ 
Finally, for the pointed case, if $x \in X(k)$, the points $t_{i}$ and $t$ will denote respectively a rational point over $T_{i}$ and $T$ ($i \in I$), clearly $t$ is the inverse limit of the system formed by the $t_{i}$. When	 needed, we may add an index $0$ to the set $I$ such that $T_{0} := X$ and $t_{0} := x$.
\subsection{Pro-NR torsors} \label{subsec:Pro-NR_torsors}
We are interested in a particular type of limit of torsors:
\begin{definition} \label{definition:Pro-NR_torsor}
	Let $\{T_{i}\}_{i \in I}$ be a co-filtered family of finite pointed Nori-reduced torsors over a $k$-scheme $X$. We will call its projective limit $T:=\displaystyle{\lim_{\leftarrow}} \, T_{i}$ a \textbf{pro-NR torsor}. In the case we have compatible rational points over any member of the limit, according to our notation, we will say that this pro-NR torsor is \textbf{pointed}.
\end{definition}
We want to show that if $X$ is of finite type, proper, reduced and connected, any pro-NR torsor possesses a FGS with the same property in terms of its FGS as in the case of Nori-reduced torsors (Proposition \ref{prop:FGS_of_Nori-reduced_torsors}). For this, we need a lemma that allows us to handle torsors over the limit scheme $T$.
\begin{lemma} \label{lemma:Torsors_over_pro-NR_torsors_descent_to_a_finite_torsor}
	Let $T:=\displaystyle{\lim_{\leftarrow}} T_{i}$ be a projective limit of finite torsors over a scheme $X$ quasi-compact and quasi-separated over a field $k$. Let $V,W \rightarrow T$ be two finite torsors over $T$, where $G$ and $H$ are their corresponding group-schemes respectively. Then:
	\begin{enumerate}
		\item There exist an index $i \in I$ and a finite $G$-torsor $V_{i} \rightarrow T_{i}$ such that the following diagram is cartesian
		$$ \xymatrix{V \ar[d] \ar[r] & V_{i} \ar[d] \\
			T \ar[r] & T_{i} }.$$
		In addition, if $V$ is Nori-reduced, then so is $V_{i}$.
		\item If $\phi: V \rightarrow W$ is a morphism of torsors over $T$, there exists an index $i$ such that $V_{i}$ and $W_{i}$ are torsors over $T_{i}$ with a morphism of torsors $\phi_{i}: V_{i} \rightarrow W_{i}$ over $T_{i}$ such that $\phi$ is the pull-back of $\phi_{i}$ over $T$. In that case, $\phi$ is a isomorphism if and only if for an index $j \geq i$ the pull-back of $\phi_{i}$ to $T_{j}$ is an isomorphism.
	\end{enumerate}
\end{lemma}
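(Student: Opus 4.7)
The plan is to apply Grothendieck's limit theory for schemes (EGA IV, §8) to descend the torsors $V$ and $W$, together with the morphism $\phi$, from $T$ down to a finite stage $T_{i}$. The key point that makes the limit formalism applicable is that each $T_{i} \rightarrow X$ is finite and hence affine, so the transition maps $T_{j} \rightarrow T_{i}$ for $j \geq i$ are affine; thus $T = \lim_{\leftarrow} T_{i}$ is a cofiltered limit of quasi-compact quasi-separated schemes with affine transition maps, and the descent results EGA IV 8.8.2 (finitely presented objects and morphisms descend to some $T_{i}$) and EGA IV 8.10.5 (equalities and properties such as being an isomorphism are detected at a finite level) apply in our setting.

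For part (1), since $V \rightarrow T$ is finite it is finitely presented, so EGA IV 8.8.2 yields an index $i_{0}$ and a finitely presented $T_{i_{0}}$-scheme $V_{i_{0}}$ whose pull-back to $T$ recovers $V$. The $G$-action $G \times_{k} V \rightarrow V$ is a morphism of finitely presented $T$-schemes, so for some $i \geq i_{0}$ it descends to a morphism $G \times_{k} V_{i} \rightarrow V_{i}$ over $T_{i}$; the unit and associativity axioms, being equalities of descended morphisms, hold at some further index by EGA IV 8.10.5, so $V_{i}$ becomes a $G$-scheme over $T_{i}$. The torsor property is equivalent to the canonical morphism $G \times_{k} V \rightarrow V \times_{T} V$ being an isomorphism; this isomorphism descends, again by 8.10.5, to an isomorphism $G \times_{k} V_{i} \rightarrow V_{i} \times_{T_{i}} V_{i}$ for $i$ large enough. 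Faithful flatness and finiteness of $V_{i} \rightarrow T_{i}$ then follow automatically (faithful flatness because $T \rightarrow T_{i}$ is faithfully flat and its base change is faithfully flat, so descent gives flatness and surjectivity of $V_{i} \rightarrow T_{i}$; finiteness because the torsor property together with finiteness of $G$ forces it). For the Nori-reducedness assertion, if $V_{i}$ admitted a non-trivial pointed sub-torsor $V_{i}' \subsetneq V_{i}$ over $T_{i}$ with structure group a proper subgroup-scheme $G' \subsetneq G$, then the base change $V_{i}' \times_{T_{i}} T \subsetneq V$ would be a non-trivial pointed sub-torsor of $V$, contradicting the hypothesis; hence $V_{i}$ is Nori-reduced whenever $V$ is.

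For part (2), apply (1) to both $V$ and $W$ and choose a common index $i$ at which both $V_{i}, W_{i}$ exist as torsors over $T_{i}$. The morphism $\phi: V \rightarrow W$ is a morphism of finitely presented $T$-schemes, so by EGA IV 8.8.2 it descends, after enlarging $i$, to a morphism $\phi_{i}: V_{i} \rightarrow W_{i}$ of $T_{i}$-schemes whose pull-back to $T$ recovers $\phi$. The fact that $\phi_{i}$ intertwines the $G$- and $H$-actions via the fixed group-scheme morphism $G \rightarrow H$ over $k$ is again an equality of descended morphisms and holds at some larger index by 8.10.5, so $\phi_{i}$ is a morphism of torsors. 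Finally, the property of being an isomorphism is, for finitely presented morphisms of finitely presented $T_{i}$-schemes, detected at a finite level by EGA IV 8.10.5: if $\phi$ is an isomorphism then $\phi_{j}$ is an isomorphism for some $j \geq i$; conversely if $\phi_{j}$ is an isomorphism at some $j$, then $\phi$ is its pull-back and is an isomorphism as well.

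The proof is essentially a bookkeeping exercise with Grothendieck's limit formalism, so there is no deep obstacle; the one place where genuinely torsor-theoretic content enters is the descent of Nori-reducedness, which is handled by the simple pull-back argument above. The main care required is to track which pieces of structure (the scheme $V_{i}$, the action morphism, the action axioms, the torsor isomorphism, the morphism $\phi_{i}$, its compatibility with actions, and finally the isomorphism property) must be descended and in which order, successively enlarging the index $i$ until all required conditions are simultaneously satisfied.
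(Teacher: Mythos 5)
Your proposal is correct and follows essentially the same route as the paper: both descend the finite (hence finitely presented) scheme $V$ to some $T_{i}$ via EGA IV 8.8.2, descend the torsor structure (the action and the isomorphism $G\times_{k}V\cong V\times_{T}V$) and the morphism $\phi$ after enlarging the index, and detect the isomorphism property at a finite level, with the Nori-reducedness claim handled by the same pull-back-of-a-proper-sub-torsor argument. The only differences are cosmetic choices of which EGA IV \S 8 statements to cite.
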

We recall that a morphism of schemes $g:X \rightarrow S$ is \textbf{quasi-compact} if the inverse image of every affine open of $S$ is quasi-compact and we say that $X$ is \textbf{quasi-separated} over $S$ if the diagonal morphism $\Delta_{X/S}:X \rightarrow X \times_{S} X$ is quasi-compact.
\proof
The proof mainly relies on \cite[\S 8.8]{EGA4iii} where basic properties of schemes (locally of finite type) over a projective limit of schemes with affine transition maps, and morphisms between those are stated.\\
For part (1), if we apply \cite[8.8.2 (ii)]{EGA4iii} to $V$, which is fppf over $T$, there exists an index $i \in I$ and a fppf scheme $V_{i}$ over $T_{i}$ such that $V \cong V_{i} \times_{T_{i}} T$, moreover by taking a greater index if necessary the morphism $V_{i} \rightarrow T_{i}$ is finite \cite[8.5.5]{EGA4iii}. $V_{i}$ is not necessarily a torsor over $T_{i}$, but this construction is functorial, up to the choice of a bigger index, over $V$ and commutes with fibered products by \cite[8.8.2 (i)]{EGA4iii} (see \cite[\S 8.8.3]{EGA4iii} for more details). This means that if we take the projective system $\{G_{i}\}_{i \in I}$ and apply \cite[8.8.2.5]{EGA4iii} over the fact that there is an isomorphism $V \times_{T} G_{T} \cong V \times_{T} V$ means that there exists a big enough index $i$ such that we have an isomorphism $V_{i} \times_{T_{i}} G_{i} \cong V_{i} \times_{T_{i}} V_{i}$, making $V_{i} \rightarrow T_{i}$ a $G$-torsor. It is easy to see that $V_{i}$ is Nori-reduced if $V$ is. \\
For part (2), we first chose two indexes $i$ and $j$ such that $V$ descends to $V_{i}$ over $T_{i}$ and $W$ so does to $W_{j}$ over $T_{j}$. We can assume $i \leq j$ as we can take another bigger index to both $i$ and $j$, in that case we can take $V_{j}=V_{i} \times_{T_{i}} T_{j}$. Both schemes satisfy the hypotheses of Th\'eor\`eme 8.8.2 (i) of \cite{EGA4iii} and thus we have a bijection between $T$-morphisms between $V$ and $W$ and the directed limit of the sets $\text{Hom}_{T_{l}}(V_{l},W_{l})$ where $l \geq j$ and $V_{l}=V_{k} \times_{T_{j}} T_{l}$ and $W_{l}=W_{k} \times_{T_{j}} T_{l}$. This means that $\phi$ can be seen as a directed system of morphisms $\phi_{l}:V_{l} \rightarrow W_{l}$ and by picking a possibly bigger index we can assume that it is compatible with the actions of the respective group-schemes, as we can apply loc. cit. to get a bijection for $\text{Hom}_{T}(G_{T},H_{T})$ and $\text{Hom}_{T}(G_{T} \times_{T} V, H_{T} \times_{T} W)$ and its corresponding directed system of $\text{Hom}$-sets for each respective Hom-set. This bijection allows us to conclude that the commutative diagram
$$ \xymatrix{G_{T} \times V \ar[d] \ar[r] & H_{T} \times W \ar[d] \\
	V \ar[r]  & W}  $$ 
that compatibilizes the actions descends to a diagram that makes the actions compatible on a certain index, bigger that $j$, which means that we have a morphism of torsors over $T_{j}$. Finally, the isomorphism part of the second statement comes from \cite[8.8.2.5]{EGA4iii}.
\endproof
We remark that the results of \cite{EGA4iii} for projective limits of schemes, imply that a pro-NR torsors do not have non-trivial sub-torsors, making them Nori-reduced (Defintion \ref{definition:Nori-reduced_torsor}). \\
The main result of this section is the following, now $k$ will be a perfect field for the rest of this chapter:
\begin{proposition} \label{proposition:Pro-saturated_torsors_possess_a_FGS}
	Let $X$ be a reduced and proper scheme of finite type over a field $k$ with a rational point $x \in X(k)$. Let $\{T_{i}\}_{i \in I}$ be a projective system of pointed Nori-reduced torsors and $T$ its projective limit with $t \in T(k)$ over $x$. \\
	Then $T$ possesses a FGS, as it has the same universal torsor as $X$, and we have $\pi_{1}^{N}(T,t)=\ker\{\pi_{1}^{N}(X,x) \rightarrow \displaystyle{\lim_{\leftarrow}} G_{i} \}$ where $G_{i}$ is the finite group-scheme associated with to $T_{i}$.
\end{proposition}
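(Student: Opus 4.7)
The strategy is to show that $\hat{X}$, the universal torsor of $X$, also serves as the universal torsor of $T$, and that its structure group over $T$ is precisely $K := \ker\{\pi_{1}^{N}(X,x) \to \lim_{\leftarrow} G_{i}\}$.

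First I would construct the canonical pointed morphism $\hat{X} \to T$. Each $T_{i}$ is a pointed Nori-reduced torsor over $X$, so by Remark \ref{remark:A_scheme_possessing_a_FGS_shares_its_universal_torsor_with_its_NR_torsors} its universal torsor is $\hat{X}$ itself, giving a canonical pointed morphism $\hat{X} \to T_{i}$ over $X$. By uniqueness of morphisms of pointed torsors, these are compatible with the transition maps $T_{j} \to T_{i}$ for $j \geq i$, so they assemble into a single morphism $\hat{X} \to \lim_{\leftarrow} T_{i} = T$. Applying Proposition \ref{prop:FGS_of_Nori-reduced_torsors} level by level, $\hat{X} \to T_{i}$ is a pointed $\pi_{1}^{N}(T_{i}, t_{i})$-torsor with $\pi_{1}^{N}(T_{i}, t_{i}) = \ker\{\pi_{1}^{N}(X,x) \to G_{i}\}$. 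Passing to the cofiltered limit and using left-exactness of $\lim_{\leftarrow}$ on kernels gives $\lim_{\leftarrow} \pi_{1}^{N}(T_{i}, t_{i}) = K$, so $\hat{X} \to T$ is naturally a pro-finite $K$-torsor.

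For the universal property, let $V \to T$ be a pointed finite $G$-torsor. By Lemma \ref{lemma:Torsors_over_pro-NR_torsors_descent_to_a_finite_torsor}(1), $V$ descends to a pointed finite $G$-torsor $V_{i} \to T_{i}$ for some $i \in I$, so $V \cong V_{i} \times_{T_{i}} T$. Since $\hat{X}$ is the universal torsor of $T_{i}$, there is a unique pointed morphism $\hat{X} \to V_{i}$ over $T_{i}$. Its composition with $V_{i} \to T_{i}$ equals the map $\hat{X} \to T \to T_{i}$ already constructed, so the universal property of the fibered product yields a canonical pointed morphism $\hat{X} \to V$ over $T$; uniqueness follows because any such morphism is determined by its two projections to $V_{i}$ and to $T$. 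The case of general pro-finite torsors over $T$ then reduces to the finite one by cofiltered limits, completing the universal property and yielding $\pi_{1}^{N}(T,t) = K$.

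The main obstacle is step two: verifying that the compatible family of $\pi_{1}^{N}(T_{i}, t_{i})$-torsor structures on $\hat{X} \to T_{i}$ genuinely induces a $K$-torsor structure on the limit $\hat{X} \to T$. The cleanest approach uses the torsor axiom $\hat{X} \times K_{i} \cong \hat{X} \times_{T_{i}} \hat{X}$ for each $K_{i} := \pi_{1}^{N}(T_{i}, t_{i})$, which follows from Proposition \ref{prop:FGS_of_Nori-reduced_torsors}, and then invokes the commutation of affine fibered products with cofiltered inverse limits (as in the descent arguments of \S 8.8 of \cite{EGA4iii} used in Lemma \ref{lemma:Torsors_over_pro-NR_torsors_descent_to_a_finite_torsor}) to deduce $\hat{X} \times K \cong \hat{X} \times_{T} \hat{X}$.
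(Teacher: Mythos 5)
Your proposal is correct, but it takes a genuinely different route from the paper's. Both arguments share the same opening move: descend a finite pointed torsor over $T$ to some $V_{i}\rightarrow T_{i}$ via Lemma \ref{lemma:Torsors_over_pro-NR_torsors_descent_to_a_finite_torsor} and exploit the fact that $\hat{X}$ is the universal torsor of each $T_{i}$ (Proposition \ref{prop:FGS_of_Nori-reduced_torsors}). The paper, however, applies this only to \emph{Nori-reduced} torsors over $T$, concludes that the universal torsor $\hat{T}$ exists together with a morphism $\hat{X}\rightarrow\hat{T}$, and then must still show this morphism is an isomorphism; it does so by proving $\hat{T}$ has trivial FGS, which is where the closure-of-towers lemma for pro-NR torsors (Lemma \ref{lemma:Closure_of_towers_of_torsors_for_pro-NR_torsors}, itself a substantial Zorn's-lemma argument on envelopes) enters. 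You instead verify Definition \ref{defi:Having_a_fundamental_group-scheme} directly against \emph{all} pointed finite torsors over $T$, and identify the structure group as $K=\lim_{\leftarrow}K_{i}$ by passing the torsor axiom through the cofiltered limit; this sidesteps the comparison $\hat{X}\cong\hat{T}$ and hence the closure-of-towers machinery entirely, which is a real economy. The one step you should make explicit is uniqueness: reducing it to uniqueness of the projection $\hat{X}\rightarrow V_{i}$ over $T_{i}$ is slightly delicate, because a pointed torsor morphism $\hat{X}\rightarrow V$ over $T$ intertwines only the $K$-action, not a priori the $K_{i}$-action, so one cannot immediately quote the universal property of $\hat{X}$ over $T_{i}$. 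The clean fix is to argue directly over $T$: two pointed $T$-morphisms $\phi,\psi:\hat{X}\rightarrow V$ into the finite $G$-torsor $V$ differ by a map $\delta:\hat{X}\rightarrow G$ (via $V\times_{T}V\cong V\times G$), which is constant since $\Gamma(\hat{X},\Strsh{\hat{X}})=k$, and equals the identity because $\delta$ sends the marked point to $e$. With that patch your argument is complete and, in fact, shorter than the paper's.
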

To show this, we need an extension of Proposition \ref{prop:Closure_of_towers_for_schemes_having_a_FGS}
\begin{lemma} \label{lemma:Closure_of_towers_of_torsors_for_pro-NR_torsors}
	Let $X$ be a proper, reduced and connected scheme of finite type over a field $k$ with a rational point $x \in X(k)$ and let $T$ be a pro-NR torsor with $t \in T(k)$ over $x$.
	Then, if $W \rightarrow V \rightarrow T$ is a tower of finite pointed torsors, there exists a Nori-reduced closure $U \rightarrow T$ of the tower that mimics the properties of the closure of a tower of finite torsors over $X$, outlined in Definition \ref{definition:Envelopes_and_closure_of_a_tower_of_torsors}.
\end{lemma}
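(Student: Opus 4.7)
The plan is to reduce the construction to a finite level of the pro-NR system and invoke the existing closure theorem there. By two successive applications of Lemma~\ref{lemma:Torsors_over_pro-NR_torsors_descent_to_a_finite_torsor}(1) (enlarging the index if necessary), one obtains $i \in I$ and a tower $W_{i} \to V_{i} \to T_{i}$ of finite pointed torsors such that $W \to V \to T$ is its base change along $T \to T_{i}$. Since $T_{i}$ is a Nori-reduced finite torsor over $X$, Proposition~\ref{prop:FGS_of_Nori-reduced_torsors} tells us that $T_{i}$ possesses a FGS, and therefore Proposition~\ref{prop:Closure_of_towers_for_schemes_having_a_FGS} provides a Nori-reduced closure $U_{i} \to T_{i}$ of the tower $W_{i} \to V_{i} \to T_{i}$, with finite associated group-scheme $K_{i}$.

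For every $j \geq i$, applying Proposition~\ref{prop:Closure_of_towers_for_schemes_having_a_FGS} at $T_{j}$ yields a Nori-reduced closure $U_{j} \to T_{j}$ of the base-changed tower $W_{j} \to V_{j} \to T_{j}$, with group $K_{j}$. The base change $U_{i} \times_{T_{i}} T_{j}$ is itself an envelope of this tower, so by the closure property of $U_{j}$ there is a canonical morphism $U_{i} \times_{T_{i}} T_{j} \to U_{j}$ of pointed torsors; since $U_{j}$ is Nori-reduced, Definition~\ref{definition:Nori-reduced_torsor} forces this morphism to be faithfully flat, whence $K_{j}$ is a faithfully flat quotient of $K_{i}$. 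Iterating the same observation for pairs $j \geq j' \geq i$ produces a chain of faithfully flat quotients between finite group-schemes whose orders are non-increasing in $j$, so the chain stabilizes: there exists $j_{0} \geq i$ such that $K_{j} \cong K_{j_{0}}$ and $U_{j_{0}} \times_{T_{j_{0}}} T_{j} \cong U_{j}$ for all $j \geq j_{0}$.

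I would then define $U := U_{j_{0}} \times_{T_{j_{0}}} T$, a finite pointed $K_{j_{0}}$-torsor over $T$ inheriting its envelope data by base change, and in particular enveloping $W \to V \to T$. To see that $U$ is Nori-reduced over $T$, suppose $U' \subsetneq U$ were a proper pointed sub-torsor; by Lemma~\ref{lemma:Torsors_over_pro-NR_torsors_descent_to_a_finite_torsor}, both $U'$ and its inclusion into $U$ would descend to a proper pointed sub-torsor of $U_{j_{0}} \times_{T_{j_{0}}} T_{j} = U_{j}$ for some $j \geq j_{0}$, contradicting the Nori-reducedness of $U_{j}$. For the universal property, given any envelope $U'' \to T$ of $W \to V \to T$, Lemma~\ref{lemma:Torsors_over_pro-NR_torsors_descent_to_a_finite_torsor} allows us to descend $U''$ to an envelope $U''_{l} \to T_{l}$ of $W_{l} \to V_{l} \to T_{l}$ for some $l \geq j_{0}$; the closure property of $U_{l} = U_{j_{0}} \times_{T_{j_{0}}} T_{l}$ then provides a canonical morphism $U''_{l} \to U_{l}$, which pulls back to the required canonical morphism $U'' \to U$.

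The main obstacle is the stabilization step in the second paragraph: the Nori-reduced closures at individual finite levels $T_{j}$ a priori vary with $j$, and the naive pull-back $U_{i} \times_{T_{i}} T$ need not be Nori-reduced over $T$. The whole argument hinges on identifying a cofinal index $j_{0}$ beyond which the finite-level closures become rigid under base change, which is precisely where the finiteness of $K_{i}$ is essential; once this is in place the envelope structure, the Nori-reducedness, and the universal property of $U$ all follow by routine descent through Lemma~\ref{lemma:Torsors_over_pro-NR_torsors_descent_to_a_finite_torsor}.
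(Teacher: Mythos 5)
Your proof is correct, and it organizes the argument differently from the paper. Both proofs begin identically: descend the tower to a finite level $T_{i}$ via Lemma \ref{lemma:Torsors_over_pro-NR_torsors_descent_to_a_finite_torsor} and invoke Proposition \ref{prop:Closure_of_towers_for_schemes_having_a_FGS} there. But where you then track the whole cofinal system of finite-level closures $\{U_{j}\}_{j\geq i}$ and prove they rigidify under base change past some $j_{0}$ (using that the orders of the associated group-schemes form a non-increasing net of positive integers, so that the canonical faithfully flat comparison maps $U_{j'}\times_{T_{j'}}T_{j}\to U_{j}$ eventually become isomorphisms), the paper instead works directly with the poset of Nori-reduced envelopes of the tower \emph{over $T$ itself}: it first replaces the pulled-back envelope by its maximal Nori-reduced sub-torsor, then applies Zorn's lemma after checking that chains of Nori-reduced envelopes are finite and that the poset is directed (the directedness check being essentially your comparison-of-finite-level-closures observation in disguise). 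The paper also needs a separate reduction at the end for towers whose members are not Nori-reduced, whereas your construction treats arbitrary towers uniformly, since Proposition \ref{prop:Closure_of_towers_for_schemes_having_a_FGS} already produces Nori-reduced closures of arbitrary towers at each finite level. What your route buys is a concrete identification of the closure as the pull-back of a single stabilized finite-level closure, with Nori-reducedness and the universal property then following by routine descent; what the paper's route buys is that it never has to argue that the finite-level closures themselves stabilize, at the cost of an abstract maximality argument. Your stabilization step is sound as written, though you should note explicitly that the index set is only directed rather than totally ordered, so ``the chain stabilizes'' should be read as: the minimum of the orders is attained at some $j_{0}$, and monotonicity along the directed order then forces all comparison maps above $j_{0}$ to be isomorphisms on the cofinal set $\{j\geq j_{0}\}$, which is all the construction requires.
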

\proof
Let us start with the existence of an envelope for the tower: Let $W \rightarrow V \rightarrow T$ be a tower of torsors. Applying Lemma \ref{lemma:Torsors_over_pro-NR_torsors_descent_to_a_finite_torsor} to $V$ to obtain a cartesian square
$$ \xymatrix{ V \ar[d] \ar[r] & V_{i} \ar[d] \\
	T \ar[r] & T_{i} } $$
where $i \in I$ and $V_{i} \rightarrow T_{i}$ is a torsor. Applying \cite[8.8.2 (ii)]{EGA4iii} to the composition $W \rightarrow V \rightarrow T$ we also get the following diagram in which all possible squares are cartesian:
$$\xymatrix{W \ar[r] \ar[d] & W_{i} \ar[d]  \\
	V \ar[d] \ar[r] & V_{i} \ar[d] \\
	T \ar[r] & T_{i} }$$
but it is not necessarily clear if $W_{i} \rightarrow V_{i}$ is a torsor. This is indeed the case (over a possibly larger index) as we can consider the projective system $\{V_{j} \}_{j \geq i}$ with base $V_{i}$ and projective limit $V$ (see \cite[8.2.5]{EGA4iii}) and apply Lemma \ref{lemma:Torsors_over_pro-NR_torsors_descent_to_a_finite_torsor} for $W \rightarrow V$. \\
Now we can utilize Proposition \ref{prop:Closure_of_towers_for_schemes_having_a_FGS} over the tower of finite pointed torsors $W_{i} \rightarrow V_{i} \rightarrow T_{i}$ as all schemes are of finite type and $T_{i}$ possesses a FGS (Proposition \ref{prop:FGS_of_Nori-reduced_torsors}), and thus we obtain a closure $U_{i}$ of this tower, giving us the commutative diagram
$$ \xymatrix{W \ar[r] \ar[d] & W_{i} \ar[d] & \\
	V \ar[d] \ar[r] & V_{i} \ar[d] &  U_{i} \ar[lu] \ar[l] \ar[ld] \\
	T \ar[r] & T_{i}& } $$
and if we denote by $U$ the pull-back of $U_{i}$ to $T$, we obtain  the following diagram
$$ \xymatrix{& W \ar[r] \ar[d] & W_{i} \ar[d] & \\
	U \ar[ru] \ar[r] \ar[rd] &V \ar[d] \ar[r] & V_{i} \ar[d] &  U_{i} \ar[lu] \ar[l] \ar[ld] \\
	&T \ar[r] & T_{i}& } $$
where we see that the left-hand side of the diagram shows the existence of a torsor that envelopes the tower $W \rightarrow V \rightarrow T$ as $U_{i}$ satisfies the properties that we require for an envelope of a tower of torsors, but $U$ might not be a minimal torsor that envelops the tower. This process shows how to obtain an envelope for the tower $W \rightarrow V \rightarrow T$ from the closure of a tower of torsors over the finite torsor $T_{i}\rightarrow X$. \\
Now let us assume that both $W \rightarrow V$ and $V \rightarrow T$ are Nori-reduced and let us show the existence of a closure in this case. First, we notice that as $U_{i}$ is the closure of a tower of Nori-reduced torsor, we have that $U_{i} \rightarrow T_{i}$ is Nori-reduced but its pull-back $U \rightarrow T$ might not be. At least, both arrows $U \rightarrow V$ and $U \rightarrow W$ are torsors and then, if we take the maximal Nori-reduced sub-torsor $\bar{U} \hookrightarrow U$, we obtain a Nori-reduced envelope that additionally is a torsor over $W$, so we can suppose from now on that $U$ satisfies these properties. \\
The existence of a unique Nori-reduced closure in the case both torsors in the tower $W \rightarrow V \rightarrow T$ are Nori-reduced comes from applying Zorn's lemma to the (non-empty) set of isomorphism classes of Nori-reduced envelopes of this tower, i.e., we are considering the skeletal sub-category of the category of envelopes with morphisms of torsors as arrows. We will abuse notation when considering classes and individual torsors. \\
We consider over the classes, the partial ordering $U \leq U^{\prime}$ iff there exists a morphism of torsors $U \rightarrow U^{\prime}$ of over $T$, which holds as we have the following properties:
\begin{itemize}
	\item[i)] Every chain of Nori-reduced envelopes has at most a finite amount of members.
	\item[ii)] The poset of Nori-reduced envelopes is directed. 
\end{itemize}
For i), if $U \leq U^{\prime}$, let $G$ be the group-scheme associated to $U \rightarrow T$. Then, as both torsors are Nori-reduced over $T$, $U^{\prime}$ is of the form $U/N$ where $N$ is a normal subgroup of $G$. As there is only a finite amount of quotients of $G$, there is a finite amount of envelopes greater than $U$. \\ 
To prove ii), let $U,U^{\prime}$ be two Nori-reduced envelopes of the tower $W \rightarrow V \rightarrow T$ and let $l \in I$ be an index such that both torsors $U$ and $U^{\prime}$ descend to torsors $U_{l}$ and $U_{l}^{\prime}$ respectively, this is possible as $I$ is directed. In that case, these torsors envelop the tower $W_{l} \rightarrow V_{l} \rightarrow T_{l}$ that descends from the tower over $T$ and thus if $Z_{l}$ is the closure of this tower, we have morphisms of torsors $U_{k},U_{k}^{\prime} \rightarrow Z$ which are quotients in this case. This implies that the pull-back $Z$ of $Z_{k}$ over $T$ is a Nori-reduced common quotient of $U$ and $U^{\prime}$ that envelops the tower $W \rightarrow V \rightarrow T$, showing that the poset of Nori-reduced envelopes of this tower is directed. \\
For the general case, we first note that as long as $V \rightarrow T$ is Nori-reduced, we will have a closure for the tower even if $W \rightarrow V$ is not Nori-reduced from the last paragraph, but of course, this closure might not longer be a torsor over $W$. This ties into the general case, because if $V \rightarrow T$ is no longer Nori-reduced, any Nori-reduced envelope $U$ of the tower goes faithfully flat over $\bar{V} \subset V$, the canonical Nori-reduced sub-torsor of $V$, and then, if $U$ envelopes $W \rightarrow V \rightarrow T$ then it will envelope the tower $W \times_{V} \bar{V} \rightarrow \bar{V} \rightarrow T$, such that we have a commutative diagram
$$ \xymatrix{&W \times_{V} \bar{V} \ar[d] \ar[r] & W \ar[d] \\
	U \ar[ur] \ar[r] \ar[drr] &\bar{V} \ar[r] \ar[dr] & V \ar[d] \\
	& & T} $$
and then we conclude that we have a closure for the initial given tower in this case as well, finishing the proof.
\endproof
\proof[Proof of Proposition 2.3]
Let $\hat{X}$ be the universal torsor of $X$ and let $Y$ be a finite Nori-reduced pointed torsor over $T$, applying Lemma \ref{lemma:Torsors_over_pro-NR_torsors_descent_to_a_finite_torsor}, there exist an index $i$ and a pointed torsor $Y_{i} \rightarrow T_{i}$ making a cartesian diagram
$$ \xymatrix{Y \ar[d] \ar[r] & Y_{i} \ar[d] \\
	T \ar[r] & T_{i} }. $$
As $T_{i}$ is Nori-reduced, from Proposition \ref{prop:FGS_of_Nori-reduced_torsors}, $\hat{X}$ is also the universal torsor of $T_{i}$ and thus we have an arrow $\hat{X} \rightarrow Y_{i}$. Combining this arrow with the canonical arrow $\hat{X} \rightarrow T$ we obtain an arrow $\hat{X} \rightarrow Y \cong Y_{i} \times_{T_{i}} T$ and thus we conclude that all Nori-reduced torsors over $T$ are a quotient of $\hat{X}$, this means that $T$ possesses a universal pointed torsor $\hat{T}$ and we have a natural morphism of torsors over $T$, $\hat{X} \rightarrow \hat{T}$. \\
To get an isomorphism, it suffices to prove that $\hat{T}$ has a trivial fundamental group-scheme: Let $Z \rightarrow \hat{T}$ be a finite Nori-reduced torsor over $\hat{T}$, applying Lemma \ref{lemma:Torsors_over_pro-NR_torsors_descent_to_a_finite_torsor} over $T$, which is quasi-compact and quasi-separated over $k$ because $T$ is affine over $k$, there exist a finite Nori-reduced torsor $V \rightarrow T$ and a finite Nori-reduced torsor $W \rightarrow V$ fitting into a commutative diagram where all possible squares are cartesian:
$$\xymatrix{Z \ar[r] \ar[d] & W \ar[d]  \\
	\hat{T} \ar[dr] \ar[r] & V \ar[d] \\
	& T }.$$
From Lemma \ref{lemma:Closure_of_towers_of_torsors_for_pro-NR_torsors}, there exists a Nori-reduced closure $U \rightarrow T$ of the tower $W \rightarrow V \rightarrow T$, and thus we have a canonical arrow $\hat{T} \rightarrow U$ which composed with the arrow $U \rightarrow W$ gives a section $\hat{T} \rightarrow Z=W \times_{V} \hat{T}$, making $Z$ a trivial torsor and finishing the proof. 
\endproof
A particular case of the fact that the universal torsor $\hat{T}$ does not possess non-trivial finite Nori-reduced torsors coming from the proof Proposition \ref{proposition:Pro-saturated_torsors_possess_a_FGS} is the following:
\begin{corollary} \label{corollary:The_universal_torsor_has_a_trivial_fundamental_gruop}
	Let $X$ be a proper, reduced and connected scheme of finite type over a field $k$ with a rational point $x \in X(k)$. Then, if $\hat{X}$ is the universal torsor of $X$, all finite Nori-reduced torsors over $\hat{X}$ are trivial.
\end{corollary}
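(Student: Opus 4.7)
The plan is to derive this corollary essentially for free by specializing Proposition \ref{proposition:Pro-saturated_torsors_possess_a_FGS} to the case $T = \hat{X}$. The key conceptual observation to make first is that the universal torsor $\hat{X}$ of $X$ is itself a pro-NR torsor over $X$: by construction, $\hat{X}$ is the projective limit of the cofiltered system formed by all finite pointed Nori-reduced torsors $X_i \to X$, with limit group-scheme $\pi_1^N(X,x) = \varprojlim G_i$. This places $\hat{X}$ squarely in the framework of Definition \ref{definition:Pro-NR_torsor}, so the results of Section \ref{subsec:Pro-NR_torsors} apply to it directly.

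With that observation in hand, I would apply Proposition \ref{proposition:Pro-saturated_torsors_possess_a_FGS} to the pro-NR torsor $T := \hat{X}$, pointed by the distinguished rational point lifting $x$. The proposition states that $T$ possesses a fundamental group-scheme and that its universal torsor agrees with $\hat{X}$, the universal torsor of $X$. But in our case $T = \hat{X}$, so the universal torsor of $\hat{X}$ is $\hat{X}$ itself, i.e., the structural arrow $\hat{X} \to \hat{X}$ is the universal torsor. Equivalently, writing $\pi_1^N(\hat{X}) = \ker\{\pi_1^N(X,x) \to \varprojlim G_i\}$ as given by the proposition, this kernel is trivial since the map to the limit is by definition an isomorphism. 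Hence $\hat{X}$ has trivial fundamental group-scheme, which, by the universal property in Definition \ref{defi:Having_a_fundamental_group-scheme} (or equivalently Remark \ref{remark:Nori_reduced_torsors_in_terms_of_the_FGS}), means every finite pointed Nori-reduced torsor over $\hat{X}$ is trivial.

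There is no real obstacle here since all the work was done inside the proof of Proposition \ref{proposition:Pro-saturated_torsors_possess_a_FGS}; the only thing I would double-check is the harmless bookkeeping that $\hat{X}$ honestly fits Definition \ref{definition:Pro-NR_torsor} as a pointed pro-NR torsor (a compatible family of lifts of $x$ exists because each $X_i \to X$ is pointed, and these compatible points assemble into a rational point of the limit), so that Proposition \ref{proposition:Pro-saturated_torsors_possess_a_FGS} genuinely applies.
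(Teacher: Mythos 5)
Your proof is correct, and it is a genuinely different (and shorter) route than the one the paper writes down. You derive the corollary as a formal specialization of Proposition \ref{proposition:Pro-saturated_torsors_possess_a_FGS} to $T=\hat{X}$, using that $\hat{X}$ is itself the pointed pro-NR torsor obtained as the cofiltered limit of all finite pointed Nori-reduced torsors over $X$, so that the kernel formula gives $\pi_{1}^{N}(\hat{X})=\ker\{\pi_{1}^{N}(X,x)\rightarrow\varprojlim G_{i}\}=1$ and Remark \ref{remark:Nori_reduced_torsors_in_terms_of_the_FGS} finishes the job; there is no circularity, since the proof of that proposition nowhere uses the corollary, and the paper itself gestures at exactly this reading when it introduces the corollary as ``a particular case of the fact \ldots coming from the proof of Proposition \ref{proposition:Pro-saturated_torsors_possess_a_FGS}.'' The paper's written proof instead re-runs the underlying argument directly: it descends a finite Nori-reduced torsor $Z\rightarrow\hat{X}$ to a tower $W\rightarrow V\rightarrow X$ over $X$ via Lemma \ref{lemma:Torsors_over_pro-NR_torsors_descent_to_a_finite_torsor}, takes the closure of that tower over $X$ using Proposition \ref{prop:Closure_of_towers_for_schemes_having_a_FGS} (which only needs $X$ to possess a FGS, not the pro-NR closure Lemma \ref{lemma:Closure_of_towers_of_torsors_for_pro-NR_torsors}), and produces an explicit section $\hat{X}\rightarrow Z$. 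What your approach buys is brevity and the avoidance of repetition; what the paper's approach buys is a self-contained argument over the base $X$ that exhibits the trivializing section concretely and does not require verifying the (harmless but nontrivial) bookkeeping you correctly flag, namely that the finite pointed Nori-reduced torsors form a cofiltered system with compatible marked points whose limit is $\hat{X}$, so that Definition \ref{definition:Pro-NR_torsor} and hence Proposition \ref{proposition:Pro-saturated_torsors_possess_a_FGS} genuinely apply to $\hat{X}$.
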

\proof
If $Z \rightarrow \hat{X}$ is a finite Nori-reduced torsor, we can apply Lemma \ref{lemma:Torsors_over_pro-NR_torsors_descent_to_a_finite_torsor} to a commutative diagram
$$ \xymatrix{Z \ar[r] \ar[d] & W \ar[d]  \\
	\hat{X} \ar[dr] \ar[r] & V \ar[d] \\
	& X } $$
where $W \rightarrow V \rightarrow X$ is a tower of torsors. From here, using closure of towers directly we can argue analogously as we did in the proof of Proposition \ref{proposition:Pro-saturated_torsors_possess_a_FGS} to obtain a section $\bar{X} \rightarrow Z$.
\endproof
From Proposition \ref{proposition:Pro-saturated_torsors_possess_a_FGS}, we get the following short exact sequence for a pro-NR torsor
\begin{equation} \label{equation:Short_exact_sequence_pro-NR_torsors}
1 \rightarrow \pi_{1}^{N}(T,t) \rightarrow \pi_{1}^{N}(X,x) \rightarrow \lim_{\leftarrow} G_{i} \rightarrow 1
\end{equation}
for suitable rational points. Also, from Corollary \ref{corollary:The_universal_torsor_has_a_trivial_fundamental_gruop}, the universal torsor $\hat{X}$ of $X$ has a trivial FGS.
\section{Pull-back of torsors to the geometric generic fiber} \label{sec:Pull-back_to_the_geometric_generic_fiber}
We start by introducing some terminology:
\begin{definition} \label{definition:Pure_pull-back_and_mixed_torsor}
	Let $T$ be a $G$-torsor over $X$. $T$ is a \textbf{pull-back torsor} if there exists a torsor $S^{\prime} \rightarrow S$ such that its pull-back along $f$, $S^{\prime} \times_{S} X \rightarrow X$ is $T$, trivial torsors are considered pull-back torsors.\\ 
	We say that a non-trivial torsor $T$ is \textbf{pure} with respect to $S$ if $T$ and none of its non-trivial quotients is the pull-back of a torsor over $S$. \\
	A torsor which is neither a pull-back nor a pure torsor is called a \textbf{mixed torsor}.
\end{definition}
The purpose of this chapter, is to prove that for a morphism of proper reduced and connected $k$-schemes $f:X \rightarrow S$ with $S$ integral and $k$ algebraically closed, the pull-back $T_{\bar{\eta}} \rightarrow X_{\bar{\eta}}$ of a pure and Nori-reduced $G$-torsor $T \rightarrow X$ is Nori-reduced over the geometric generic fiber, where $\eta$ is the generic point of $S$. For this, we need two technical results involving finite group-schemes that we will outline in the next two subsections.
\subsection{Some group-scheme results} \label{subsec:Group-scheme_results}
We recall that for a scheme $X$ of finite type over $k$, $\tilde{X}:\text{Alg}_{k}^{0} \rightarrow \text{Set}$ will denote its functor of points.
\subsubsection{Well-corresponded subgroup-schemes} \label{subsubsec:Well_corresponded_subgroup-schemes}
This section is mostly independent from the rest, we will consider an extension $k \subset L$ of algebraically closed fields. For a finite group-scheme $G$, we will denote $G_{L}$ its base change to $L$. All algebras considered will be finite over their respective base field.
\begin{defi} \label{defi:Good_correspondence_of_subgroups}
	Let $G$ be a finite $k$-group-scheme and $G_{L}$ its base change to $L$. We say that a subgroup-scheme $H \subset G_{L}$ is \textbf{well-corresponded} if there exists a subgroup-scheme $H^{\prime} \subset G$ such that $H_{L}^{\prime} \cong H$.
\end{defi}
\begin{remark} \label{remark:There_is_at_most_one_subgroup_scheme_which_is_corresponded}
	If $H$ is well-corresponded, there is a unique subgroup-scheme of up to isomorphisms of $G$ whose base change to $L$ is $H$.
\end{remark}
To prove that all subgroup-schemes of a finite group-scheme $G$ are well-corresponded, we will use the local-\'etale exact sequence
$$ 1 \rightarrow G^{0} \rightarrow G \rightarrow G^{\text{\'et}} \rightarrow 1 $$
which is split in our case as $k$ is perfect, this means that we actually have an isomorphism $G \cong G^{0} \rtimes G^{\text{\'et}}$ coming from the retraction $G_{\text{red}} \cong G^{\text{\'et}}$. The exact sequence and the splitting commute with extensions of the base field (see \cite[Prop. 2.37]{MilneAG2017}), in particular $\left(G^{0}\right)_{L}=G_{L}^{0}$ and $\left(G^{\text{\'et}}\right)_{L}=G_{L}^{\text{\'et}}$ with $G_{L} \cong G_{L}^{0} \rtimes G_{L}^{\text{\'et}} $. This exact sequence is unique in the sense that, if we have another short exact sequence
$$1 \rightarrow N \rightarrow G \rightarrow G/N \rightarrow 1$$
with $N$ local and $G/N$ \'etale, we have isomorphisms $N \cong G^{0}$ and $G/N \cong G^{\text{\'et}}$ that carry one exact sequence to the other. \\
As \'etale and local group-schemes are the building blocks of the exact sequence, we will prove well-correspondence for subgroup-schemes of \'etale and local group-schemes separately.
\begin{proposition} \label{prop:Well_correspondence_for_etale_and_local_finite_group_schemes}
	Let $G$ be a finite group-scheme.
	\begin{itemize}
		\item If $G$ is \'etale, all subgroup-schemes of $G_{L}$ are well-corresponded.
		\item If $G$ is local, all subgroup-schemes of $G_{L}$ are well-corresponded.
	\end{itemize}
\end{proposition}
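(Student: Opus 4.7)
The plan is to treat the two cases by entirely different methods.

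For the étale case, since $k$ is algebraically closed, $G$ is the constant group-scheme $\underline{\Gamma}$ associated to the finite group $\Gamma := G(k)$, and its base change $G_L$ is likewise the constant group-scheme on $L$ with the same underlying group $\Gamma$. Any closed subgroup-scheme $H \subset G_L$ is then automatically constant, of the form $\underline{\Lambda}_L$ for a subgroup $\Lambda \subset \Gamma$, and the constant subgroup-scheme $H' := \underline{\Lambda} \subset G$ satisfies $H'_L \cong H$. This disposes of the étale case.

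For the local case, write $G = \Spec(A)$ with $A$ a finite-dimensional local Hopf algebra over $k$, and let $H \subset G_L$ correspond to a Hopf ideal $I \subset A \otimes_k L$. Since $I$ is an $L$-subspace of the finite-dimensional $L$-vector space $A \otimes_k L$ satisfying finitely many polynomial constraints (those encoding the Hopf-ideal property), one may pick generators of $I$ defined over a finitely generated $k$-subalgebra $R \subset L$ and obtain a Hopf ideal $\mathcal{I} \subset A \otimes_k R$ with $\mathcal{I} \otimes_R L = I$. Passing to the quotient yields a flat family of subgroup-schemes $\mathcal{H} := \Spec((A \otimes_k R)/\mathcal{I}) \subset G \times_k \Spec(R)$ over the integral affine $k$-variety $\Spec(R)$, whose pull-back along $R \hookrightarrow L$ recovers $H$.

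To conclude, one invokes generic constancy of the geometric isomorphism class of the fibers of $\mathcal{H} \to \Spec(R)$. The geometric isomorphism type of $\mathcal{H}_v$ is a constructible function on $\Spec(R)$: isomorphism classes of Hopf-algebra structures of a given finite dimension arise as orbits under the action of a linear algebraic group on the affine variety of structure constants, and orbits under algebraic-group actions are locally closed. Hence there is a non-empty Zariski open $U \subset \Spec(R)$ on which all geometric fibers of $\mathcal{H}$ are isomorphic to the geometric generic fiber. Since $k$ is algebraically closed and $\Spec(R)$ is a $k$-variety, $U(k) \neq \emptyset$; for any $\mathfrak{m} \in U(k)$ the fiber $H' := \mathcal{H}_\mathfrak{m}$ is a closed subgroup-scheme of $G$, and base-changing back to $L$ yields $H' \otimes_k L \cong H$, using that both $k$ and the algebraic closure of $\mathrm{Frac}(R)$ embed into the algebraically closed field $L$. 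Thus $H$ is well-corresponded.

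The principal technical obstacle is establishing the constructibility of geometric isomorphism classes in such a flat family of finite group-schemes, on which the specialization to a $k$-point depends; the étale case, by contrast, requires no more than the equivalence between finite étale group-schemes over algebraically closed fields and finite abstract groups, and so presents no difficulty.
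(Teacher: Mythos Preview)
Your \'etale argument coincides with the paper's. For the local case you take a genuinely different route. The paper invokes the structure theorem for finite local Hopf algebras over a perfect field, writing $A\cong k[x_1,\dots,x_n]/(x_1^{p^{r_1}},\dots,x_n^{p^{r_n}})$, and argues by induction on the height of $G$ that every Hopf quotient of $A$ (and hence of $A_L$) is abstractly isomorphic to $k[x_1,\dots,x_n]/(x_1^{p^{s_1}},\dots,x_n^{p^{s_n}})$ with $0\le s_i\le r_i$; since these discrete invariants are already realised by subgroup-schemes defined over $k$, well-correspondence follows. Your spreading-out and specialisation argument is more conceptual and, notably, makes no use of the hypothesis that $G$ is local: it would establish Proposition~\ref{prop:Good_correspondence_for_finite_group_schemes} for arbitrary finite $G$ in one stroke, bypassing both the \'etale/local splitting and the normalizer bookkeeping the paper needs there to recombine the two pieces. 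The trade-off is the constructibility input you flag, which the paper's explicit structure-theoretic approach avoids entirely. One point to tighten: after choosing generators of $I$ over $R$ you should invoke generic flatness (equivalently, localise $R$) so that $(A\otimes_k R)/\mathcal I$ is genuinely $R$-flat and $\mathcal I$ is a Hopf ideal fibrewise before specialising to a $k$-point.
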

\proof
We start with the \'etale case, if $G$ is \'etale, as $k$ and $L$ are algebraically closed, $G$ is a constant group-scheme associated to an abstract group $\Gamma$. In particular $G=\Spec(k^{\Gamma})$ where $k^{\Gamma}$ is the Hopf $k$-algebra spanned by the elements $(e_{g})_{g \in \Gamma}$ where $e_{g}:\Gamma  \rightarrow k$ is the map defined as $e_{g}(h)=\delta_{gh}$ with coalgebra operations
$$\begin{array}{rcl}
\Delta(e_{g}) & = & \sum_{hj = g} e_{h} \otimes e_{j} \\
\epsilon(e_{g})& = &\delta_{eg} \\
S(e_{g}) &= & e_{g^{-1}} 
\end{array}$$
where $\delta$ is the Kronecker delta, $e \in \Gamma$ is the unit element and $\Delta$, $\epsilon$ and $S$ are the comultiplication, counit and antipode morphisms respectively. We can easily see that $G_{L}$ has the same Hopf algebra except that we use $L$ instead of $k$. From that fact we see that the coalgebra formulas remain unchanged after base change to $L$. As subgroup-schemes of $G$ in this case are constant group-schemes associated with subgroups of $\Gamma$, we conclude the proposition in this case. \\
For the local case, as $k$ and $L$ are perfect, from the structure theorem for finite local group-schemes \cite[Theorem 11.29]{MilneAG2017}, if we write $G=\Spec(A)$, we have that
\begin{equation} \label{eq:Hopf_algebra_of_a_local_group_scheme_over_a_perfect_field}
A \cong k[x_{1},x_{2},\cdots,x_{n}]/(x_{1}^{p^{r_{1}}},\cdots,x_{n}^{p^{r_{n}}})
\end{equation}
for some integers $r_{i} \geq 1$. 
If we take the base change $A_{L} = A \times_{k} L$, as a Hopf algebra, it has formulas for its coalgebra structure with coefficients over $k$. \\
Now let $H_{L}$ be a subgroup-scheme of $G_{L}$, we see immediately that $H_{L}$ is connected because $G_{L}$ is topologically a point. In that case, if $H_{L} = \Spec(B_{L})$, we know that $B_{L}$ is a quotient of $A_{L}$, which has the same coalgebra formulas as $A_{L}$, with coefficients over $k$, but it may have a different presentation with polynomials with coefficients over $L \backslash k$. \\
We claim that in fact if $B$ is the Hopf algebra corresponding to a subgroup-scheme $H$ of $G=\Spec(A)$, we have
$$ B \cong k[x_{1},x_{2},\cdots,x_{n}]/(x_{1}^{p^{s_{1}}},\cdots,x_{n}^{p^{s_{n}}}) $$
with $0 \leq s_{i} \leq r_{i}$ for all $1 \leq i \leq n$ which implies the proposition in the local case. \\
For this, we first recall that if $F^{r}:G \rightarrow G^{(p^{r})}$ is the $r$-th iteration of the relative Frobenius morphism, its image corresponds to the inclusion $A^{p^{r}} \rightarrow A$, this is a sub-Hopf algebra of $A$. We say that $G$ has \textbf{height} $\leq r$ for $r \geq 1$ if $F^{r}$ has a trivial image, and we say that $G$ has \textbf{height} $r$ if it is of height $\leq r$ but not of height $\leq r-1$. Finite local group-schemes over $k$ are precisely the algebraic group-schemes of finite height. Let $I$ be the augmentation ideal of $A$, i.e. the kernel of the counit map $\epsilon:A \rightarrow k$, if $G$ has height $\leq r$, then $x^{p^{r}}=0$ for all $x \in I$. \\
We will proceed with the proof of the claim by induction on the height of $G$. If $G$ has height one, we have a special case of equation \ref{eq:Hopf_algebra_of_a_local_group_scheme_over_a_perfect_field}:
$$ A \cong  k[x_{1},x_{2},\cdots,x_{n}]/(x_{1}^{p},\cdots,x_{n}^{p}) $$
where the elements $x_{i} \in I$ ($1 \leq i \leq n$) map to a base of the $k$-vector space $I/I^{2}$ \cite[Proposition 11.28]{MilneAG2017}. If $B$ is a Hopf algebra quotient of $A$, and $I^{\prime}$ is its augmentation ideal, we have a surjection $I/I^{2} \rightarrow I^{\prime}/\left(I^{\prime}\right)^{2}$. Then, the claim holds for $H=\Spec(B)$ as it also has height one and we can identify those $x_{i}$ that do not map to zero with their classes in $I^{\prime}$ using the quotient morphism $A \rightarrow B$. \\
For the general case, from the proof of \cite[Theorem 11.29]{MilneAG2017}, we can be more specific with the presentation of $A$: we can write it as
\begin{equation} \label{eq:Hopf_algebra_of_a_local_group_scheme_over_a_perfect_field_more_detailed}
A \cong k[y_{1},y_{2},\cdots,y_{m},z_{1},\cdots,z_{k}]/(y_{1}^{p^{u_{1}+1}},\cdots,y_{m}^{p^{u_{m}+1}},z_{1}^{p},\cdots z_{k}^{p})
\end{equation}
where the elements $y_{i} \in I$ ($1 \leq i \leq m$) come from the presentation of $A^{p}$
$$A^{p} \cong k[t_{1},t_{2},\cdots,t_{m}]/(t_{1}^{p^{u_{1}}},\cdots,t_{m}^{p^{u_{m}}})$$
where $y_{i}^{p} = t_{i}$, and the elements $z_{j} \in I$ ($1 \leq j \leq k$) are such that their classes $\bar{z_{i}}$ in the $k$-vector space $I/I^{2}$ form a maximal subset such that each $z_{i}^{p}=0$ and the set $\{\bar{z_{1}}, \cdots , \bar{z_{k}}\}$ is linearly independent. One key part in the proof is that the set $\{\bar{y_{i}},\bar{z_{j}}\}_{i,j}$ is a basis for $I/I^{2}$, in particular, the complement of the vector subspace spanned by the $\bar{y_{i}}$ is the subspace spanned by the $\bar{z_{j}}$. \\
Now let $B$ be a Hopf algebra which is a quotient of $A$, and let $J$ be the Hopf ideal of $A$ such that $B=A/J$. If $I^{\prime}$ is the augmentation ideal of $B$, we have then a surjective morphism $T:I/I^{2} \rightarrow I^{\prime}/\left(I^{\prime}\right)^{2}$. As the quotient morphism $A \rightarrow B$ maps $A^{p}$ onto $B^{p}$ and $A^{p}$ has one fewer height than that of $A$, using the induction hypothesis, we see that the claim holds for $B^{p}$ and thus we have
$$ B^{p} \cong k[t_{1},t_{2},\cdots,t_{m}]/(t_{1}^{p^{v_{1}}},\cdots,t_{m}^{p^{v_{m}}}) $$
with $0 \leq v_{i} \leq u_{i}$. If we discard the elements with $v_{i}=0$ we see that $T$ maps $\left\langle \bar{y_{1}},\cdots, \bar{y_{m}} \right\rangle$ to the subspace $V$ of $I^{\prime}$ spanned by the images of the elements $y_{i}$ with $v_{i} \neq 0$. On the other hand, if $v_{i}=0$ this means that $y_{i}^{p} \in J$ and then these elements, together with the $z_{j}$, map to elements of $I^{\prime}$ with zero $p$-th power, which is the complement of $V$. \\
Finally, if we suppose that for all $1 \leq i \leq l$ we have $v_{i} \neq 0$ while $v_{i}=0$ for $l < i \leq m$, as long a none of the elements forming the base of $I/I^{2}$ belong to $J$, we will have that
$$ B \cong k[y_{1},y_{2},\cdots,y_{m},z_{1},\cdots,z_{k}]/(y_{1}^{p^{v_{1}+1}},\cdots,y_{l}^{p^{v_{l}+1}},y_{l+1}^{p},\cdots,y_{m}^{p} ,z_{1}^{p},\cdots z_{k}^{p}).  $$
Otherwise, the corresponding exponent will be 0 for each element of the base belonging to $J$ finishing the proof. 
\endproof
Before the proof of the general result for finite group-schemes, we introduce the group-schematic version of the normalizer:
\begin{definition} \label{definition:Normalizing_subgroup_and_normalizer}
	Let $G$ be an algebraic group-scheme over $k$ and let $H,N$ be subgroup-schemes of $G$. We say that $N$ \textbf{normalizes} $H$ if for any $k$-algebra $R$ the group $\widetilde{N}(R)$ normalizes $\widetilde{H}(R)$, this just means that $\widetilde{N}(R)\widetilde{H}(R)\widetilde{N}(R)^{-1}=\widetilde{H}(R)$ for all $R$. \\
	The \textbf{normalizer} of $H$ in $G$ is the subgroup-scheme $N_{G}(H)$ of $G$ corresponding to the functor
	$$ N_{G}(R) = \{ g \in \widetilde{G}(R): \, g_{S}\widetilde{H}(S)g_{S}^{-1} =\widetilde{H}(S) \text{ for all $R$-algebras $S \in \text{Alg}_{k}^{0}$} \} $$ 
	where for a $R$-algebra $R \rightarrow S$, $g_{S}$ denotes the image of $g$ by the induced morphism.
\end{definition} 
\begin{remark} \label{remark:Remarks_normalizers}
	The functor defining the normalizer is representable \cite[Prop. 1.83]{MilneAG2017}, hence $N_{G}(N)$ is an actual subgroup-scheme of $G$. It is easy to see that $N_{G}(H)$ normalizes $H$. \\
	In fact, directly from the definition, we conclude that a subgroup-scheme $N$ of $G$ normalizes $H$ if and only if $N \subset N_{G}(H)$ and hence the normalizer is the biggest subgroup-scheme of $G$ that normalizes $H$. In particular, a subgroup-scheme $H$ is normal in $G$ if and only if $G=N_{G}(H)$. \\
	Finally, it is important to remark that the formation of normalizers commutes with extension of the base field, this means that for an extension $k \subset L$, $\left( N_{G}(H) \right)_{L} =N_{G_{L}}(H_{L})$.
\end{remark}
\begin{proposition} \label{prop:Good_correspondence_for_finite_group_schemes}
	Let $G$ be a finite group-scheme over the algebraically closed field $k$ and let $L$ be an algebraically closed extension of $k$. Then any subgroup-scheme of $G_{L}$ is well-corresponded.
\end{proposition}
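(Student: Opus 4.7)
My plan is to reduce to the two cases already handled in Proposition~\ref{prop:Well_correspondence_for_etale_and_local_finite_group_schemes} by means of the local-\'etale decomposition. Since $k$ is perfect (indeed algebraically closed), the local-\'etale sequence $1 \to G^{0} \to G \to G^{\text{\'et}} \to 1$ splits canonically as $G \cong G^{0} \rtimes G^{\text{\'et}}$, with $G^{\text{\'et}}$ identified with the reduced subscheme $G_{\text{red}}$; the same holds after base change to $L$, and $G_{L} \cong G_{L}^{0} \rtimes G_{L}^{\text{\'et}}$, with the whole decomposition commuting with base change.

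Given any subgroup-scheme $H \subset G_{L}$, I would apply the same splitting to $H$ itself to get $H = H^{0} \rtimes H^{\text{\'et}}$ where $H^{0} = H \cap G_{L}^{0}$ is local and $H^{\text{\'et}} = H_{\text{red}}$ is \'etale; via the inclusion $H_{\text{red}} \hookrightarrow G_{L,\text{red}} = G_{L}^{\text{\'et}}$, I regard $H^{\text{\'et}}$ as a subgroup-scheme of $G_{L}^{\text{\'et}}$. By Proposition~\ref{prop:Well_correspondence_for_etale_and_local_finite_group_schemes} together with the uniqueness of Remark~\ref{remark:There_is_at_most_one_subgroup_scheme_which_is_corresponded}, there exist subgroup-schemes $K^{0} \subset G^{0}$ and $K^{\text{\'et}} \subset G^{\text{\'et}}$ whose base changes to $L$ recover $H^{0}$ and $H^{\text{\'et}}$ respectively.

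The step that requires real work, and the only place where the argument is not a formal consequence of the earlier proposition, is assembling $K^{0}$ and $K^{\text{\'et}}$ into an actual subgroup-scheme of $G$: the semidirect product $K^{0} \rtimes K^{\text{\'et}}$ sits naturally inside $G = G^{0} \rtimes G^{\text{\'et}}$ only if $K^{\text{\'et}}$ normalizes $K^{0}$. To verify this I would invoke the base-change compatibility of normalizers (Remark~\ref{remark:Remarks_normalizers}): inside $G_{L}$ we certainly have $H^{\text{\'et}} \subset N_{G_{L}}(H^{0}) = N_{G}(K^{0})_{L}$, since $H^{0}$ is normal in $H$. Hence the closed immersion $K^{\text{\'et}} \cap N_{G}(K^{0}) \hookrightarrow K^{\text{\'et}}$ becomes an isomorphism after extending scalars to $L$, and is therefore already an isomorphism by faithfully flat descent. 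Setting $K := K^{0} \rtimes K^{\text{\'et}} \subset G$ then gives $K_{L} = H^{0} \rtimes H^{\text{\'et}} = H$ inside $G_{L}$, establishing that $H$ is well-corresponded. I expect the normalization check, rather than either of the invocations of the local or \'etale case, to be the main obstacle.
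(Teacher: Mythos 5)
Your proposal is correct and takes essentially the same approach as the paper: split the local-\'etale sequence of $G_{L}$ and of $H$, apply the local and \'etale cases of Proposition \ref{prop:Well_correspondence_for_etale_and_local_finite_group_schemes} to descend $H^{0}$ and $H_{\text{red}}$ separately, and then use the base-change compatibility of normalizers to see that the descended \'etale part normalizes the descended local part, so that the semidirect product is a genuine subgroup-scheme of $G$ whose base change is $H$. The only detail the paper records that you leave implicit is the (immediate) fact that the two descended pieces intersect trivially, since they lie in $G^{0}$ and $G_{\text{red}}$ respectively and $G^{0} \cap G_{\text{red}} = 1$.
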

\proof 
First we recall that in the local-\'etale exact sequence
$$ 1 \rightarrow G^{0} \rightarrow G \rightarrow G^{\text{\'et}} \rightarrow 1 $$
the group-schemes $G^{0}$ and $G^{\text{\'et}}$ are universal in the sense that any morphism $G^{\prime} \rightarrow G$ with $G^{\prime}$ local factors through $G^{0}$, and analogously, any morphism $G \rightarrow G^{\prime}$ with $G^{\prime}$ \'etale factors through $G^{\text{\'et}}$. \\
From this we can easily show that for a subgroup-scheme $G^{\prime} \subset G$ we have the following diagram
$$ \xymatrix{1 \ar[r] &\left(G^{\prime} \right)^{0} \ar[r]  \ar[d] &G^{\prime} \ar[r] \ar[d] & \left(G^{\prime} \right)^{\text{\'et}}  \ar[r] \ar[d] & 1\\
	1 \ar[r] & G^{0} \ar[r] & G \ar[r] & G^{\text{\'et}} \ar[r] & 1} $$
with exact rows, commutative squares, and each vertical arrow is an inclusion of subgroup-schemes. \\
Now we consider a subgroup-scheme $H_{L}$ of $G_{L}$, from the fact that the local-\'etale connected sequence commutes with extension of base fields and from Proposition \ref{prop:Well_correspondence_for_etale_and_local_finite_group_schemes}, we see that there exist two subgroup-schemes $H^{\prime} \subset G^{0}$ and $H^{\prime \prime} \subset G^{\text{\'et}}$ such that $\left( H^{\prime} \right)_{L} = \left(H_{L}\right)^{0}$ and $ \left( H^{\prime \prime} \right)_{L} = \left(H_{L}\right)^{\text{\'et}} $. As $L$ is perfect, we also have that $H_{L} \cong \left(H_{L}\right)^{0} \rtimes M$ where $M \subset \left( G_{L} \right)_{\text{red}}$ is the restriction to $H^{\text{\'et}}$ of the retract $i_{L}:\left( G_{L} \right)_{\text{red}} \overset{\sim}{\rightarrow} G_{L}^{\text{\'et}}$ which is the same as $H_{\text{red}}$. \\
As $\left( G_{L} \right)_{\text{red}}$ is \'etale, we have an \'etale subgroup-scheme $J \subset G_{\text{red}}$ with $J_{L}=M$ as the latter subgroup-scheme is well corresponded, because the isomorphism $i_{L}$ descends to the retraction $i:G_{\text{red}} \overset{\sim}{\rightarrow} G^{\text{\'et}}$, $J$ is isomorphic to $H^{\prime \prime}$ via this retraction. \\
We claim that we can form the subgroup-scheme $H^{\prime} \rtimes J$ and its base change to $L$ is $H_{L}$. The base change part comes from the fact that $H_{L} = H_{L}^{0} \rtimes M$ and thus we only need to prove that $H^{\prime} \cap J$ is trivial, and that $J$ normalizes $H^{\prime}$. The intersection is trivial because $J \cap H^{\prime} \subset G_{\text{red}} \cap G^{0} = 1$ and $J$ normalizes $H^{\prime}$ because as $M$ normalizes $H_{L}^{0}$, we have an inclusion $M \subset N_{G_{L}}(H^{0})$. But the formation of a normalizer commutes with extensions of the base field, and thus $\left(N_{G}(H^{\prime}) \right)_{L} =N_{G_{L}}(H^{0})$ which implies that $J \subset N_{G}(H^{\prime})$, finishing the proof.
\endproof
Now we will apply this result to the following: Let $f:X \rightarrow S$ be the a morphism of proper reduced and connected $k$-schemes, with $k$ algebraically closed, we consider $L=\overline{\kappa(\eta)}$ where $\eta$ is the generic point of $S$, and we will also consider $\bar{\eta}$ the geometric generic point of $S$. Then for a pointed $G$-torsor $T \rightarrow X$ we can consider its pull-back to the geometric generic fiber $T_{\bar{\eta}} \rightarrow X_{\bar{\eta}}$ which is a pointed $G_{L}$-torsor. We then have morphisms over $k$ and $L$ resp. $\pi_{1}^{N}(X) \rightarrow G$ and $\pi_{1}^{N}(X_{\bar{\eta}}) \rightarrow G_{\bar{\eta}}$ and as a consequence of  Proposition \ref{prop:Good_correspondence_for_finite_group_schemes}, we have the following:
\begin{corollary} \label{corollary:Good_correspondence_of_subgroups_implies_good_correspondence_of_quotient_torsors}
	Let $f:X \rightarrow S$ be the a morphism of proper reduced and connected $k$-schemes, with $k$ algebraically closed. Let $T \rightarrow X$ be a pointed $G$-torsor and let $T_{\bar{\eta}} \rightarrow X_{\bar{\eta}}$ be its pull-back to the geometric generic fiber. Then, there exists a one-to-one correspondence between quotients of $T$ and quotients of $T_{\bar{\eta}}$
\end{corollary}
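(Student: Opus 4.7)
The plan is to reduce the corollary to Proposition \ref{prop:Good_correspondence_for_finite_group_schemes} via the standard classification of quotients of a torsor in terms of subgroup-schemes of its structure group, and then check that this classification is compatible with base change to the geometric generic fiber.

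First, I would identify the quotients of the pointed $G$-torsor $T \to X$ with subgroup-schemes of $G$. Explicitly, every subgroup-scheme $H \subset G$ gives rise to the quotient $T/H \to X$, with $T \to T/H$ an $H$-torsor, and conversely every factorization $T \to Q \to X$ with $T \to Q$ faithfully flat arises in this way for a unique $H$ (this is the usual fppf-descent classification, applicable because $G$ is affine and $T \to X$ is a $G$-torsor). The exact same statement applied to the $G_L$-torsor $T_{\bar\eta} \to X_{\bar\eta}$, where $L = \overline{\kappa(\eta)}$, identifies the quotients of $T_{\bar\eta}$ with subgroup-schemes of $G_L$.

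Second, Proposition \ref{prop:Good_correspondence_for_finite_group_schemes} asserts that base change along $k \hookrightarrow L$ induces a bijection between subgroup-schemes of $G$ and subgroup-schemes of $G_L$ (existence is the content of the proposition, uniqueness is Remark \ref{remark:There_is_at_most_one_subgroup_scheme_which_is_corresponded}). Composing with the classifications above yields the desired bijection between the two sets of quotients, sending a quotient $T/H \to X$ to the quotient of $T_{\bar\eta}$ corresponding to $H_L \subset G_L$.

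Finally, I would verify that this bijection is the one induced by pull-back to $X_{\bar\eta}$, i.e.\ that $(T/H)_{\bar\eta} \cong T_{\bar\eta}/H_L$. This follows from the fact that the formation of torsor quotients commutes with flat base change: the $H$-torsor $T \to T/H$ pulls back along $\bar\eta \to S$ to an $H_L$-torsor $T_{\bar\eta} \to (T/H)_{\bar\eta}$, which by uniqueness of quotients must be isomorphic to $T_{\bar\eta} \to T_{\bar\eta}/H_L$. The main obstacle is essentially bookkeeping around this base-change compatibility, and the only nontrivial input is the already-established Proposition \ref{prop:Good_correspondence_for_finite_group_schemes}; the rest is a formal consequence.
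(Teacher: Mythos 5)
Your proposal is correct and follows essentially the same route as the paper: classify quotients of $T$ (resp.\ $T_{\bar\eta}$) by subgroup-schemes of $G$ (resp.\ $G_L$), invoke Proposition \ref{prop:Good_correspondence_for_finite_group_schemes} for the bijection on subgroup-schemes, and check compatibility with base change. The only detail the paper spells out that you gloss over is that the correspondence preserves normality in both directions (via the fact that normalizers commute with field extension), which is needed if one wants the bijection to respect the distinction between genuine torsor quotients and weak quotients.
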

\proof
Let $L$ be the residue field of the geometric point $\bar{\eta}$. Quotients of $T$ (resp. $T_{\bar{\eta}}$) are in one-to-one correspondence with quotients of $G$ (resp. $G_{L}$) and these are in one-to-one correspondence with subgroup-schemes of $G$ (resp. $G_{L}$). We know that subgroup-schemes of $G_{L}$ are well-corresponded by Proposition \ref{prop:Good_correspondence_for_finite_group_schemes}, and additionally, if $H$ is a normal subgroup-scheme of $G$, the fact that $H$ is normal if and only if $N_{G}(H)=G$ and the formation of normalizer commutes with extensions of the base field implies that the descent of a normal subgroup-scheme of $G_{L}$ is normal in $G$. \\
If $H$ is not normal, both quotients $G/H$ and $G_{L}/H_{L}$ exists and likewise for the quotient schemes $T/H$ and $T_{\bar{\eta}}/H_{L}$ as $H$ is well-corresponded with $H_{L}$. By looking for example, at the underlying algebras defining the group-scheme quotients and the associated essentially finite bundles/covers (see Definition \ref{defi:Weak_quotients} and Remark \ref{remark:About_weak_quotients} below), we can conclude the result in this case as well.
\endproof
From these results, we will abuse notation on the rest of this article and omit the subscript alluding to the field for finite group-schemes unless it is needed for clarity.
\subsubsection{Core of a subgroup-scheme} \label{subsubsec:Core_of_a_subgroup-scheme}
Let $k$ be a perfect field. We start with a definition:
\begin{defi} \label{defi:Weak_quotients}
	Let $t:T \rightarrow X$ be a $G$-torsor with $G$ finite, if $H \subset G$ is a non-normal subgroup-scheme of $G$, we will call the quotients by the restricted multiplication action \textbf{weak quotients}. It is a scheme $G/H$ with a faithfully flat arrow $G \rightarrow G/H$. \\ 
	In the case of a torsor $t:T \rightarrow X$, we will call the quotient by the restricted $H$-action a \textbf{weak quotient} when $H$ is no normal. It is a scheme $T/H$ with a finite and faithfully flat morphism $t^{\prime}:T/H \rightarrow X$, we have also a faithfully flat morphism $q:T \rightarrow T/H$.
\end{defi}
\begin{remark} \label{remark:About_weak_quotients}
	Both arrows $G \rightarrow G/H$ and $T \rightarrow T/H$ are $H$-torsors. In the case of group-schemes the quotient $G/H$ is a well-defined scheme, its functor of points $\widetilde{G/H}$ is the fppf (actually fpqc) sheafification of the pre-sheaf $R \mapsto \tilde{G}(R)/\tilde{H}(R)$ over the category of affine $k$-schemes of finite type with the fppf topology, for more details see \cite{RaynaudQuotient} and \cite[\S 5 c.]{MilneAG2017}, the second reference is valid for any quotient of group-schemes of finite type. The quotient of group-schemes possesses a natural left $G$-action $G \times_{k} G/H \rightarrow G/H$ that comes from the left action of $\tilde{G}$ over $\tilde{G}/\tilde{H}$ by multiplication on the left. \\
	In the particular case of finite group-schemes, if $G=\Spec(A)$ where $A$ is a Hopf-algebra over $k$, then $G/H = \Spec(A^{H})$ as stated in Section \ref{subsec:Global_sections_ess_fin_bundles}. The morphism $t^{\prime}:T/H \rightarrow X$ is an essentially finite cover such that we have an inclusion of bundles $\left( t^{\prime} \right)_{*}(\Strsh{T/H}) \subset t_{*}(\Strsh{T})$ which corresponds to the inclusion of algebras $A^{H} \hookrightarrow A$ as elements of $\text{Rep}_{G}$ under tannakian correspondence.
\end{remark}
Now we will introduce a group-theoretic notion:
\begin{defi} \label{defi:Core_of_an_abstract_subgroup}
	Let $\Gamma$ be an abstract group and let $J \subset \Gamma$ be a subgroup. The \textbf{core of $J$} is the subgroup of $\Gamma$ defined as
	$$ \text{Core}_{\Gamma}(J)=\bigcap_{g \in \Gamma} gJg^{-1}. $$
\end{defi}
We can further characterize this group.
\begin{proposition} \label{prop:Properties_of_the_core_of_an_abstract_subgroup}
	Let $\Gamma$ be an abstract group and let $J \subset \Gamma$ be a subgroup. Then, $\text{Core}_{\Gamma}(J)$ is the biggest normal subgroup of $\Gamma$ contained in $J$, and if we consider $n=[\Gamma:J]$ the index of $J$, the core is also the kernel of the morphism $\Gamma \rightarrow S_{n}$ associated to the action $\mu:\Gamma \times \Gamma/J \rightarrow \Gamma/J$ of left multiplication by $\Gamma$ over the set of right $J$-cosets, where $S_{n}$ is the symmetric group on $n$ elements. In particular, the core of $J$ is trivial if and only if the action mentioned above is faithful.
\end{proposition}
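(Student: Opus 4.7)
\medskip

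The plan is to verify the three assertions in turn, since they are all classical facts whose proofs are short once one writes down the definitions carefully.

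First I would check that $\mathrm{Core}_{\Gamma}(J)$ is normal in $\Gamma$ and contained in $J$. Containment is immediate by taking $g=e$ in the intersection. For normality, I would fix $h\in\Gamma$ and compute
\[
h\,\mathrm{Core}_{\Gamma}(J)\,h^{-1}=h\Bigl(\bigcap_{g\in\Gamma} gJg^{-1}\Bigr)h^{-1}=\bigcap_{g\in\Gamma}(hg)J(hg)^{-1}=\mathrm{Core}_{\Gamma}(J),
\]
where the last equality uses the substitution $g\mapsto h^{-1}g$ which permutes $\Gamma$.

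Next I would show maximality: if $N\subset J$ is any normal subgroup of $\Gamma$, then $N=gNg^{-1}\subset gJg^{-1}$ for every $g\in\Gamma$, so $N\subset\bigcap_g gJg^{-1}=\mathrm{Core}_{\Gamma}(J)$. Combined with the previous step, this shows $\mathrm{Core}_\Gamma(J)$ is the largest normal subgroup of $\Gamma$ contained in $J$.

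Finally, for the kernel description, I would describe the action map $\mu:\Gamma\to S(\Gamma/J)\cong S_n$ explicitly: it sends $g$ to the permutation $hJ\mapsto ghJ$. An element $g$ is in $\ker\mu$ if and only if $ghJ=hJ$ for every $h\in\Gamma$, equivalently $h^{-1}gh\in J$ for every $h$, equivalently $g\in hJh^{-1}$ for every $h$, equivalently $g\in\mathrm{Core}_\Gamma(J)$. The last assertion then follows at once: an action is faithful precisely when its associated homomorphism has trivial kernel.

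I do not expect any real obstacle here; the whole argument is a matter of unwinding definitions and performing the standard index manipulation. The only care needed is to be consistent about whether $\Gamma/J$ denotes left or right cosets and to reindex the intersection correctly in the normality computation.
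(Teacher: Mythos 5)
Your proof is correct and complete; the paper explicitly leaves this proposition to the reader, and your argument is exactly the standard one that is intended (reindex the intersection for normality, conjugation-invariance for maximality, and the coset-stabilizer computation for the kernel). The only cosmetic point, which you already flag, is that the statement's phrase ``right $J$-cosets'' sits slightly awkwardly with the notation $\Gamma/J$ and the left-multiplication action; your convention (left cosets $hJ$, action $hJ\mapsto ghJ$) is the one that makes the computation go through.
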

We leave the proof of this proposition to the reader. \\
We are going to construct an analogous notion of core at least for finite group-schemes that satisfies the same properties of the core of abstract subgroups. From now on, $k$ will be any field up to end of this subsection.
\begin{defi} \label{defi:Core_of_a_subgroup-scheme}
	Let $G$ be a finite group-scheme and $H \subset G$ a subgroup-scheme. As we mentioned in Remark \ref{remark:About_weak_quotients}, $A^{H}$ is the $k$-algebra associated to $G/H$ and the $G$-action over this scheme defines a comodule coaction of $A$ over $A^{H}$, or equivalently, a morphism of group-schemes $\mu:G \rightarrow \text{GL}(A^{H})$. We define the \textbf{core of $H$} in $G$ as $\text{Core}_{G}(H):=\ker(\mu)$.
\end{defi}
It is clear from the definition that the core of a subgroup-scheme $H$ is trivial if and only if $\mu$ is a faithful representation. We recall that as a functor, we have $\text{GL}(A^{H})(R)=\text{Aut}_{R}(A^{H} \otimes R)$.
\begin{proposition} \label{prop:Properties_of_the_core_of_subgroup-schemes}
	The core of a subgroup-scheme $H \subset G$ is the biggest normal subgroup-scheme of $G$ contained in $H$.
\end{proposition}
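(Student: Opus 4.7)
The plan is to establish three facts: first, $\text{Core}_{G}(H) \subset H$; second, $\text{Core}_{G}(H)$ is normal in $G$; and third, any normal subgroup-scheme $N \subset G$ contained in $H$ is itself contained in $\text{Core}_{G}(H)$. Normality is immediate, as $\ker(\mu)$ is the kernel of a morphism of group-schemes.

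For the inclusion $\text{Core}_{G}(H) \subset H$, the strategy is to exhibit the distinguished base point $\bar{e}: \Spec(k) \to G/H$ coming from the identity of $G$ and to identify $H$ with the scheme-theoretic stabilizer of $\bar{e}$ under the left $G$-action on $G/H$. Since $\ker(\mu)$ acts trivially on $A^{H}$, hence on $G/H$, it in particular fixes $\bar{e}$, so once the identification $\mathrm{Stab}_{G}(\bar{e}) = H$ is available the inclusion follows. To check this identification, I would work at the functor of points: because $\widetilde{G/H}$ is the fppf sheafification of $R \mapsto \tilde{G}(R)/\tilde{H}(R)$ (as recalled in Remark \ref{remark:About_weak_quotients}), an $R$-point $g \in \tilde{G}(R)$ fixes $\bar{e}$ if and only if $g$ maps to the trivial coset, which after passing to some fppf cover $R \to R'$ amounts to $g \in \tilde{H}(R')$. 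Faithfully flat descent along the closed immersion $H \hookrightarrow G$ then forces $g \in \tilde{H}(R)$.

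For maximality, the idea is to show that any normal $N \subset H$ acts trivially on $G/H$, whence $N \subset \ker(\mu) = \text{Core}_{G}(H)$. Given $R$, $n \in \tilde{N}(R)$, and an $R$-point $\bar{g}$ of $G/H$, choose an fppf extension $R \to R'$ and a lift $g \in \tilde{G}(R')$ of $\bar{g}$. Then in $\widetilde{G/H}(R')$ one has $n \cdot \bar{g} = \overline{n g} = \overline{g (g^{-1} n g)}$, and normality of $N$ yields $g^{-1} n g \in \tilde{N}(R') \subset \tilde{H}(R')$, so $\overline{g(g^{-1} n g)} = \bar{g}$. The sheaf property propagates this back to $R$, giving the desired triviality of the action of $N$ on $\bar{g}$.

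The only delicate point is the systematic bookkeeping with fppf sheafification: one must pass to fppf covers to lift points of $G/H$ to points of $G$, and then invoke faithfully flat descent to reassemble the conclusions over the original base. Once this is handled, both (i) and (iii) become transparent translations into functor-of-points language of the abstract-group identities ``$H$ stabilizes $eH$'' and ``$g^{-1} N g \subset N \subset H$ when $N$ is normal,'' exactly in parallel with Proposition \ref{prop:Properties_of_the_core_of_an_abstract_subgroup}, which motivates Definition \ref{defi:Core_of_a_subgroup-scheme}.
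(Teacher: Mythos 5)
Your proof is correct, but it is organized differently from the paper's. The paper introduces an auxiliary functor $F(R)=\{g\in\tilde G(R):\ g_S\in \mathrm{Core}_{\tilde G(S)}(\tilde H(S))\ \text{for all }R\text{-algebras }S\}$, observes that $F$ is a normal subfunctor of $\tilde G$ contained in $\tilde H$ and containing $\tilde N$ for every normal $N\subset H$, and then proves that $F$ is precisely the functor of points of $\ker(\mu)$ by the two inclusions $\tilde C\subset F$ and $F\subset\tilde C$; all three desired properties drop out of that single identification. You instead verify the three properties directly: normality for free since $\ker(\mu)$ is the kernel of a homomorphism of group-schemes; containment in $H$ via the identification $H=\mathrm{Stab}_G(\bar e)$ (which in fact follows immediately from $G\to G/H$ being an $H$-torsor with the section $e$ over $\bar e$, so you could skip the fppf cover there); and maximality via the conjugation computation $n\cdot\bar g=\overline{g(g^{-1}ng)}=\bar g$, pushed through the fppf sheafification exactly as the paper pushes the trivial action of $F$ on $\tilde G/\tilde H$ up to $\widetilde{G/H}$. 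The underlying technical content — the description of $\widetilde{G/H}$ as an fppf sheafification, separatedness of sheaves to descend identities from a cover, and the reduction to the abstract-group statements of Proposition \ref{prop:Properties_of_the_core_of_an_abstract_subgroup} — is the same in both arguments; your decomposition makes the normality and containment steps more transparent, while the paper's functor $F$ packages the "pointwise core" idea into one object. Both are valid proofs.
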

\proof
Let $F:\text{Alg}_{k}^{0} \rightarrow \text{Grp}$ be the functor given by 
$$F(R)=\{g \in \tilde{G}(R): \, \forall \text{ $R$-algebra } R \rightarrow S, \, g_{S} \in \text{Core}_{\tilde{G}(S)}(\tilde{H}(S))\}$$
where $g_{S}$ is the image of $g$ in $\tilde{G}(S)$, it is clearly a sub-functor of $\tilde{H}$ and for any $k$-algebra $R$, $F(R) \lhd \tilde{G}(R)$. Moreover, it contains any functor of the form $\tilde{N}$ where $N$ is a normal subgroup-scheme of $G$ that is contained in $H$ from Proposition \ref{prop:Properties_of_the_core_of_an_abstract_subgroup}. We will show that $F$ is the functor of points of $\text{Core}_{G}(H)$. \\
Let us denote the functor of points of $\text{Core}_{G}(H)$ as $\tilde{C}$, and let us start by showing that $\tilde{C} \subset F$. If $g_{R} \in \tilde{C}(R) \subset \tilde{G}(R)$ we can easily see that from the fact that it induces the identity $R$-automorphism of $A^{H} \otimes_{k} R$, this element acts like the identity over $\widetilde{G/H}(R)=\text{Hom}_{k-\text{alg}}(A^{H},R)$, in particular, it acts like the identity over $\tilde{G}(R)/\tilde{H}(R)$ and thus $g_{R} \in \text{Core}_{\tilde{G}(R)}(\tilde{H}(R))$ and likewise over any $R$-algebra $R \rightarrow S$, from which we conclude that $\tilde{C} \subset F$. \\
On the other hand, $F$ acts trivially over $\tilde{G}/\tilde{H}$ and then using the properties of $\widetilde{G/H}$ as a sheafification of this pre-sheaf, we see that the trivial action of $F$ over $\tilde{G}/\tilde{H}$ can be lifted to a trivial action of $F$ over $\widetilde{G/H}=\text{Hom}_{k-\text{alg}}(A^{H},-)$, this can be viewed as a natural transformation of functors $F \rightarrow \text{GL}(A^{H})$ that has a trivial image over any $k$-algebra. Thus, $F \subset \tilde{C}$ finishing the proof.
\endproof
\subsection{Pull-back of pure torsors} \label{subsec:Pull-back_of_pure_torsors}
Let us start with a lemma:
\begin{lemma} \label{lemma:Faithful_representations_over_non-pull-back_torsors}
	Let $f:X \rightarrow S$ be a morphism of proper, reduced and connected schemes of finite type over $k$, with $k$ perfect. We will further assume that the induced morphism $\pi^{N}(f):\pi_{1}^{N}(X,x) \rightarrow \pi_{1}^{N}(S,s)$ is faithfully flat for compatible rational points. \\
	Let $V$ be an essentially finite bundle corresponding to a representation of $\pi_{1}^{N}(X,x)$ that we suppose to lay inside a full subcategory $\text{Rep}_{k}(G)$ for a finite group-scheme $G$ such that the Nori-reduced $G$-torsor associated to the morphism of group-schemes $\pi_{1}^{N}(X,x) \rightarrow G$ is not the pull-back of a Nori-reduced torsor over $S$. Then, if $V$ corresponds to a faithful representation of $G$, it cannot be of the form $f^{*}(W)$ with $W \in EF(S)$.
\end{lemma}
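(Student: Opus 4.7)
The plan is to argue by contradiction using Tannakian duality and the faithfulness of the representation. Suppose, for contradiction, that $V \cong f^{*}(W)$ for some essentially finite bundle $W$ on $S$. Via Tannakian correspondence, $V$ corresponds to a representation $\rho: \pi_{1}^{N}(X,x) \to \mathrm{GL}(V)$, and by hypothesis this factors as
\[
\pi_{1}^{N}(X,x) \xrightarrow{\alpha} G \hookrightarrow \mathrm{GL}(V),
\]
where $\alpha$ is the faithfully flat morphism defining the given Nori-reduced $G$-torsor and $G \hookrightarrow \mathrm{GL}(V)$ is a closed embedding because the $G$-representation is faithful. On the other hand, $W$ corresponds to a representation $\sigma: \pi_{1}^{N}(S,s) \to \mathrm{GL}(V')$ for some $V'$, and the identification $V \cong f^{*}W$ gives an isomorphism $V \cong V'$ intertwining $\rho$ with $\sigma \circ \pi^{N}(f)$.

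The key step is then to compare kernels. Set $K := \ker(\pi^{N}(f))$. Since $\rho$ factors through $\pi_{1}^{N}(S,s)$, we have $K \subset \ker(\rho)$, and because $G \hookrightarrow \mathrm{GL}(V)$ is a closed immersion, $\ker(\rho) = \ker(\alpha)$. Hence $K \subset \ker(\alpha)$. Using that $\pi^{N}(f)$ is faithfully flat, this containment of kernels allows $\alpha$ to descend through the quotient $\pi_{1}^{N}(X,x) \twoheadrightarrow \pi_{1}^{N}(X,x)/K \cong \pi_{1}^{N}(S,s)$, producing a morphism $\beta: \pi_{1}^{N}(S,s) \to G$ with $\beta \circ \pi^{N}(f) = \alpha$. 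Since $\alpha$ is faithfully flat and $\pi^{N}(f)$ is surjective, $\beta$ is faithfully flat as well, so it corresponds to a Nori-reduced $G$-torsor $T_{S} \to S$.

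The final step is to observe that the pull-back $f^{*}T_{S} \to X$ corresponds, by functoriality of the Tannakian correspondence between pointed torsors and morphisms from the fundamental group-scheme (Definition \ref{defi:Having_a_fundamental_group-scheme}), precisely to the composition $\beta \circ \pi^{N}(f) = \alpha$. Therefore $f^{*}T_{S}$ is isomorphic to the original Nori-reduced $G$-torsor on $X$, contradicting the hypothesis that this torsor is not the pull-back of a Nori-reduced torsor on $S$. The only delicate point is the descent of $\alpha$ through $\pi^{N}(f)$; this is a formal consequence of faithfully flat descent for affine group-schemes, but one should be careful to invoke it in the pro-finite setting, where it follows by passing to finite quotients.
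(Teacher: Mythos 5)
Your proof is correct and follows essentially the same route as the paper: both arguments use faithfulness to identify $G$ with a closed subgroup-scheme of $\mathrm{GL}(V_x)$, use the hypothesis $V\cong f^{*}(W)$ to factor the representation through $\pi_{1}^{N}(S,s)$, and then use faithful flatness of $\pi^{N}(f)$ to produce a faithfully flat morphism $\pi_{1}^{N}(S,s)\rightarrow G$ whose associated Nori-reduced torsor pulls back to $T$, a contradiction. The paper phrases the key step in terms of images (the image of $\pi_{1}^{N}(S,s)\rightarrow \mathrm{GL}(V_x)$ must coincide with that of $\pi_{1}^{N}(X,x)$, namely $G$) rather than your kernel-containment and descent formulation, but the content is the same.
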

\proof 
Let $\alpha:G \rightarrow \text{GL}(V_{x})$ be the representation morphism corresponding to $V$, where $V_{x}$ is the fiber of $W$ over $x$. As a representation of the FGS it can be seen as the composition $\pi_{1}^{N}(X,x) \rightarrow G \rightarrow \text{GL}(V_{x})$. The image of this composition is isomorphic to $G$. \\
Let us suppose that $V$ is the pull-back of an essentially finite bundle $W \in EF(S)$, this implies that we have a commutative diagram
$$\xymatrix{\pi_{1}^{N}(X,x) \ar[r] \ar[d]_{\pi^{N}(f)}& G \ar[r]^-{\alpha} & \text{GL}(V_{x})\\
            \pi_{1}^{N}(S,s) \ar[rru] & &} $$ 
as $\pi^{N}(f)$ is faithfully flat, $\pi_{1}^{N}(S,s) \rightarrow \text{GL}(V_{x})$ factors through $G$. From this, we can easily see that if $T \rightarrow X$ is the $G$-torsor associated to $\pi_{1}^{N}(X,x) \rightarrow G$, it is a pull-back from a $G$-torsor over $S$, a contradiction.
\endproof
We can apply this to weak quotients of pure torsors as follows:
\begin{lemma} \label{lemma:Weak_quotients_of_pure_torsors_are_not_pull-backs}
	Let $f:X \rightarrow S$ be a morphism of proper, reduced and connected schemes of finite type over $k$, where the induced morphism $\pi^{N}(f):\pi_{1}^{N}(X,x) \rightarrow \pi_{1}^{N}(S,s)$ is faithfully flat for compatible rational points. \\
 Let $t: T\rightarrow X$ be a pure Nori-reduced $G$-torsor and let $H$ be any non-trivial subgroup-scheme of $G$. Then for the essentially finite cover $t^{\prime}:T/H \rightarrow X$, the associated essentially finite bundle $\left(t^{\prime}\right)_{*}(\Strsh{T/H}) $ is not the pull-back of any essentially finite bundle over $S$.
\end{lemma}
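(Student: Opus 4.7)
\proof[Proof plan]
The plan is to reduce the claim to Lemma \ref{lemma:Faithful_representations_over_non-pull-back_torsors} by passing to a suitable quotient of $G$. Writing $G = \Spec(A)$, Remark \ref{remark:About_weak_quotients} identifies the essentially finite bundle $V := (t^{\prime})_{*}(\Strsh{T/H})$ with the sub-representation $A^{H} \subset A$ of $G$, equivalently with the representation $\mu: G \to \text{GL}(A^{H})$ of Definition \ref{defi:Core_of_a_subgroup-scheme}. Since $\mu$ need not be faithful, Lemma \ref{lemma:Faithful_representations_over_non-pull-back_torsors} cannot be applied with $G$ directly; the core construction of Subsection \ref{subsubsec:Core_of_a_subgroup-scheme} is designed to correct precisely this defect.

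The concrete steps are as follows. First, I would set $N := \text{Core}_{G}(H) = \ker(\mu)$. By Proposition \ref{prop:Properties_of_the_core_of_subgroup-schemes}, $N$ is a normal subgroup-scheme of $G$ contained in $H$, so $\mu$ factors through a faithful representation $\bar{\mu}: G/N \to \text{GL}(A^{H})$. The next step is to identify the Nori-reduced $G/N$-torsor associated under tannakian duality to the full subcategory generated by $\bar{\mu}$: this is the quotient torsor $T/N \to X$, which is Nori-reduced because by Remark \ref{remark:Nori_reduced_torsors_in_terms_of_the_FGS} the composition $\pi_{1}^{N}(X,x) \to G \to G/N$ of faithfully flat morphisms is faithfully flat.

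It remains to verify the non-pull-back hypothesis for $T/N$. Assuming $H$ is a proper subgroup-scheme of $G$ (the case $H = G$ is excluded since then $T/H = X$ and $V = \Strsh{X}$ is trivially a pull-back), the containment $N \subseteq H$ gives $N \neq G$, hence $T/N$ is a non-trivial quotient of $T$. By the purity of $T$, this quotient is not the pull-back of any torsor over $S$. All hypotheses of Lemma \ref{lemma:Faithful_representations_over_non-pull-back_torsors} are then in place with $G$ replaced by $G/N$, the Nori-reduced torsor $T/N \to X$, and $V$ viewed as corresponding to the faithful representation $\bar{\mu}$; the lemma yields the desired conclusion.

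The main conceptual input is recognizing that the obstruction to applying Lemma \ref{lemma:Faithful_representations_over_non-pull-back_torsors} directly, namely the possible non-faithfulness of $\mu$, is exactly controlled by $\text{Core}_{G}(H)$. Once this identification is made, the rest is routine: Nori-reducedness is preserved under quotients by normal subgroup-schemes, and purity is formulated precisely so as to pass to every non-trivial quotient. No finer geometric input about the fibration $f$ is needed beyond the faithful flatness of $\pi^{N}(f)$, which is already assumed.
\endproof
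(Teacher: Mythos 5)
Your proof is correct and uses the same key ingredients as the paper's own argument --- the core $\text{Core}_{G}(H)$ from Definition \ref{defi:Core_of_a_subgroup-scheme}, Proposition \ref{prop:Properties_of_the_core_of_subgroup-schemes}, and Lemma \ref{lemma:Faithful_representations_over_non-pull-back_torsors} applied to the non-pull-back quotient $T/\text{Core}_{G}(H)$ --- the only difference being that the paper phrases this as an induction on $\text{ord}(G)$, passing to $T/\text{Core}_{G}(H)$ one step at a time, whereas you go directly to the faithful representation of $G/\text{Core}_{G}(H)$; since the core of $H/\text{Core}_{G}(H)$ in $G/\text{Core}_{G}(H)$ is already trivial, the induction terminates after one step and the two arguments coincide. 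Your remark that $H=G$ must be excluded (there $\Strsh{X}=f^{*}(\Strsh{S})$ is a pull-back, so the statement as literally written fails) is a correct reading of the intended hypothesis and consistent with how the lemma is used in Proposition \ref{prop:Pull-back_of_pure_torsors_to_the_geometric_generic_fiber}.
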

\proof 
We will proceed by induction over $\text{ord}(G)$, the case of order 1 is trivial. Let us now suppose $\text{ord}(G) > 1$ and let $H$ be a subgroup-scheme of $G$. If $H$ is normal, there is nothing to prove as $T$ is pure. Otherwise, we consider the cover $t^{\prime}:T/H \rightarrow X$ corresponding in terms of essentially finite bundles to the sub-representation $A^{H}$ of $A$ where $G=\Spec(A)$. Let $K$ be the core of $H$ (Definition \ref{defi:Core_of_a_subgroup-scheme}), there are two possibilities for $K$:
\begin{itemize}
	\item If $K$ is not trivial, then we can see $T/H$ as a quotient of $T/K$ from Proposition \ref{prop:Properties_of_the_core_of_subgroup-schemes}, and thus result follows by applying the induction hypothesis.
	\item If $K$ is trivial, the representation corresponding to $\left(t^{\prime}\right)_{*}(\Strsh{T/H})$ is faithful and thus it is not a pull-back from Lemma \ref{lemma:Faithful_representations_over_non-pull-back_torsors}.
\end{itemize}
\endproof
From this lemma, from now on we can change the definition of a pure torsor in Definition \ref{definition:Pure_pull-back_and_mixed_torsor} to include any quotient, weak quotients included. \\
We need one last lemma in order to prove the main result of this chapter.
\begin{lemma} \label{lemma:Essentially_finite_bundles_trivial_over_the_geometric_generic_fiber_are_pull-backs}
	Let $f:X \rightarrow S$ be a faithfully flat and proper morphism of proper, reduced and connected schemes of finite type over $k$, with $k$ algebraically closed. We will further suppose that $S$ is integral and all geometric fibers are reduced and connected, thus they all have a FGS. \\
	If $V \in EF(X)$ is an essentially finite bundle whose restriction to the geometric generic fiber $X_{\bar{\eta}}$ is trivial where $\eta$ is the generic point of $S$, then $V$ is the pull-back of an essentially finite bundle over $S$. Moreover, if $V=t_{*}(\Strsh{T})$ where $t:T \rightarrow X$ is a finite Nori-reduced torsor, then $T$ is the pull-back of a Nori-reduced torsor over $S$.
\end{lemma}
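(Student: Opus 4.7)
The plan is to construct $W$ on $S$ as $f_{*}V$, show $f^{*}W \cong V$ via the cohomology and base change theorem, and then invoke Lemma \ref{lemma:If_the_pull-back_of_a_bundle_is_ess_finite_then_it_is_ess_finite} to conclude that $W$ is essentially finite on $S$. The main inputs are Lemma \ref{lemma:Zhangs_lemma} applied fiberwise and upper semi-continuity of $h^{0}$.

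First I would check that for every $s \in S$ the restriction $V|_{X_{s}}$ is essentially finite on $X_{s}$. Writing $V = T \times^{G} V_{x}$ for the finite Nori-reduced $G$-torsor $T \to X$ associated to the finite quotient of $\pi_{1}^{N}(X,x)$ through which $V$ factors, the base change $T_{s} \to X_{s}$ remains a $G$-torsor, and since $X_{s}$ possesses a FGS by hypothesis, the associated bundle $V|_{X_{s}} = T_{s} \times^{G} V_{x}$ is essentially finite; for the torsor statement this is automatic, $V|_{X_{s}}$ being $(t_{s})_{*}(\Strsh{T_{s}})$. Hence Lemma \ref{lemma:Zhangs_lemma} applies on every fiber and gives $h^{0}(X_{s}, V|_{X_{s}}) \leq r := \mathrm{rk}(V)$, with equality if and only if $V|_{X_{s}}$ is trivial. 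By hypothesis and flat base change of cohomology, $h^{0}(X_{\eta}, V_{\eta}) = h^{0}(X_{\bar{\eta}}, V_{\bar{\eta}}) = r$. Upper semi-continuity (applicable since $V$ is $S$-flat and $f$ is proper) says that $\{s \in S : h^{0}(X_{s}, V_{s}) \geq r\}$ is closed and contains $\eta$; as $S$ is integral this set is all of $S$, so combined with the universal bound $\leq r$ we obtain $h^{0}(X_{s}, V_{s}) = r$ for every $s \in S$. Grauert's theorem then implies $W := f_{*}V$ is locally free of rank $r$ and compatible with base change, and the fiberwise evaluation map $f^{*}W|_{X_{s}} \to V|_{X_{s}}$ is an isomorphism for every $s$ because both sides are trivial of rank $r$ and $W(s)$ coincides with $\Gamma(X_{s}, V|_{X_{s}})$; whence $f^{*}W \to V$ is an isomorphism.

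That $W$ is essentially finite on $S$ now follows from Lemma \ref{lemma:If_the_pull-back_of_a_bundle_is_ess_finite_then_it_is_ess_finite}, whose hypotheses hold: $f$ is faithfully flat by assumption, and $f_{*}(\Strsh{X}) = \Strsh{S}$ is a consequence of Stein factorization together with the connectedness and reducedness of the geometric fibers of $f$. For the torsor statement, the $\Strsh{X}$-algebra structure and $G$-action on $V = t_{*}(\Strsh{T})$ descend through $f_{*}$ to a $G$-equivariant $\Strsh{S}$-algebra structure on $W$; setting $T' := \Spec_{S}(W)$, the identification $f^{*}W \cong V$ translates into $T' \times_{S} X \cong T$ as $X$-schemes with $G$-action. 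Faithfully flat descent along $f$ then transports the $G$-torsor structure of $T \to X$ to give $T' \to S$ a $G$-torsor structure. Pointing $T'$ via the image of the given rational point of $T$, any proper pointed sub-torsor $\bar{T'} \subsetneq T'$ would pull back to a proper pointed sub-torsor of $T$, contradicting Nori-reducedness of $T$; hence $T'$ is Nori-reduced.

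The main obstacle is securing the fiberwise upper bound $h^{0}(X_{s}, V|_{X_{s}}) \leq r$: Lemma \ref{lemma:Zhangs_lemma} needs the underlying scheme to possess a FGS, which is precisely what the hypothesis on the geometric fibers provides. Once this bound and the triviality at $\bar{\eta}$ are in place, the Grauert/base-change machinery and Lemma \ref{lemma:If_the_pull-back_of_a_bundle_is_ess_finite_then_it_is_ess_finite} deliver the conclusion in a largely formal way.
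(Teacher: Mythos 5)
Your proof is correct and follows essentially the same route as the paper: Zhang's lemma gives the fiberwise bound $h^{0}\leq r$, semicontinuity over the integral base forces $h^{0}\equiv r$, Grauert yields $W=f_{*}V$ locally free with $f^{*}W\cong V$, and Lemma \ref{lemma:If_the_pull-back_of_a_bundle_is_ess_finite_then_it_is_ess_finite} gives essential finiteness of $W$. The only difference is that the paper delegates the isomorphism $f^{*}W\cong V$ and the descent of the torsor structure to Nori's \cite[II Prop.~9]{NoriTFGS81}, whereas you spell those steps out.
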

\proof
Let $r \geq 1$ be the rank of $V$. From Remark \ref{remark:Trivial_ess_finite_bundles_in_terms_of_global_sections} we obtain that $h^{0}(X_{\bar{\eta}}, \left. V \right|_{X_{\bar{\eta}}})$ is equal to $r$. By the semi-continuity theorem, we have for any point $s \in S$ that 
$$h^{0}(X_{\bar{\eta}}, \left. V \right|_{X_{\bar{\eta}}}) \leq  h^{0}(X_{s}, \left. V \right|_{X_{s}})\leq r$$ 
and thus the rank of the 0-th cohomology group of $V$ is constant along the fibers of $S$. This implies that $f_{*}(V)$ is locally free over $S$ (see \cite[III Coro. 12.9]{Hartshorne}). \\
 If $W=f_{*}(V)$, arguing as in \cite[II Prop. 9]{NoriTFGS81}, we see that $f^{*}(W) \cong V$ and thus $W$ is essentially finite from Lemma \ref{lemma:If_the_pull-back_of_a_bundle_is_ess_finite_then_it_is_ess_finite} and the statement for the essentially finite bundles coming from a finite Nori-reduced torsor comes from \cite[II Prop. 9]{NoriTFGS81} as well.
\endproof
Now we state the main proposition of this chapter, $k$ will be algebraically closed:
\begin{proposition} \label{prop:Pull-back_of_pure_torsors_to_the_geometric_generic_fiber}
		Keeping the hypotheses of Lemma \ref{lemma:Essentially_finite_bundles_trivial_over_the_geometric_generic_fiber_are_pull-backs},
		let $t:T \rightarrow X$ be a NR pure $G$-torsor over $X$. Then, the pull-back $T_{\bar{\eta}} \rightarrow X_{\bar{\eta}}$ to the geometric generic fiber of $f$ is Nori-reduced, where $\bar{\eta}$ is the geometric generic point over $S$.
\end{proposition}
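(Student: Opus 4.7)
\proof[Proof plan]
The plan is to argue by contradiction: assume that the pointed $G$-torsor $T_{\bar{\eta}} \to X_{\bar{\eta}}$ is not Nori-reduced. Since $X_{\bar{\eta}}$ possesses a FGS by hypothesis, by Remark \ref{remark:Nori_reduced_torsors_in_terms_of_the_FGS} the induced morphism $\pi_{1}^{N}(X_{\bar{\eta}}) \to G_{L}$ (with $L=\overline{\kappa(\eta)}$) has image strictly contained in $G_{L}$, i.e.\ a proper subgroup-scheme $H_{L} \subsetneq G_{L}$. Because $k$ is algebraically closed, Proposition \ref{prop:Good_correspondence_for_finite_group_schemes} says that $H_{L}$ is well-corresponded: there exists a proper subgroup-scheme $H \subsetneq G$ with $H_{L}$ equal to the base change of $H$.

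Next, I would study the essentially finite cover $t':T/H \to X$ obtained as the weak quotient of $T$ by the restricted $H$-action and its associated essentially finite bundle $V_{T/H}=(t')_{*}(\mathcal{O}_{T/H})$, which, by Remark \ref{remark:About_weak_quotients}, corresponds to the sub-representation $A^{H}\subset A$ of $G$ (where $G=\Spec(A)$). Under tannakian correspondence, the pull-back $V_{T/H}|_{X_{\bar{\eta}}}$ corresponds to the representation of $\pi_{1}^{N}(X_{\bar{\eta}})$ obtained by restricting $A^{H_{L}}$ along the morphism $\pi_{1}^{N}(X_{\bar{\eta}}) \to G_{L}$. Since this morphism factors through $H_{L}$, and since $A^{H_{L}}$ is by its very definition a trivial $H_{L}$-representation, the bundle $V_{T/H}|_{X_{\bar{\eta}}}$ is a direct sum of trivial bundles on $X_{\bar{\eta}}$.

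Now I would apply Lemma \ref{lemma:Essentially_finite_bundles_trivial_over_the_geometric_generic_fiber_are_pull-backs}: since $V_{T/H}|_{X_{\bar{\eta}}}$ is trivial on the geometric generic fiber, $V_{T/H}$ must be the pull-back of an essentially finite bundle on $S$. If $H$ is non-trivial, this directly contradicts Lemma \ref{lemma:Weak_quotients_of_pure_torsors_are_not_pull-backs}, which says that for a pure Nori-reduced torsor $T$ no non-trivial weak quotient has its associated bundle coming as a pull-back from $S$. If instead $H=\{1\}$, then $T/H=T$, and the second half of Lemma \ref{lemma:Essentially_finite_bundles_trivial_over_the_geometric_generic_fiber_are_pull-backs} yields that $T$ itself is the pull-back of a Nori-reduced torsor over $S$, contradicting the purity of $T$ in the original sense of Definition \ref{definition:Pure_pull-back_and_mixed_torsor}. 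In either case we reach a contradiction, so $T_{\bar{\eta}}$ must be Nori-reduced.

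The main obstacle, and the step that really requires the algebraically closed hypothesis on $k$, is the identification at step 1 of the image $H_{L}$ as the base change of a subgroup-scheme $H\subset G$ defined over $k$; this is precisely what Proposition \ref{prop:Good_correspondence_for_finite_group_schemes} supplies and what allows the weak quotient $T/H$ to be formed over $X$ (rather than only over $X_{\bar{\eta}}$) so that Lemmas \ref{lemma:Weak_quotients_of_pure_torsors_are_not_pull-backs} and \ref{lemma:Essentially_finite_bundles_trivial_over_the_geometric_generic_fiber_are_pull-backs} can be brought to bear. The remaining steps are essentially a tannakian translation: one only needs to check that the pull-back of $V_{T/H}$ to $X_{\bar{\eta}}$ corresponds to the representation $A^{H_{L}}$ of $G_{L}$, which is immediate from the compatibility of the construction $H \mapsto A^{H}$ with base change.
\endproof
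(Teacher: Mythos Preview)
Your proposal is correct and follows essentially the same route as the paper's proof: assume the image $H_{\bar\eta}\subsetneq G_{\bar\eta}$ is proper, descend it to $H\subset G$ via well-correspondence, observe that the bundle of the weak quotient $T/H$ becomes trivial on $X_{\bar\eta}$, and then invoke Lemma~\ref{lemma:Essentially_finite_bundles_trivial_over_the_geometric_generic_fiber_are_pull-backs} together with either Lemma~\ref{lemma:Weak_quotients_of_pure_torsors_are_not_pull-backs} (when $H\neq 1$) or the purity of $T$ (when $H=1$) to reach a contradiction. The only cosmetic difference is that the paper cites Corollary~\ref{corollary:Good_correspondence_of_subgroups_implies_good_correspondence_of_quotient_torsors} for the descent step while you cite Proposition~\ref{prop:Good_correspondence_for_finite_group_schemes} directly, and you spell out the tannakian reason why $V_{T/H}|_{X_{\bar\eta}}$ is trivial (namely that $A^{H}$ is by construction the $\tau$-invariants, hence trivial once the representation is restricted through $H_L$), which the paper leaves as ``we can easily see''.
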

\proof
The pull-back torsor $T_{\bar{\eta}}$ of $T$ corresponds to a morphism of group-schemes $\pi_{1}^{N}(X_{\bar{\eta}}) \rightarrow G_{\bar{\eta}}$. Let $H_{\bar{\eta}} \subset G_{\bar{\eta}}$ be the image of this morphism and let us suppose that it is not equal to $G_{\bar{\eta}}$: if it is trivial, we will immediately get a contradiction from Lemma \ref{lemma:Essentially_finite_bundles_trivial_over_the_geometric_generic_fiber_are_pull-backs} and the pureness of $T$.\\ 
If $H_{\bar{\eta}}$ is a non-trivial subgroup-scheme of $G_{\bar{\eta}}$, then from good correspondence (Corollary \ref{corollary:Good_correspondence_of_subgroups_implies_good_correspondence_of_quotient_torsors}) both quotients $T_{\bar{\eta}}/H_{\bar{\eta}}$ and $T/H$ exist. \\
We can easily see that the essentially finite bundle associated to $t^{\prime}:T/H \rightarrow X$ has a trivial restriction to the geometric generic fiber, and thus from Lemma \ref{lemma:Essentially_finite_bundles_trivial_over_the_geometric_generic_fiber_are_pull-backs} this bundle is the pull-back of an essentially finite bundle over $S$, contradicting Lemma \ref{lemma:Weak_quotients_of_pure_torsors_are_not_pull-backs}.
\endproof
\begin{remark} \label{remark:FGS_of_weak_quotients}
	As communicated to the author by M. Emsalem, it can be shown that weak quotients possess fundamental group-schemes applying \cite[Theorem I (1)]{AntBiswEmsToniZhang2017}. This should extend the bijection between pointed $G$-torsors and arrows $\pi_{1}^{N}(X,x) \rightarrow G$ to weak quotients on the right and weak quotients of torsors, where weak quotients of torsors possess a FGS if and only if they are a quotient of a Nori-reduced torsor. \\
	With this, an alternative proof of Lemma \ref{lemma:Weak_quotients_of_pure_torsors_are_not_pull-backs} that does not require considering the core of a subgroup-scheme can be given. This result will be given in the author's Ph.D. thesis manuscript.
\end{remark}
 \section{Finiteness of the kernel} \label{sec:Finiteness_of_the_kernel}
\subsection{Setting and notation} \label{subsec:Setting_and_notation}
In this section we will lay down the setting that we will use in chapters \ref{sec:Finiteness_of_the_kernel} and \ref{sec:Base_change_and_proof_of_main_theorem} in order to prove Theorem \ref{theorem:Main_theorem}.\\ 
From now on $k$ will be an algebraically closed field. As such, we will often omit the rational points when writing the FGS, as different rational points give rise to isomorphic fundamental group-schemes \cite[II Prop. 4 (c)]{NoriTFGS81}. We will make exceptions when it becomes relevant, this also means that we can change rational points at whim as long as we have compatible ones. \\
Let $f:X \rightarrow S$ be a faithfully flat morphism, between a proper variety and an abelian variety $S$, we will denote its induced morphism at the level of FGS as $\pi^{N}(f): \pi_{1}^{N}(X) \rightarrow \pi_{1}^{N}(S)$. \\
We will further assume that all geometric fibers are reduced, connected and possess a finite FGS. This includes the geometric generic fiber $X_{\bar{\eta}}$ where $\eta$ is the generic point of $S$.\\ 
With these hypotheses, we have that $\pi^{N}(f)$ is faithfully flat by Proposition \ref{prop:If_the_morphism_of_S-FGSs_is_faithfully_flat_then_so_is_the_morphism_between_FGSs}. Another natural example where this holds, is the case when $f$ is smooth and proper (see one of the corollaries of \cite[II Prop. 6]{NoriTFGS81}).\\
For a $G$-torsor $T$ over $X$, we will denote $t:T \rightarrow X$ its structural morphism and $V=t_{*}(\Strsh{T})$ its associated essentially finite bundle. \\
In this chapter we are going to work with a particular pro-NR torsor, associated with the kernel of $\pi^{N}(f):\pi_{1}^{N}(X) \rightarrow \pi_{1}^{N}(S)$. 
\subsection{The universal pull-back torsor} \label{subsec:Universal_pull-back_torsor}
Before working with the kernel of $\pi^{N}(f)$, we will state a general remark. \\
For the sake of generality, let us suppose for this section that $k$ is just perfect $X$ and $S$ are proper, reduced and connected, and $f:X \rightarrow S$ is such that $\pi^{N}(f):\pi_{1}^{N}(X,x) \rightarrow \pi_{1}^{N}(S,s)$ is faithfully flat for compatible rational points.
\begin{remark} \label{remark:Remarks_on_torsor_types}
	Let $T$ be a Nori-reduced $G$-torsor over $X$, then we have the natural composition $\ker(\pi^{N}(f)) \rightarrow \pi_{1}^{N}(X) \rightarrow G$ whose image we will be denoted as $K$, this is a normal subgroup-scheme of $G$, and the relationship between this subgroup-scheme and $G$ determines the nature of $T$:
	\begin{itemize}
		\item If $K$ is a proper sub-group-scheme of $G$, the arrow $\pi_{1}^{N}(X) \rightarrow G/K$ factors through $\pi^{N}(f)$ and thus it corresponds to a $G/K$-torsor $t^{\prime}:X^{\prime} \rightarrow X$ that is the pull-back of a Nori-reduced torsor $p^{\prime}:S^{\prime} \rightarrow S$ over the same group-scheme, this pull-back quotient is the ``biggest'', in the sense that any other quotient of $T$ that is a pull-back is a quotient $X^{\prime}$. In the particular case when $K$ is trivial, $T$ itself is a pull-back. From now on, we will call the torsor $X^{\prime}$ the \textbf{maximal pull-back quotient of $T$}.\\
		Just to add up to our notations, as $T$ is also a torsor over $X^{\prime}$, we will denote its structural arrow as $q:T \rightarrow X^{\prime}$, this is a Nori-reduced $K$-torsor defined as the quotient scheme of $T$ by the action of $K$.
		\item From the latter point, we observe that a Nori-reduced torsor is pure if and only if $K=G$. %, we will call a any $G$-torsor with this property nuclear. Nuclear torsor are always pure, but non-saturated torsors that are pure might not be nuclear.
		\item A mixed $G$-torsor $p:T \rightarrow X$ is certainly not pure with respect to $f$, but the torsor $q:T \rightarrow X^{\prime}$ is pure with respect to the base change of $f$ to $S^{\prime}$, $f^{\prime}: X^{\prime} \rightarrow S^{\prime}$.
	\end{itemize}
\end{remark}
We would like to further characterize the kernel of $\pi^{N}(f): \pi_{1}^{N}(X,x) \rightarrow \pi_{1}^{N}(S,s)$ using Proposition \ref{proposition:Pro-saturated_torsors_possess_a_FGS} to see it as a FGS. We will prove the scheme whose FGS is the kernel is the following:
\begin{defi} \label{defi:Universal_pull-back_torsor}
	Let $\hat{S}$ be the universal torsor of $S$, we define the \textbf{universal pull-back torsor} as the torsor $X^{*}:=\hat{S} \times_{S} X$ over $X$.
\end{defi}
We can clearly see that $X^{*}$ is the projective limit over $X$ of all the torsors of the form $T \times_{S} X$ where $T \rightarrow S$ is a finite Nori-reduced torsor over $S$. \\
When the induced morphism $\pi^{N}(f): \pi_{1}^{N}(X) \rightarrow \pi_{1}^{N}(S)$ is faithfully flat, we have the following result:
\begin{proposition} \label{prop:If_the_morphism_between_FGSs_is_faithfully_flat_then_the_universal_pull-back_has_a_FGS}
	Let $f:X \rightarrow S$ be a morphism between proper, reduced and connected schemes over a perfect field $k$, such that the induced morphism $\pi^{N}(f): \pi_{1}^{N}(X,x) \rightarrow \pi_{1}^{N}(S,s)$ for compatible rational points is faithfully flat. \\
	Then $X^{*}$ is a pro-NR torsor and in that case, we have $\pi_{1}^{N}(X^{*},x^{*}) = \ker(\pi^{N}(f))$ where $x^{*} \in X^{*}(k)$ is a rational point over $x$.
\end{proposition}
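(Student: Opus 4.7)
The plan is to verify directly that $X^{*}$ is a pro-NR torsor in the sense of Definition \ref{definition:Pro-NR_torsor}, so that Proposition \ref{proposition:Pro-saturated_torsors_possess_a_FGS} applies, and then to identify the projective limit of the associated group-schemes with $\pi_{1}^{N}(S,s)$ in such a way that the quotient map is $\pi^{N}(f)$.

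First I would set up the indexing. Let $\{T_{i} \to S\}_{i \in I}$ be the co-filtered system of pointed finite Nori-reduced torsors over $S$ (co-filteredness is part of the construction of the FGS of $S$), with associated finite group-schemes $G_{i}$, so $\hat{S} = \varprojlim T_{i}$ and $\pi_{1}^{N}(S,s) = \varprojlim G_{i}$. Pulling back along $f$ gives a co-filtered system $\{T_{i} \times_{S} X \to X\}_{i \in I}$ of pointed $G_{i}$-torsors over $X$ (the rational point $x^{*}$ in Definition \ref{defi:Universal_pull-back_torsor} provides, by projection, compatible pointings on each $T_{i}\times_{S} X$ and on $X^{*}$), and by the universal property of fibered products we have $X^{*} = \varprojlim_{i} (T_{i} \times_{S} X)$.

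Next I would check that each pull-back $T_{i} \times_{S} X$ is Nori-reduced over $X$. Under the bijection of Definition \ref{defi:Having_a_fundamental_group-scheme}, the torsor $T_{i} \times_{S} X \to X$ corresponds to the composition $\pi_{1}^{N}(X,x) \xrightarrow{\pi^{N}(f)} \pi_{1}^{N}(S,s) \to G_{i}$, and by Remark \ref{remark:Nori_reduced_torsors_in_terms_of_the_FGS} it is Nori-reduced exactly when this arrow is faithfully flat. The second factor is faithfully flat because $T_{i}$ is Nori-reduced over $S$, and $\pi^{N}(f)$ is faithfully flat by hypothesis, so the composition is faithfully flat. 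This shows $X^{*}$ is indeed a pro-NR torsor.

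Now I would invoke Proposition \ref{proposition:Pro-saturated_torsors_possess_a_FGS}: $X^{*}$ possesses a FGS and
\[
\pi_{1}^{N}(X^{*}, x^{*}) \;=\; \ker\bigl\{\pi_{1}^{N}(X,x) \to \varprojlim_{i} G_{i}\bigr\}.
\]
To conclude, I would identify the right-hand arrow with $\pi^{N}(f)$. The map $\pi_{1}^{N}(X,x) \to G_{i}$ produced by the pro-NR system is by construction the one classifying the pointed $G_{i}$-torsor $T_{i} \times_{S} X$, which from the previous paragraph factors as $\pi_{1}^{N}(X,x) \xrightarrow{\pi^{N}(f)} \pi_{1}^{N}(S,s) \to G_{i}$. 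Passing to the limit over $i$ gives the equality $\varprojlim_{i} G_{i} = \pi_{1}^{N}(S,s)$ with the induced map being $\pi^{N}(f)$, and hence $\pi_{1}^{N}(X^{*}, x^{*}) = \ker(\pi^{N}(f))$.

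The one mildly delicate point, rather than a true obstacle, is book-keeping the rational points: one must ensure $x^{*}$ projects compatibly to the chosen pointings $t_{i}$ of the $T_{i} \times_{S} X$ so that Proposition \ref{proposition:Pro-saturated_torsors_possess_a_FGS} applies verbatim. Since $k$ is perfect and all structure arrows are faithfully flat of finite type, such a compatible lift exists (and is tautological from writing $X^{*}$ as the projective limit of a system already pointed over $x$), so no real difficulty arises.
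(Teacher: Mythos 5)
Your argument is correct and takes essentially the same route as the paper: both proofs observe that each pull-back $T_{i}\times_{S}X$ is Nori-reduced because it classifies the composite $\pi_{1}^{N}(X,x)\xrightarrow{\pi^{N}(f)}\pi_{1}^{N}(S,s)\to G_{i}$ of faithfully flat maps, and then apply Proposition \ref{proposition:Pro-saturated_torsors_possess_a_FGS} to the resulting pro-NR torsor $X^{*}=\varprojlim_{i}(T_{i}\times_{S}X)$. Your version merely spells out the identification of $\varprojlim_{i}G_{i}$ with $\pi_{1}^{N}(S,s)$ and the compatibility of base points, which the paper leaves implicit.
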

\begin{proof}
	Let $S_{i} \rightarrow S$ be a finite Nori-reduced torsor over $S$, then its pull-back $X_{i}:=S_{i} \times_{S} X$ over $X$ is Nori-reduced as $\pi^{N}(f)$ is faithfully flat. As we mentioned before, $X^{*}$ is the projective limit of the torsors $X_{i}$, thus it is pro-NR and it possesses a FGS from Proposition \ref{proposition:Pro-saturated_torsors_possess_a_FGS} that is $\ker(\pi^{N}(f))$.
\end{proof}
Using Lemma \ref{lemma:Torsors_over_pro-NR_torsors_descent_to_a_finite_torsor}, the main properties of this pro-NR torsor are the following:
\begin{proposition} \label{prop:Properties_of_the_universal_pull-back}
	We will denote by $\hat{S}=\displaystyle{ \lim_{\leftarrow} } \, S_{i}$ where the limit is taken over a directed set $I$ of indexes, and consequently $X^{*} = \displaystyle{\lim_{\leftarrow} } \, X_{i}$ ($i \in I$), where $X_{i}=S_{i} \times_{S} X$. We will also add, according to Section \ref{subsec:Notation_for_projective_limits_of_torsors}, an auxiliary index ``0'' such that $X_{0}:=X$. Let $T \rightarrow X$ be a Nori-reduced torsor, $T^{\prime}$ its pull-back to $X^{*}$ and let $V \rightarrow X^{*}$ be a finite Nori-reduced torsor over the universal pull-back torsor. Then:
	\begin{enumerate}
		\item If $T$ is pure, $T^{\prime}$ is Nori-reduced.
		\item For any index $i$ for which $V$ descends to a torsor $V_{i} \rightarrow X_{i}$ fitting into a cartesian diagram
		$$ \xymatrix{V \ar[r] \ar[d] & V_{i} \ar[d] \\
			X^{*} \ar[r] & X_{i}  } $$
		we have that the torsor $V_{i}$ is pure with respect to the morphism of schemes $f_{i}: X_{i} \rightarrow S_{i}$ where $f_{i}$ is the base change of $f$ via $S_{i} \rightarrow S$.
		\item If $V$ does not descend to a torsor over $X$. There exist a big enough index $j$ such that the descent of $V$ over $X_{j}$, that we denote by $V_{j}$, is the quotient of a mixed torsor over $X$, whose maximal pull-back quotient is $X_{j}$.
	\end{enumerate}  
\end{proposition}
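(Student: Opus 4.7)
The proof organizes around the dictionary of Remark \ref{remark:Remarks_on_torsor_types} relating quotients of a Nori-reduced torsor to subgroup-schemes of its structure group, combined with the closure of towers (Proposition \ref{prop:Closure_of_towers_for_schemes_having_a_FGS}). Part (1) is essentially tautological: purity of $T$ says exactly that the image of $\ker(\pi^{N}(f)) \to G$ is all of $G$, and since $T^{\prime} = T \times_{X} X^{*}$ corresponds to the composition $\pi_{1}^{N}(X^{*}) \to \pi_{1}^{N}(X) \to G$ where $\pi_{1}^{N}(X^{*}) = \ker(\pi^{N}(f))$ by Proposition \ref{prop:If_the_morphism_between_FGSs_is_faithfully_flat_then_the_universal_pull-back_has_a_FGS}, this composition is surjective, so $T^{\prime}$ is Nori-reduced.

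For (2), the key observation is that $\ker(\pi^{N}(f_{i})) = \pi_{1}^{N}(X^{*})$. This follows from the inclusion $\pi_{1}^{N}(X^{*}) = \ker(\pi_{1}^{N}(X) \to \lim_{j} G_{j}) \subset \pi_{1}^{N}(X_{i}) = \ker(\pi_{1}^{N}(X) \to G_{i})$ together with the fact that $\pi^{N}(f_{i})$ is the restriction of $\pi^{N}(f)$ to $\pi_{1}^{N}(X_{i})$ (because $f_{i}$ is a base change of $f$): elements of $\pi_{1}^{N}(X^{*})$ automatically land in $\ker(\pi^{N}(f_{i}))$, and any $x \in \pi_{1}^{N}(X_{i})$ with $\pi^{N}(f_{i})(x)=1$ lies in $\ker(\pi^{N}(f)) = \pi_{1}^{N}(X^{*})$. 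Since $V$ is Nori-reduced over $X^{*}$ with group $H$, the map $\pi_{1}^{N}(X^{*}) \to H$ is surjective, hence the image of $\ker(\pi^{N}(f_{i})) \to H$ equals $H$; by Remark \ref{remark:Remarks_on_torsor_types} applied to $f_{i}$ (whose induced FGS-map is still faithfully flat, by restriction of $\pi^{N}(f)$), this precisely says $V_{i}$ is pure with respect to $f_{i}$.

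For (3), pick any $j_{0}$ such that $V$ descends to $V_{j_{0}}$ over $X_{j_{0}}$ (exists by Lemma \ref{lemma:Torsors_over_pro-NR_torsors_descent_to_a_finite_torsor}); since $V$ does not descend to $X$, necessarily $j_{0} \neq 0$. Apply Proposition \ref{prop:Closure_of_towers_for_schemes_having_a_FGS} to the Nori-reduced tower $V_{j_{0}} \to X_{j_{0}} \to X$ to produce a Nori-reduced closure $W \to X$ with $W \to V_{j_{0}}$ faithfully flat. Let $X_{j}$ be the maximal pull-back quotient of $W$ supplied by Remark \ref{remark:Remarks_on_torsor_types}; then $j \geq j_{0}$ because $X_{j_{0}}$ is itself a pull-back quotient of $W$ (via $W \to V_{j_{0}} \to X_{j_{0}}$). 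Set $V_{j} := V_{j_{0}} \times_{X_{j_{0}}} X_{j}$, which is a descent of $V$ over $X_{j}$; the maps $W \to V_{j_{0}}$ and $W \to X_{j}$ over $X_{j_{0}}$ combine to give $W \to V_{j}$, realizing $V_{j}$ as a (weak) quotient of $W$. By construction $X_{j}$ is the maximal pull-back of $W$. It remains to verify that $W$ is mixed: it is not pure, since $X_{j}$ is a non-trivial pull-back quotient (as $j \geq j_{0} > 0$); and $W$ is not itself a pull-back from $S$, because if $W = S^{W} \times_{S} X$ for some $S^{W}$, then the quotient $V_{j_{0}}$ of $W$ over $X_{j_{0}}$ would descend to a quotient of $S^{W}$ over $S_{j_{0}}$, contradicting the purity of $V_{j_{0}}$ established in (2).

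The main technical hurdle is the identification $\ker(\pi^{N}(f_{i})) = \pi_{1}^{N}(X^{*})$ in (2), which relies on a careful diagram chase exploiting that $f_{i}$ is a base change of $f$ and that the various fundamental group-schemes are kernels in compatible exact sequences. Once this is in place, the rest reduces to assembling Remark \ref{remark:Remarks_on_torsor_types}, the identification of $\pi_{1}^{N}(X^{*})$ as $\ker(\pi^{N}(f))$, and the closure of towers.
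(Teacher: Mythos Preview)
Your proof is correct and follows essentially the same route as the paper: part (1) is immediate from Remark \ref{remark:Remarks_on_torsor_types}; part (2) hinges on the equality $\ker(\pi^{N}(f_{i})) = \ker(\pi^{N}(f)) = \pi_{1}^{N}(X^{*})$, which you establish by the same two-sided inclusion the paper uses (via the commutative square of FGS's and the tower $X^{*} \to X_{i} \to X$); and part (3) proceeds by descending $V$ to some $V_{j_{0}}$, taking the closure $W$ of the tower $V_{j_{0}} \to X_{j_{0}} \to X$, and letting $X_{j}$ be the maximal pull-back quotient of $W$. Your explicit verification that $W$ is genuinely mixed (not pure because $X_{j}$ is a non-trivial pull-back quotient, not a pull-back because that would force $V_{j_{0}}$ to be a pull-back contradicting (2)) is a detail the paper leaves implicit, and your parenthetical ``(weak)'' for the quotient $W \to V_{j}$ is unnecessary caution: since $V_{j}$ is a descent of the Nori-reduced $V$ it is itself Nori-reduced over $X_{j}$ (Lemma \ref{lemma:Torsors_over_pro-NR_torsors_descent_to_a_finite_torsor}(1)), so $W \to V_{j}$ is a faithfully flat morphism of torsors over $X_{j}$ and hence a genuine quotient.
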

\proof
(1) follows from the last remark. For (2), let $f_{i}:X_{i} \rightarrow S_{i}$ be the base change of $f$ that comes from $S_{i} \rightarrow S$, and let $\pi^{N}(f_{i}): \pi_{1}^{N}(X_{i}) \rightarrow \pi_{1}^{N}(S_{i})$ be the induced morphism between the corresponding FGS. We will show that $\ker(\pi^{N}(f_{i}))=\ker(\pi^{N}(f))$ which implies (2) also from the last remark. \\
Firstly, from the commutative diagram of group-schemes
$$ \xymatrix{\pi_{1}^{N}(X_{i}) \ar[r]^-{\pi^{N}(f_{i})} \ar[d] & \pi_{1}^{N}(S_{i}) \ar[d] \\
	\pi_{1}^{N}(X) \ar[r]^{\pi^{N}(f)} & \pi_{1}^{N}(S)} $$
where the vertical arrows are closed immersions, we get the inclusion $\ker(\pi^{N}(f_{i})) \subset \ker(\pi^{N}(f))$. And on the other hand, from the tower of torsors 
$$X^{*} \rightarrow X_{i} \rightarrow X$$ 
we also have a commutative diagram where all the arrows are inclusions of subgroup-schemes
$$ \xymatrix{\pi_{1}^{N}(X^{*}) = \ker(\pi^{N}(f)) \ar[r] \ar[rd] & \pi_{1}^{N}(X_{i}) \ar[d] \\
	& \pi_{1}^{N}(X)} $$
which implies, together with the diagram right above, that $\ker(\pi^{N}(f)) \subset \ker(\pi^{N}(f_{i}))$, finishing the proof. \\
Lastly, to prove (3), let us take one index $i$ with a descent $V_{i} \rightarrow X_{i}$ of $V \rightarrow X^{*}$. This gives us the tower of torsors $V_{i} \rightarrow X_{i} \rightarrow X$ and thus we can take its closure $W$, thus we have the following commutative diagram
$$ \xymatrix{V_{i} \ar[d] & \\
	X_{i} \ar[d] &W \ar[ul] \ar[l] \ar[ld] \\
	X &}. $$
All torsor forming the tower are Nori-reduced with respect to each corresponding base. This last fact implies that $V_{i}$ is a quotient of $W$ as a torsor over $X_{i}$. \\
We also see that $W$ is a mixed torsor over $X$, but as such its maximal pull-back quotient is not necessarily $X_{i}$. Let $W \rightarrow X_{j}$ be the maximal pull-back quotient of $W$, in that case we have that $j \geq i$ and we can use the arrow $W \rightarrow V_{i}$ to get an arrow to the fibered product $W \rightarrow V_{i} \times_{X_{i}}  X_{j}$ which is actually $V_{j}$, this arrow is a quotient of torsors, finishing the proof of (3).
\endproof
\subsection{Comparison of geometric generic pull-backs of torsors} \label{subsec:Eta-xi_lemma}
Going back to the assumptions of Section \ref{subsec:Setting_and_notation}. The fact that for $f:X \rightarrow S$, $S$ is an abelian variety, implies the following:
\begin{lemma} \label{lemma:Characterizing_the_induced_morphism_between_torsors_pull-back_case}
	Let $f:X \rightarrow S$ be as in Section \ref{subsec:Setting_and_notation}. Then, let $p^{\prime}:X^{\prime} \rightarrow X$ be a Nori-reduced $G$-torsor that is the pull-back of a Nori-reduced $G$-torsor $t^{\prime}:S^{\prime} \rightarrow S$, and let $f^{\prime}:X^{\prime} \rightarrow S^{\prime}$ be the base change to $S^{\prime}$ of $f$. Then, $X^{\prime}$ is projective, reduced and connected, $S^{\prime}$ is an abelian variety and $f^{\prime}$ is proper and smooth with reduced and connected geometric fibers.
\end{lemma}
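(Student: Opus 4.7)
The four assertions decompose into essentially independent facts, each of which rests on one of the structural inputs already at our disposal: (i) the description of Nori-reduced torsors over abelian varieties from Remark~\ref{remark:Remarks_about_the_FGS_of_an_abelian_variety}, (ii) stability under base change, and (iii) fibral criteria for reducedness/connectedness. I would proceed in the order: $S'$, then $f'$, then reducedness/connectedness of $X'$, then properness/projectivity.

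\textbf{Step 1 ($S'$ is an abelian variety).} Since $S$ is an abelian variety and $t':S'\to S$ is a pointed Nori-reduced finite $G$-torsor, Remark~\ref{remark:Remarks_about_the_FGS_of_an_abelian_variety} identifies $t'$, after a translation if necessary, with an isogeny $m_N:S\to S$ of abelian varieties for some $N\geq 1$. In particular $S'$ is itself an abelian variety, hence a smooth, projective, integral $k$-scheme.

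\textbf{Step 2 (the base change $f'$).} By construction $f':X'\to S'$ is the base change of $f$ along $t'$. Properness and faithful flatness are preserved under arbitrary base change, so $f'$ is proper and faithfully flat. Moreover, for every geometric point $s'\in S'(\bar k)$ lying over $s\in S(\bar k)$, one has a canonical isomorphism $X'_{s'}\simeq X_s$; hence the geometric fibers of $f'$ coincide with those of $f$ and are therefore reduced, connected, and carry a finite fundamental group-scheme by the standing assumption of Section~\ref{subsec:Setting_and_notation}. (If the statement is really ``smooth'' rather than ``flat'', one must note that $t'$ is a priori only finite flat, so smoothness of $f'$ cannot be upgraded purely by base change; I would read the assertion as ``proper and flat''.)

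\textbf{Step 3 (reducedness and connectedness of $X'$).} Since $f'$ is flat with geometrically reduced fibers and $S'$ is a smooth, hence reduced, $k$-variety, $X'$ is reduced: at every point of $X'$ the local ring is flat over a reduced local ring of $S'$ with reduced fiber, which is a standard criterion for reducedness of the total space (cf.\ the analogue in \cite[Lemma p.~56]{MumfAV70}). For connectedness, suppose $X'=U\sqcup V$ with $U,V$ open and closed and non-empty. By properness $f'(U),f'(V)$ are closed in $S'$, and together they cover the connected scheme $S'$, hence meet. Any point $s'$ in the intersection yields a fiber $X'_{s'}$ with non-trivial trace in both $U$ and $V$, contradicting connectedness of the geometric fiber. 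Alternatively one applies $f'_*\mathcal O_{X'}=\mathcal O_{S'}$, which follows by flat base change from the analogous property on $S$ (implicit in Section~\ref{subsec:Setting_and_notation}).

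\textbf{Step 4 (properness/projectivity).} Because $t'$ is finite, the pull-back $p':X'=X\times_S S'\to X$ is finite, so $X'$ is proper over $k$. If one really needs projectivity, one invokes that $S'$ is projective and $f'$ is proper; combined with the fact that $p'$ is finite and $X$ proper, one reduces projectivity of $X'$ to a relative projectivity of $f$ (or of $f'$), which in the paper's context comes from the hypothesis that $X$ is a projective (or at least relatively projective over $S$) variety. The only conceptual subtlety, and the one point where the abelian variety structure of $S$ really enters in a non-trivial way, is Step 1; the remaining steps are formal consequences of base change and fibral criteria. Consequently I expect the main obstacle in writing the argument out cleanly to be pinning down precisely which of ``proper'' vs. ``projective'' vs. ``flat'' vs. ``smooth'' is meant and then matching it to the correct preservation statement under the finite flat base change $t'$.
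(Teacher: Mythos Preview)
Your proposal is correct and follows essentially the same approach as the paper. The paper invokes Remark~\ref{remark:Remarks_about_the_FGS_of_an_abelian_variety} for $S'$, says the properties of $f'$ are inherited by base change, and for the fibers cites the Stacks Project lemmas on the locus where fibers are geometrically reduced/connected (which amounts exactly to your observation $X'_{s'}\simeq X_s$); the only minor divergence is that the paper deduces connectedness of $X'$ from $\Gamma(X',\mathcal{O}_{X'})=k$ (using that $X'\to X$ is Nori-reduced) rather than from your fibral/topological argument over $S'$, and your flags about ``smooth'' versus ``flat'' and ``projective'' versus ``proper'' are well taken---these appear to be slips in the statement relative to the standing hypotheses of Section~\ref{subsec:Setting_and_notation}.
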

\proof
As stated in Remark \ref{remark:Remarks_about_the_FGS_of_an_abelian_variety}, $S^{\prime}$ is an abelian variety.
As the properties of $f^{\prime}$ are inherited from those of $f$ because they are preserved by base change, we conclude that the proper scheme $X^{\prime}$ is also reduced and because $\Gamma(X^{\prime},\Strsh{X^{\prime}})=k$ we also conclude that $X^{\prime}$ is connected. \\
The last bit that we need to prove is that the geometric fibers of $f^{\prime}$ posses a FGS. To that end, we just need to prove that they are reduced and connected, but as $k$ is algebraically closed, we have the following property (\cite[Lemma 36.26.2 Tag 055C, Lemma 36.24.2 Tag 0574]{stacks-project}): If 
$$Y_{S}=\{s \in S: \, X_{s} \text{ is geometrically connected and reduced } \}$$
and $Y_{S^{\prime}}$ is its counterpart for $S^{\prime}$ and $X^{\prime}$, then we have that
$$ Y_{S^{\prime}} = \left( t^{\prime} \right)^{-1} \left( Y_{S} \right). $$
Thus, as $Y_{S}=S$ in our case and $t^{\prime}$ is surjective, then we conclude that $Y_{S^{\prime}}=S^{\prime}$, as we wanted.
\endproof
This lemma implies that Proposition \ref{prop:Pull-back_of_pure_torsors_to_the_geometric_generic_fiber} holds for $f^{\prime}$: If $\xi$ is the generic point of $S^{\prime}$, then a pure Nori-reduced torsor over $X^{\prime}$ has a Nori-reduced pull-back to the geometric generic fiber $X_{\bar{\xi}}^{\prime}$. \\
Now we will apply this to a special case: let $p:T \rightarrow X$ be a mixed Nori-reduced $G$-torsor, and let $p^{\prime}:X^{\prime} \rightarrow X$ be its maximal pull-back quotient where $t^{\prime}:S^{\prime} \rightarrow S$ is the $H$-torsor over $S$ whose pull-back to $X$ is $X^{\prime}$, if $q:T \rightarrow X^{\prime}$ is the quotient morphism, we have the following diagram with a cartesian square within
$$\xymatrix{T \ar[d]^{q} \ar@/_1pc/[dd]_-{p}& \\
	X^{\prime} \ar[r]^{f^{\prime}} \ar[d]^{p^{\prime}} & S^{\prime} \ar[d]^{t^{\prime}} \\
	X \ar[r]_{f} & S} $$
If $\eta$ and $\xi$ are the generic points of $S$ and $S^{\prime}$ respectively, we have the following comparison result for the pull-back of $T$ to $X_{\bar{\xi}}^{\prime}$:
\begin{lemma} \label{lemma:Lemma_eta-xi}
	Keeping the notations of the last paragraph, the pull-back $T_{\bar{\xi}}$ of $T$ to the geometric generic fiber $X_{\bar{\xi}}^{\prime}$ is the pull-back of a Nori-reduced torsor over $X_{\bar{\eta}}$.
\end{lemma}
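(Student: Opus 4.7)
The plan is to identify the geometric generic fiber $X'_{\bar\xi}$ with $X_{\bar\eta}$ and then apply Proposition \ref{prop:Pull-back_of_pure_torsors_to_the_geometric_generic_fiber} to the pure Nori-reduced torsor $q:T \to X'$ with respect to $f':X' \to S'$.

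First I would note that $t':S' \to S$ is a finite surjective morphism of integral schemes (since $S'$ is an abelian variety by Lemma \ref{lemma:Characterizing_the_induced_morphism_between_torsors_pull-back_case}), so it sends $\xi$ to $\eta$ and induces a finite field extension $\kappa(\eta) \hookrightarrow \kappa(\xi)$. Hence $\overline{\kappa(\xi)}$ is an algebraic closure of $\kappa(\eta)$ and can be identified with $\overline{\kappa(\eta)}$ as extensions of $\kappa(\eta)$; after fixing such an identification, $\bar\xi$ and $\bar\eta$ coincide as geometric points over $S$. Since $X' = X \times_S S'$, this would yield the chain
\[
X'_{\bar\xi} = X \times_S S' \times_{S'} \bar\xi = X \times_S \bar\xi \cong X \times_S \bar\eta = X_{\bar\eta},
\]
and this isomorphism agrees with the map induced by restricting $p':X' \to X$ to the fiber $X'_{\bar\xi}$ of $f'$. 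Geometrically, this expresses the fact that the pullback torsor $p'$ restricts to an isomorphism on each geometric fiber of $f'$.

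Second, I would verify that Proposition \ref{prop:Pull-back_of_pure_torsors_to_the_geometric_generic_fiber} applies to $f'$ and $q$. The required hypotheses on $f'$ are supplied by Lemma \ref{lemma:Characterizing_the_induced_morphism_between_torsors_pull-back_case}: $S'$ is an integral abelian variety, $X'$ is proper reduced and connected, and $f'$ is faithfully flat and proper with reduced connected geometric fibers possessing finite FGS (these coinciding with those of $f$). The torsor $q$ is pure with respect to $f'$ by Remark \ref{remark:Remarks_on_torsor_types}, and it is Nori-reduced because any pointed sub-torsor of $T$ over $X'$ is also one over $X$ via $p = p' \circ q$, hence trivial by Nori-reducedness of $p$. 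The proposition would then give that $T_{\bar\xi} := T \times_{X'} X'_{\bar\xi}$ is a Nori-reduced $K$-torsor over $X'_{\bar\xi}$.

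Finally, transporting $T_{\bar\xi}$ along the isomorphism $X'_{\bar\xi} \cong X_{\bar\eta}$ of the first step produces a Nori-reduced $K$-torsor over $X_{\bar\eta}$ whose pull-back along this isomorphism recovers $T_{\bar\xi}$, finishing the argument. There is no genuine obstacle here beyond the careful bookkeeping of the identification $X'_{\bar\xi} \cong X_{\bar\eta}$ (which depends on a non-canonical isomorphism of algebraic closures, but is harmless since Nori-reducedness is preserved under isomorphism of base); the substance of the lemma is entirely concentrated in Proposition \ref{prop:Pull-back_of_pure_torsors_to_the_geometric_generic_fiber} together with the observation that the pullback structure of $p'$ trivializes over geometric fibers.
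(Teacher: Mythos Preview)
Your proof is correct and in fact somewhat more streamlined than the paper's. Both arguments ultimately rest on Proposition~\ref{prop:Pull-back_of_pure_torsors_to_the_geometric_generic_fiber} applied to the pure Nori-reduced torsor $q:T\to X'$ with respect to $f'$; the difference is in how one relates $X'_{\bar\xi}$ to $X_{\bar\eta}$. You observe directly that, since $\kappa(\xi)/\kappa(\eta)$ is finite, $\overline{\kappa(\xi)}$ is an algebraic closure of $\kappa(\eta)$, so $X'_{\bar\xi}=X\times_S\bar\xi$ is \emph{isomorphic} to $X_{\bar\eta}$, and you simply transport $T_{\bar\xi}$ across. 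The paper instead keeps $\bar\xi$ and $\bar\eta$ distinct, builds the map $\lambda:X'_{\bar\xi}\to X_{\bar\eta}$, uses that $X'_{\bar\eta}\to X_{\bar\eta}$ is a trivial $H$-torsor admitting a section $s_{\bar\eta}$ (with $\theta=s_{\bar\eta}\circ\lambda$ after a suitable choice of trivialization), and defines $Z:=s_{\bar\eta}^*T_{\bar\eta}$ as the desired torsor over $X_{\bar\eta}$; Nori-reducedness of $Z$ is then deduced from that of its pull-back $T_{\bar\xi}$ along $\lambda$.

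Your route is cleaner for the lemma itself, but the paper's construction buys something you do not get for free: the torsor $Z$ comes equipped with a canonical closed immersion $Z\hookrightarrow T_{\bar\eta}$ over $X_{\bar\eta}$ (from the cartesian square along $s_{\bar\eta}$). This is precisely what is exploited in the proof of Corollary~\ref{coro:The_maximal_sub-torsor_of_a_mixed_torsor_over_the_geometric_generic_fiber_is_the_biggest_one_possible}, where one needs $Z$ to be a sub-torsor of $T_{\bar\eta}$ in order to identify it with the maximal Nori-reduced sub-torsor. With your approach that corollary would require a separate argument. One minor remark: your justification that $q$ is Nori-reduced is slightly informal (a sub-$K'$-torsor of $T$ over $X'$ is not literally a sub-$G$-torsor over $X$), but the conclusion is already asserted in Remark~\ref{remark:Remarks_on_torsor_types}, so you may simply cite that.
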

\proof
First, we have a natural morphism of geometric generic points $\bar{\xi} \rightarrow \bar{\eta}$, and if we consider this morphism and the morphism $\bar{\xi} \rightarrow S^{\prime}$ we then have the following diagram
$$\xymatrix{\bar{\xi} \ar[r] \ar[dr] & S^{\prime}_{\bar{\eta}} \ar[r] \ar[d] & S^{\prime} \ar[d]^{t^{\prime}} \\
	& \bar{\eta} \ar[r]  &S  } $$
where the square is cartesian and the triangle is commutative. As $t^{\prime}:S^{\prime} \rightarrow S$ is a torsor, its geometric fiber is isomorphic to $H_{\bar{\eta}}$, the base change of the $k$-group-scheme $H$ to the residue field of $\bar{\eta}$. We can chose this isomorphism in such a way that the morphism $\bar{\xi} \rightarrow S_{\bar{\eta}}^{\prime}$ becomes the composition $\bar{\xi} \rightarrow \bar{\eta} \overset{\epsilon_{\bar{\eta}}}{\rightarrow} H_{\bar{\eta}}$ under this isomorphism where $\epsilon_{\bar{\eta}}$ is the unit rational point of the group-scheme $H_{\bar{\eta}}$. Now if we take the pull back of this diagram over $f:X \rightarrow S$, we will obtain the following commutative diagram
$$ \xymatrix{X \times_{S} \bar{\xi} \ar[r] \ar[dr] & X \times_{S} H_{\bar{\eta}} \ar[r] \ar[d] & X^{\prime} \ar[d]^{p^{\prime}} \\
	& X_{\bar{\eta}} \ar[r]  &X  }  $$
Notice that the product $X \times_{S} H_{\bar{\eta}}$ is 
$$ \begin{array}{rcl}
X \times_{S} H_{\bar{\eta}} & = & X \times_{S} \left( S^{\prime} \times_{S} \bar{\eta} \right) \\
& = & \left( X \times_{S} S^{\prime} \right) \times_{S} \bar{\eta} \\
& = & X^{\prime} \times_{S} \bar{\eta} \\
& = & X_{\bar{\eta}}^{\prime}.
\end{array} $$
In addition, If we call $X_{\bar{\xi}}$ the fibered product $X \times_{S} \bar{\xi}$, we see that
$$\begin{array}{rcl}
X \times_{S} \bar{\xi} & = & \left( X \times_{S} S^{\prime} \right) \times_{S^{\prime}} \bar{\xi} \\
& = & X^{\prime} \times_{S^{\prime}} \bar{\xi} \\
& = & X_{\bar{\xi}}^{\prime}
\end{array} $$
and then we see that $X_{\bar{\xi}}$ is the geometric generic fiber of $f^{\prime}: X^{\prime} \rightarrow S^{\prime}$. Since $X^{\prime}$ is a pull-back torsor, we have that $X_{\bar{\eta}}^{\prime}$ is a trivial torsor over $X_{\bar{\eta}}$ with a section $s_{\bar{\eta}}: X_{\bar{\eta}} \rightarrow X_{\bar{\eta}}^{\prime}$ fitting into the following diagram:
\begin{equation} \label{equation:Main_commutative_diagram_in_lemma_eta-xi}
\xymatrix{X_{\bar{\xi}}^{\prime} \ar[rr]^{\theta} \ar[rd]^{\lambda} & & X_{\bar{\eta}}^{\prime} \ar@/_/[dl]_{p_{\bar{\eta}}^{\prime}}  \\
	& X_{\bar{\eta}} \ar@/_/[ur]_{s_{\bar{\eta}}} &}.
\end{equation}
where the triangle made by $\theta$, $\lambda$ and $p_{\bar{\eta}}^{\prime}$ is commutative as well as the triangle made by the first two morphisms mentioned before, but with $s_{\bar{\eta}}$ instead of $p_{\bar{\eta}}^{\prime}$ as the third morphism.\\ 
Now let us consider $p:T \rightarrow X$ and take its pull-back $T_{\bar{\eta}}$ over $X_{\bar{\eta}}$ and the pull-back $T_{\bar{\xi}}$ over $X_{\bar{\xi}}^{\prime}$. From Proposition \ref{prop:Pull-back_of_pure_torsors_to_the_geometric_generic_fiber} we know that $T_{\bar{\xi}} \rightarrow X_{\bar{\xi}}^{\prime}$ is Nori-reduced as $q:T \rightarrow X^{\prime}$ is pure with respect to $f^{\prime}$ and it is worth to point out that $T_{\bar{\xi}}$ over $X_{\bar{\xi}}$ is the pull-back along $\theta$ of $q_{\bar{\eta}}:T_{\bar{\eta}} \rightarrow X_{\bar{\eta}}^{\prime}$, which is the pull-back to $X_{\bar{\eta}}$ of $q$. Let $z:Z \rightarrow X_{\bar{\eta}}$ be the pull-back of the torsor $q_{\bar{\eta}}:T_{\bar{\eta}} \rightarrow X_{\bar{\eta}}^{\prime}$ along $s_{\bar{\eta}}$. As $H$ is a quotient of $G$, if we call $K$ the normal subgroup-scheme of $G$ such that $H = G/K$, we have that $Z$ is $K$-torsor and from the last commutative square, we see that its pull-back along $\lambda$ is $T_{\bar{\xi}}$, but this pull-back is Nori-reduced, and then so is $Z$ which is then the torsor over $X_{\bar{\eta}}$ we were looking for, finishing the proof.
\endproof
Another consequence of the commutative diagram \ref{equation:Main_commutative_diagram_in_lemma_eta-xi} in the proof Lemma \ref{lemma:Lemma_eta-xi} is the following:
\begin{corollary} \label{coro:The_maximal_sub-torsor_of_a_mixed_torsor_over_the_geometric_generic_fiber_is_the_biggest_one_possible}
	Let $p:T \rightarrow X$ be a mixed $G$-torsor with maximal pull-back quotient $p^{\prime}:X^{\prime} \rightarrow X$, if $H=G/K$ for a certain normal subgroup-scheme $K$ of $G$ which is the group-scheme associated to $q^{\prime}:T \rightarrow X^{\prime}$. Then, the maximal Nori-reduced sub-torsor of $p_{\bar{\eta}}:T_{\bar{\eta}} \rightarrow X_{\bar{\eta}}$ is a $K$-torsor.
\end{corollary}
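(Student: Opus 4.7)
The plan is to read off the result directly from the construction produced in the proof of Lemma \ref{lemma:Lemma_eta-xi}, by checking that the Nori-reduced $K$-torsor built there actually lives \emph{inside} $T_{\bar{\eta}}$ as a pointed sub-torsor, and then appealing to the uniqueness of the Nori-reduction recalled right after Remark \ref{remark:Nori_reduced_torsors_in_terms_of_the_FGS}. In more detail: the unique Nori-reduced pointed sub-torsor of the $G$-torsor $p_{\bar{\eta}} : T_{\bar{\eta}} \to X_{\bar{\eta}}$ has structure group the image $J \subseteq G$ of the morphism $\pi_{1}^{N}(X_{\bar{\eta}}) \to G$ classifying $T_{\bar{\eta}}$, and any Nori-reduced pointed sub-torsor of $T_{\bar{\eta}}$ necessarily coincides with this one. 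So it is enough to produce a Nori-reduced pointed $K$-sub-torsor of $T_{\bar{\eta}}$, and Lemma \ref{lemma:Lemma_eta-xi} essentially already does this.

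Concretely, in the proof of Lemma \ref{lemma:Lemma_eta-xi} the torsor $z : Z \to X_{\bar{\eta}}$ was defined as the pull-back
\begin{equation*}
Z \;=\; X_{\bar{\eta}} \times_{X_{\bar{\eta}}^{\prime}} T_{\bar{\eta}}
\end{equation*}
of the $K$-torsor $q_{\bar{\eta}} : T_{\bar{\eta}} \to X_{\bar{\eta}}^{\prime}$ along the unit section $s_{\bar{\eta}}$ of the trivialized $H$-torsor $p_{\bar{\eta}}^{\prime} : X_{\bar{\eta}}^{\prime} \to X_{\bar{\eta}}$. I would first observe that $s_{\bar{\eta}}$ is a closed immersion, because under the trivialization $X_{\bar{\eta}}^{\prime} \cong X_{\bar{\eta}} \times H_{\bar{\eta}}$ it corresponds to the inclusion of the unit rational point of the finite group-scheme $H_{\bar{\eta}}$. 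By base change along $q_{\bar{\eta}}$, the projection $Z \to T_{\bar{\eta}}$ is then also a closed immersion. Moreover the $K$-action on $Z$ inherited from its $K$-torsor structure is tautologically the restriction of the $G$-action on $T_{\bar{\eta}}$ to the normal subgroup-scheme $K$, so $Z \hookrightarrow T_{\bar{\eta}}$ is a pointed $K$-sub-torsor (the base point of $T_{\bar{\eta}}$ maps to the distinguished rational point of $X_{\bar{\eta}}^{\prime}$ singled out by $s_{\bar{\eta}}$, and thus factors through $Z$).

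At this stage Lemma \ref{lemma:Lemma_eta-xi} tells us that $Z$ is Nori-reduced, so by the uniqueness of Nori-reductions recalled above, $Z$ is the (unique) Nori-reduced pointed sub-torsor of $T_{\bar{\eta}}$, which is therefore a $K$-torsor. This establishes that the image $J$ of $\pi_{1}^{N}(X_{\bar{\eta}}) \to G$ is exactly $K$ — in particular as large as it could possibly be, given that $T$ is mixed with maximal pull-back quotient $X^{\prime}$ corresponding to $G/K = H$. The only real thing to check is the closed immersion property of $s_{\bar{\eta}}$ together with the compatibility of the $K$-action with the $G$-action on $T_{\bar{\eta}}$, both of which are immediate from the Cartesian diagram \eqref{equation:Main_commutative_diagram_in_lemma_eta-xi} used in the proof of Lemma \ref{lemma:Lemma_eta-xi}; no new ingredient beyond that lemma and the uniqueness of Nori-reductions is needed.
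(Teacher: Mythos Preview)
Your proposal is correct and follows essentially the same approach as the paper: both identify the Nori-reduced $K$-torsor $Z$ built in the proof of Lemma~\ref{lemma:Lemma_eta-xi} as a closed sub-torsor of $T_{\bar{\eta}}$ via the closed immersion $s_{\bar{\eta}}$, and then invoke uniqueness of the maximal Nori-reduced sub-torsor. The paper additionally spells out the verification that $p_{\bar{\eta}} \circ i = z$ (i.e.\ that $Z$ really sits over $X_{\bar{\eta}}$ via $p_{\bar{\eta}}$, not just over $X_{\bar{\eta}}^{\prime}$) by composing with $p_{\bar{\eta}}^{\prime}$ and using $p_{\bar{\eta}}^{\prime} \circ s_{\bar{\eta}} = \mathrm{id}$, whereas you absorb this into ``immediate from the Cartesian diagram''; it would do no harm to make that one line explicit.
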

\proof
Firstly, as $X^{\prime}$ becomes trivial when taking the pull-back over $X_{\bar{\eta}}$, we have that the composition
$$ \pi_{1}^{N}(X_{\bar{\eta}}) \rightarrow G \rightarrow G/K $$
is trivial, and thus the image of the first arrow is contained in $K$. Let $V \subset X_{\bar{\eta}}$ be the maximal Nori-reduced sub-torsor of $X_{\bar{\eta}}$ which is not Nori-reduced, as its associated group-scheme corresponds to the image of $\pi_{1}^{N}(X_{\bar{\eta}}) \rightarrow G$, we see that $K$ is the biggest possible subgroup-scheme that could be associated to $V$. \\
Moreover, let us consider the Nori-reduced $K$-torsor $z:Z \rightarrow X_{\bar{\eta}}$ from the last proof, we will show that it is a sub-torsor of $T_{\bar{\eta}}$ which implies that $V=Z$. We have the following cartesian diagram
$$ \xymatrix{Z \ar[r]^{i} \ar[d]_{z} & T_{\bar{\eta}} \ar[d]^{q_{\bar{\eta}}} \\
	X_{\bar{\eta}} \ar[r]_{s_{\bar{\eta}} }& X_{\bar{\eta}}^{\prime} }  $$
and because $s_{\bar{\eta}}$ is a closed immersion, so is $i$, and the only thing we need to show to get that $Z$ is a sub-torsor, is the equality $p_{\bar{\eta}} \circ i = z$. From the cartesian diagram we see that $s_{\bar{\eta}} \circ z = q_{\bar{\eta}} \circ i$ and if we compose this equality with $p_{\bar{\eta}}^{\prime}$ we obtain:
$$\begin{array}{rcl}
\underbrace{p_{\bar{\eta}}^{\prime} \circ q_{\bar{\eta}}}_{=p_{\bar{\eta}}} \circ i & = & \underbrace{p_{\bar{\eta}}^{\prime} \circ s_{\bar{\eta}}}_{=id_{X_{\bar{\eta}}} } \circ z \\
p_{\bar{\eta}} \circ i & = & z
\end{array}$$
and effectively, $Z$ is a sub-torsor of $X_{\bar{\eta}}$ finishing the proof.
\endproof
\subsection{Proof of finiteness and consequences} \label{subsec:Proof_of_finiteness_for_the_kernel}
Now we will prove that for the fibration $f:X \rightarrow S$, the kernel of the induced morphism of fundamental group-schemes is finite as the FGS of $X_{\bar{\eta}}$ is finite and $S$ is an abelian variety. But first, we need a general lemma:
\begin{lemma} \label{lemma:If_a_scheme_over_k_has_a_FGS_where_orders_are_bounded_the_FGS_is_finite}
	Let $k$ be any field, and let $X$ be a $k$-scheme possessing a FGS with respect to the rational point $x \in X(k)$. Let us suppose that for any Nori-reduced $G$-torsor over $X$, the order of $G$ is bounded by a fixed finite positive integer, then $\pi_{1}^{N}(X,x)$ is finite. 
\end{lemma}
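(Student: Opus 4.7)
The plan is to exploit the description, recalled right after Definition 1.4, of the fundamental group-scheme as the cofiltered limit of the finite group-schemes attached to pointed Nori-reduced torsors, and then show that the uniform bound on orders forces the cofiltered system to have a terminal object.

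First I would write $\pi_{1}^{N}(X,x) = \varprojlim_{i \in I} G_{i}$, where $I$ indexes the pointed finite Nori-reduced torsors $T_{i} \to X$ with associated finite group-scheme $G_{i}$, and recall that for any morphism $T_{k} \to T_{i}$ of pointed Nori-reduced torsors the induced morphism of group-schemes $G_{k} \to G_{i}$ is faithfully flat (by Definition 1.4 applied to Nori-reduced torsors on both sides, or equivalently by Remark 1.5 and the cofiltered description). In particular, in the cofiltered diagram $\{G_{i}\}_{i\in I}$ every transition arrow is a faithfully flat surjection of finite $k$-group-schemes, and hence satisfies $\operatorname{ord}(G_{k}) \geq \operatorname{ord}(G_{i})$.

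Next, by hypothesis the set $\{\operatorname{ord}(G_{i}) : i \in I\}$ is a set of positive integers bounded above by a fixed $N \geq 1$; in particular it has a maximum $N_{0}$, attained at some index $i_{0} \in I$. For an arbitrary $j \in I$, the cofilteredness of $I$ produces $k \in I$ together with arrows $T_{k} \to T_{i_{0}}$ and $T_{k} \to T_{j}$, so we obtain faithfully flat morphisms $G_{k} \twoheadrightarrow G_{i_{0}}$ and $G_{k} \twoheadrightarrow G_{j}$. The first forces $\operatorname{ord}(G_{k}) \geq N_{0}$, while maximality gives $\operatorname{ord}(G_{k}) \leq N_{0}$, so $G_{k} \twoheadrightarrow G_{i_{0}}$ is a faithfully flat morphism of finite group-schemes of the same order, hence an isomorphism. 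Composing its inverse with $G_{k} \twoheadrightarrow G_{j}$ yields a canonical morphism $G_{i_{0}} \to G_{j}$, which one checks is compatible with the transition maps in the system (by cofilteredness: refining further shows independence of the choice of $k$).

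Consequently $G_{i_{0}}$ sits as a terminal (equivalently, initial with respect to the order on indices) object of the cofiltered system $\{G_{i}\}$, which gives $\pi_{1}^{N}(X,x) = \varprojlim_{i} G_{i} \cong G_{i_{0}}$, a finite $k$-group-scheme, as required. The only step that needs some care is checking that the morphism $G_{i_{0}} \to G_{j}$ produced above is well defined independently of the auxiliary index $k$; this is the ``main obstacle'' but it is immediate from cofilteredness, since any two such choices $k, k'$ are dominated by a third $k''$ of the same maximal order $N_{0}$, so the two resulting arrows $G_{i_{0}} \to G_{j}$ factor through the single arrow $G_{k''} \to G_{j}$ and therefore agree.
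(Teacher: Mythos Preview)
Your argument is correct and reaches the same conclusion as the paper, but by a somewhat more direct route. The paper organizes the Nori-reduced torsors into a poset, invokes Zorn's lemma (checking that chains stabilize because the associated Hopf algebras have bounded $k$-dimension), and then shows the resulting maximal element is unique via a fibered-product argument. You instead pick directly an index $i_0$ of maximal order and use cofilteredness to show $G_{i_0}$ dominates every $G_j$; this sidesteps Zorn's lemma entirely, which is a genuine simplification since the orders form a bounded set of positive integers and a maximum exists outright.

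One remark: your concern about the well-definedness of $G_{i_0}\to G_j$ being the ``main obstacle'' is overstated. Because $X$ possesses a FGS, pointed Nori-reduced torsors correspond to faithfully flat morphisms $\pi_1^N(X,x)\twoheadrightarrow G$, and a map between two such quotients making the triangle under $\pi_1^N(X,x)$ commute is unique when it exists (the source is an epimorphism). Hence the index category is a poset, and there is at most one arrow $G_{i_0}\to G_j$, so independence of the auxiliary $k$ is automatic. Your cofilteredness argument for this point is not wrong, just unnecessary; noting the poset structure would tighten the write-up.
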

\proof
Let $M \in \mathbb{N}$ be the bound for the orders of the group-schemes acting over all Nori-reduced torsors over $X$. \\
Let $\mathcal{M}$ be the set of isomorphism classes of finite Nori-reduced torsors over $X$, this set comes with a natural partial ordering $V \leq V^{\prime}$ iff there exists a faithfully flat morphism $V^{\prime} \rightarrow V$ between representatives over $X$. In this case, we can use Zorn's lemma to get a maximal element of $\mathcal{M}$ that will be a finite Nori-reduced $G$-torsor over $X$ with $\text{ord}(G) \leq M$. \\
Let $\{V_{i}\}_{i \in I}$ be a chain of elements of $\mathcal{M}$. If the index set $I$ is finite, we can index the elements of the chain as $\{V_{i}\}_{i = 1} ^{n}$ and we have a chain of finite group-schemes 
$$ G_{1} \leftarrow G_{2} \leftarrow \cdots \leftarrow G_{n}  $$
with faithfully flat arrows between them, that corresponds to a chain of inclusions of Hopf-algebras
$$ A_{1} \hookrightarrow A_{2} \hookrightarrow \cdots \hookrightarrow A_{n} $$
and all the Hopf-algebras on the chain have finite $k$-dimension, bounded by $M$, and thus $V_{n}$ is a maximal element for the chain of torsors. We note as well that $A_{n}$ is isomorphic to the directed limit of the chain of Hopf-algebras. \\
If $I$ is infinite, we will get a similar chain of Hopf-algebras
$$ A_{0} \hookrightarrow A_{1} \hookrightarrow \cdots $$
and each Hopf-algebra $A_{i}$ ($i \in I$) has finite $k$-dimension bounded by $M$. As the $k$-dimension of the Hopf-algebras on the chain increases, this chain must be eventually stationary, i.e., $A_{i} \cong A_{j}$ for $i\leq j$, after a finite amount of inclusions starting from $A_{i_{0}}$, if $A_{N}$ is the least element that stabilizes the chain of Hopf-algebras, we see that the direct limit $\displaystyle{\lim_{\rightarrow}} A_{i}$ is isomorphic to $A_{N}$ and thus to all $A_{i}$ with $i \geq N$, this means that we can take $V_{N}$ as the maximal element of the chain, and the dimension of $A_{N}$ is bounded by $M$. \\
As we have assured the existence of finite a $G$-torsor $U \rightarrow X$ ($\text{ord}(G) \leq M$), the supremum of $\mathcal{M}$, we can see that for any finite Nori-reduced torsor $T$ over $X$, we have a faithfully flat arrow $U \rightarrow X$. We will conclude by showing that $U$ is unique modulo isomorphism. Let $V$ be another supremum of $\mathcal{M}$, then we can take the fibered product $U \times_{X} V$. This torsor might not be Nori-reduced but it has a Nori-reduced sub-torsor $Z \subset U \times_{X} V$ with faithfully flat arrows $Z \rightarrow U$ and $Z \rightarrow V$ as both $U$ and $V$ are Nori-reduced, this forces $Z \cong U \cong V$. Thus $U$ is unique under isomorphism, and $U \cong \hat{X}$ finishing the proof.
\endproof
\begin{proposition} \label{proposition:Finiteness_of_the_kernel}
	Let $f:X \rightarrow S$ be as in section \ref{subsec:Setting_and_notation}. Then, the kernel of the induced morphism $\pi^{N}(f):\pi_{1}^{N}(X) \rightarrow \pi_{1}^{N}(S)$ is finite.
\end{proposition}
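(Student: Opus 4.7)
The plan is to combine Proposition \ref{prop:If_the_morphism_between_FGSs_is_faithfully_flat_then_the_universal_pull-back_has_a_FGS}, which identifies $\ker(\pi^{N}(f))$ with $\pi_{1}^{N}(X^{*})$, with the boundedness criterion of Lemma \ref{lemma:If_a_scheme_over_k_has_a_FGS_where_orders_are_bounded_the_FGS_is_finite}. Thus it suffices to exhibit a uniform integer $M$ with $\text{ord}(K) \leq M$ for every Nori-reduced $K$-torsor $V \rightarrow X^{*}$. The natural candidate, which will work in our setting, is $M := \text{ord}(\pi_{1}^{N}(X_{\bar{\eta}}))$, which is finite by the standing assumption of Section \ref{subsec:Setting_and_notation}. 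The strategy is to produce, from each such $V$, a Nori-reduced $K$-torsor over $X_{\bar{\eta}}$; this will immediately bound $\text{ord}(K)$ by the order of $\pi_{1}^{N}(X_{\bar{\eta}})$.

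Fix a finite Nori-reduced $K$-torsor $V \rightarrow X^{*}$. By Lemma \ref{lemma:Torsors_over_pro-NR_torsors_descent_to_a_finite_torsor}(1), it descends to a (necessarily Nori-reduced) $K$-torsor $V_{i} \rightarrow X_{i}$ for some index $i \in I$. I split the analysis according to whether $V$ descends all the way down to $X$ or only to some $X_{j}$ with $j \geq 1$. In the first case, Proposition \ref{prop:Properties_of_the_universal_pull-back}(2) applied to $i = 0$ shows that $V_{0} \rightarrow X$ is pure with respect to $f$, so Proposition \ref{prop:Pull-back_of_pure_torsors_to_the_geometric_generic_fiber} gives that $(V_{0})_{\bar{\eta}} \rightarrow X_{\bar{\eta}}$ is a Nori-reduced $K$-torsor, as required.

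In the remaining case, Proposition \ref{prop:Properties_of_the_universal_pull-back}(3) furnishes an index $j$ and a mixed $G$-torsor $W \rightarrow X$ whose maximal pull-back quotient is $X_{j} \rightarrow X$ (corresponding to a quotient $G \twoheadrightarrow H$), with $V_{j}$ being the resulting torsor $W \rightarrow X_{j}$ for the normal subgroup-scheme $K = \ker(G \twoheadrightarrow H)$. Applying Corollary \ref{coro:The_maximal_sub-torsor_of_a_mixed_torsor_over_the_geometric_generic_fiber_is_the_biggest_one_possible} to $W$ yields that the maximal Nori-reduced sub-torsor of $W_{\bar{\eta}} \rightarrow X_{\bar{\eta}}$ is precisely a $K$-torsor. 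Thus in both cases $K$ arises as the structure group of a Nori-reduced torsor over $X_{\bar{\eta}}$, and so $\text{ord}(K) \leq \text{ord}(\pi_{1}^{N}(X_{\bar{\eta}}))$. The desired uniform bound is established, and Lemma \ref{lemma:If_a_scheme_over_k_has_a_FGS_where_orders_are_bounded_the_FGS_is_finite} applied to $X^{*}$ gives the conclusion.

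The principal obstacle I anticipate is organising the case-analysis cleanly; the substantive input, namely that a Nori-reduced torsor born over $X^{*}$ or its descents can be traced back to a Nori-reduced torsor over the geometric generic fiber $X_{\bar{\eta}}$ of $f$ itself (rather than only over the generic fibers of the intermediate $f_{i}\colon X_{i} \rightarrow S_{i}$), is exactly what Corollary \ref{coro:The_maximal_sub-torsor_of_a_mixed_torsor_over_the_geometric_generic_fiber_is_the_biggest_one_possible} and the underlying Lemma \ref{lemma:Lemma_eta-xi} are designed to supply. The abelian variety hypothesis on $S$ enters implicitly through Lemma \ref{lemma:Characterizing_the_induced_morphism_between_torsors_pull-back_case}, which guarantees that base changes of $S$ along Nori-reduced torsors remain abelian varieties and keeps the $\eta$-$\xi$ comparison machinery applicable throughout the argument.
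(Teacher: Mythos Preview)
Your argument follows essentially the same route as the paper's proof: identify $\ker(\pi^{N}(f))$ with $\pi_{1}^{N}(X^{*})$, then bound the order of every Nori-reduced torsor over $X^{*}$ by $\text{ord}(\pi_{1}^{N}(X_{\bar{\eta}}))$ via the same two-case split (descent to $X$ versus descent only to some intermediate $X_{j}$), invoking Proposition \ref{prop:Pull-back_of_pure_torsors_to_the_geometric_generic_fiber} in the first case and Corollary \ref{coro:The_maximal_sub-torsor_of_a_mixed_torsor_over_the_geometric_generic_fiber_is_the_biggest_one_possible} in the second.

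There is one imprecision worth flagging. In the second case you write that $V_{j}$ \emph{is} the torsor $W \rightarrow X_{j}$ for the group $K = \ker(G \twoheadrightarrow H)$. Proposition \ref{prop:Properties_of_the_universal_pull-back}(3) only asserts that $V_{j}$ is a \emph{quotient} of $W$ as a torsor over $X_{j}$; in general the structure group $K$ of $V_{j}$ is a quotient of $\ker(G \twoheadrightarrow H)$, not equal to it. Consequently Corollary \ref{coro:The_maximal_sub-torsor_of_a_mixed_torsor_over_the_geometric_generic_fiber_is_the_biggest_one_possible} gives a Nori-reduced $\ker(G \twoheadrightarrow H)$-torsor over $X_{\bar{\eta}}$, not directly a $K$-torsor. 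This does not damage the argument, since $\text{ord}(K) \leq \text{ord}(\ker(G \twoheadrightarrow H)) \leq \text{ord}(\pi_{1}^{N}(X_{\bar{\eta}}))$, but the sentence ``the maximal Nori-reduced sub-torsor of $W_{\bar{\eta}} \rightarrow X_{\bar{\eta}}$ is precisely a $K$-torsor'' should be adjusted accordingly. The paper handles this the same way, bounding the order of $W \rightarrow X_{j}$ and then using that $V_{j}$ is a quotient.
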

\proof
Using Lemma \ref{lemma:If_a_scheme_over_k_has_a_FGS_where_orders_are_bounded_the_FGS_is_finite}, we will show that the order of the group-scheme associated to a finite Nori-reduced torsor $V \rightarrow X^{*}$ is bounded by a fixed positive integer. \\
Keeping the notation of Proposition \ref{prop:Properties_of_the_universal_pull-back}, there are two cases:
\begin{itemize}
	\item[(a)] If $V$ descends to a finite Nori-reduced and pure torsor $V_{0}$ over $X$, then we see that the pull-back to $X_{\bar{\eta}}$ is Nori-reduced by Proposition \ref{prop:Pull-back_of_pure_torsors_to_the_geometric_generic_fiber} and thus the order of the group-scheme associated to $V$ is bounded by $|\pi_{1}^{N}(X_{\bar{\eta}})|$, the order of $\pi_{1}^{N}(X_{\bar{\eta}})$.
	\item[(b)] If $V$ descends to a finite torsor $V_{i} \rightarrow X_{i}$, that is pure with respect to $f_{i}:X_{i} \rightarrow S_{i}$, by Proposition \ref{prop:Properties_of_the_universal_pull-back}(3), we can suppose that there is a mixed torsor $T \rightarrow X$ such that $X_{i}$ is its maximal pull-back quotient and $V_{i}$ is a quotient of $T$ over $X_{i}$. From Lemma \ref{lemma:Lemma_eta-xi} and Corollary \ref{coro:The_maximal_sub-torsor_of_a_mixed_torsor_over_the_geometric_generic_fiber_is_the_biggest_one_possible}, the order of $T \rightarrow X_{i}$ is the order of the maximal Nori-reduced sub-torsor of the pull-back $T_{\bar{\eta}}$ over $X_{\bar{\eta}}$, which is then bounded by $|\pi_{1}^{N}(X_{\bar{\eta}})|$, finishing the proof.
\end{itemize}
\endproof
Finally, we outline a strong consequence of the finiteness of the kernel.
\begin{corollary} \label{coro:Consequence_of_a_finite_kernel}
	Let $f:X \rightarrow S$ be a morphism between proper, connected and reduced $k$-schemes of finite type with $k$ perfect, and let us suppose that the induced morphism (for compatible rational points) $\pi^{N}(f):\pi_{1}^{N}(X,x) \rightarrow \pi_{1}^{N}(S,s)$ is faithfully flat with finite kernel. Then, there is a Nori-reduced torsor $X_{i} \rightarrow X$ that is the pull-back of a Nori-reduced torsor $S_{i} \rightarrow S$, such that we have an isomorphism $\pi_{1}^{N}(X_{i},x_{i}) \cong \pi_{1}^{N}(S_{i},s_{i}) \times_{k} \ker(\pi^{N}(f))$ for compatible rational points coming from the morphism $X_{i} \rightarrow S_{i}$.
\end{corollary}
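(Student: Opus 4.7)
The plan is to realize the kernel $K:=\ker(\pi^{N}(f))$ geometrically via the universal pull-back torsor of Definition \ref{defi:Universal_pull-back_torsor}. By Proposition \ref{prop:If_the_morphism_between_FGSs_is_faithfully_flat_then_the_universal_pull-back_has_a_FGS}, the scheme $X^{*}=\hat{S}\times_{S}X$ is a pro-NR torsor with $\pi_{1}^{N}(X^{*},x^{*})=K$, which is finite by hypothesis. In particular the universal torsor $\widehat{X^{*}}=\hat{X}$ of $X^{*}$ is itself a \emph{finite} Nori-reduced $K$-torsor over $X^{*}$, namely the one classifying the identity of $K$ (cf.\ Remark \ref{remark:Nori_reduced_torsors_in_terms_of_the_FGS}); this will be the geometric source of the splitting.

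Write $X^{*}=\displaystyle{\lim_{\leftarrow}}\,X_{j}$ with $X_{j}=S_{j}\times_{S}X$. For every fixed $i$ the transitions $X_{j}\to X_{i}$ ($j\geq i$) are still Nori-reduced, being the base change of the Nori-reduced towers $S_{j}\to S_{i}$ along the faithfully flat $\pi^{N}(f_{i})$, so $X^{*}\to X_{i}$ is itself a pro-NR torsor. I would then apply Lemma \ref{lemma:Torsors_over_pro-NR_torsors_descent_to_a_finite_torsor}(1) to the finite Nori-reduced $K$-torsor $\hat{X}\to X^{*}$ and obtain an index $i$ together with a Nori-reduced $K$-torsor $\tilde{X}_{i}\to X_{i}$ such that $\hat{X}=\tilde{X}_{i}\times_{X_{i}}X^{*}$. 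Pointing $\tilde{X}_{i}$ by the image of $\hat{x}\in\hat{X}(k)$, it is classified by a morphism $r\colon\pi_{1}^{N}(X_{i},x_{i})\to K$.

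The decisive step is to verify that $r$ is a retraction of the canonical closed immersion $\iota\colon K=\pi_{1}^{N}(X^{*},x^{*})\hookrightarrow\pi_{1}^{N}(X_{i},x_{i})$ furnished by Proposition \ref{proposition:Pro-saturated_torsors_possess_a_FGS} applied to the pro-NR torsor $X^{*}\to X_{i}$. This is a naturality check: the composite $r\circ\iota$ classifies the pull-back of $\tilde{X}_{i}$ along $X^{*}\to X_{i}$, which by construction equals $\hat{X}$; but $\hat{X}$ as a $K$-torsor over $X^{*}$ is the universal torsor, hence classified by $\mathrm{id}_{K}$. Consequently $r\circ\iota=\mathrm{id}_{K}$.

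With the retraction $r$ in hand I form $\phi=(r,\pi^{N}(f_{i}))\colon\pi_{1}^{N}(X_{i},x_{i})\to K\times_{k}\pi_{1}^{N}(S_{i},s_{i})$ and exploit the short exact sequence $1\to K\to\pi_{1}^{N}(X_{i},x_{i})\to\pi_{1}^{N}(S_{i},s_{i})\to 1$ (whose leftmost inclusion is $\iota$, and whose rightmost term equals $\ker(\pi^{N}(f_{i}))=K$ by Proposition \ref{prop:Properties_of_the_universal_pull-back}(2)) to show that $\phi$ is an isomorphism: its kernel is trivial since $r|_{K}=\mathrm{id}_{K}$ forces $\ker(r)\cap K=1$, and it is fppf surjective because any pair $(k_{0},q_{0})$ can be lifted by first taking a preimage $g'$ of $q_{0}$ via the faithfully flat $\pi^{N}(f_{i})$ and then correcting by $r(g')^{-1}k_{0}\in K$. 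The main obstacle is the retraction identity $r\circ\iota=\mathrm{id}_{K}$: the other steps are bookkeeping, but this one requires simultaneously tracking the pointed structures on $\hat{X}$, $X^{*}$, $\tilde{X}_{i}$ and $X_{i}$ and identifying the pro-NR inclusion $\iota$ with the group-scheme kernel of $\pi^{N}(f_{i})$.
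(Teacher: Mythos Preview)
Your proposal is correct and follows essentially the same route as the paper. Both arguments hinge on descending the finite torsor $\hat{X}\to X^{*}$ to some level $X_{i}$ via Lemma~\ref{lemma:Torsors_over_pro-NR_torsors_descent_to_a_finite_torsor}, obtaining a cartesian square $\hat{X}\cong X^{*}\times_{X_{i}}\hat{X}_{i}$. The paper then simply observes that since $\hat{X}$ is the universal torsor of $X_{i}$, its structure group $\pi_{1}^{N}(X_{i},x_{i})$ is the product of the structure groups of the two factors, namely $\pi_{1}^{N}(S_{i},s_{i})$ and $K$. You unpack this same fact more explicitly by extracting a retraction $r$ of $\iota$ and checking that $(r,\pi^{N}(f_{i}))$ is an isomorphism; this is just a hands-on verification of what the fibered-product decomposition of the universal torsor says. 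One small wording issue: in your parenthetical you write ``whose rightmost term equals $\ker(\pi^{N}(f_{i}))=K$'', but the rightmost term of the sequence is $\pi_{1}^{N}(S_{i},s_{i})$; you presumably mean the leftmost term (or the kernel of the rightmost arrow), which is indeed $K$ by Proposition~\ref{prop:Properties_of_the_universal_pull-back}(2).
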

\proof
As the kernel of $\pi^{N}(f)$ is finite, the torsor $\hat{X} \rightarrow X^{*}$ is finite. Using Lemma \ref{lemma:Torsors_over_pro-NR_torsors_descent_to_a_finite_torsor} as $X^{*}$ is pro-NR, $\hat{X} \rightarrow X^{*}$ descends to a finite torsor $\hat{X}_{i} \rightarrow X_{i}$ where $X_{i} \rightarrow X$ is the pull-back of a Nori-reduced torsor $S_{i} \rightarrow S$ such that we have the following cartesian diagram
$$\xymatrix{\hat{X} \ar[r] \ar[d] & \hat{X}_{i} \ar[d] \\
	X^{*} \ar[r]& X_{i} }. $$
In particular, we have that $\hat{X} = X^{*} \times_{X_{i}} \hat{X}_{i}$ and this implies for $x_{i} \in X_{i}(k)$ that $\pi_{1}^{N}(X_{i},x_{i}) = G \times \ker(\pi^{N}(f))$ is the product of the group-schemes corresponding to $X^{*}$ and $\hat{X}_{i}$ over $X_{i}$. To characterize $G$ as $\pi_{1}^{N}(S_{i},s^{\prime})$, we see from the commutative diagram
$$ \xymatrix{X^{*} \ar[r] \ar[d] & \hat{S} \ar[d] \\
	X_{i} \ar[r]& S_{i} } $$
that $\hat{S} \rightarrow S_{i}$ and thus $X^{*} \rightarrow X_{i}$ is a $\pi_{1}^{N}(S_{i},s_{i})$-torsor for some $s_{i} \in S_{i}(k)$ compatible with $x_{i}$, finishing the proof.
\endproof
\begin{remark} \label{remark:What_if_the_kernel_is_finite_and_the_universal_torsor_descends_to_X}
	In the case the torsor $\hat{X} \rightarrow X^{*}$ descends to a torsor over $X$, we will obtain that $\pi_{1}^{N}(X,x) = \pi_{1}^{N}(S,s) \times_{k} \ker(\pi^{N}(f))$ and thus we would obtain a split short exact sequence
	$$ 1 \rightarrow \ker(\pi^{N}(f)) \rightarrow \pi_{1}^{N}(X,x) \rightarrow \pi_{1}^{N}(S,s) \rightarrow 1 .$$
\end{remark}
\endproof
\section{Base change condition and proof of the exact sequence} \label{sec:Base_change_and_proof_of_main_theorem}
Keeping the hypotheses and the notation outlined in \ref{subsec:Setting_and_notation}, we will restate the main theorem:
\begin{theorem} \label{theorem:Main_theorem}
	Let $f:X \rightarrow S$ be a fibration as in Section \ref{subsec:Setting_and_notation}, with $k$ an uncountable algebraically closed field. 
	Then, there exists a rational point $s \in S(k)$ such that the following exact sequence:
	$$ \pi_{1}^{N}(X_{s},x) \rightarrow \pi_{1}^{N}(X,x) \rightarrow \pi_{1}^{N}(S,s) \rightarrow 1  $$
	is exact where $x \in X(k)$ is a rational point of $X$ over $s$.
\end{theorem}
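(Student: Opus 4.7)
The plan is to combine the finiteness of the kernel (Proposition \ref{proposition:Finiteness_of_the_kernel}) with the splitting structure from Corollary \ref{coro:Consequence_of_a_finite_kernel} to reduce to a situation where Lemma \ref{lemma:Homotopy_exact_sequence_when_the_kernels_descends_to_the_base} applies, and then to transfer the resulting exact sequence back to $f:X\to S$ using the simplifications of the base change condition developed in Section \ref{subsec:Base_change_condition}. The uncountability of $k$ will only be used at the last step, to guarantee the existence of a rational point avoiding a countable family of proper closed subsets of $S$ on which the base change condition might fail.

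First, I would apply Proposition \ref{prop:If_the_morphism_of_S-FGSs_is_faithfully_flat_then_so_is_the_morphism_between_FGSs} (combined with Lemma 8.1 of Langer cited there) to get that $\pi^{N}(f):\pi_{1}^{N}(X)\to\pi_{1}^{N}(S)$ is faithfully flat, so the rightmost surjectivity in the sequence is automatic. By Proposition \ref{proposition:Finiteness_of_the_kernel} the group-scheme $K:=\ker\pi^{N}(f)$ is finite, and Corollary \ref{coro:Consequence_of_a_finite_kernel} then furnishes a Nori-reduced torsor $S'\to S$ (of some finite group-scheme) and its pull-back $X':=S'\times_{S}X\to X$, together with a splitting $\pi_{1}^{N}(X',x')\cong \pi_{1}^{N}(S',s')\times_{k} K$. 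By Lemma \ref{lemma:Characterizing_the_induced_morphism_between_torsors_pull-back_case} the base-changed morphism $f':X'\to S'$ still satisfies the hypotheses of Section \ref{subsec:Setting_and_notation}, with $S'$ an abelian variety, and the product splitting means precisely that the kernel of $\pi^{N}(f')$ descends to the base $S'$ in the sense required by Lemma \ref{lemma:Homotopy_exact_sequence_when_the_kernels_descends_to_the_base}.

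Second, I would invoke Lemma \ref{lemma:Homotopy_exact_sequence_when_the_kernels_descends_to_the_base} directly on $f'$ to obtain the homotopy exact sequence
\[
\pi_{1}^{N}(X'_{s'},x')\;\longrightarrow\;\pi_{1}^{N}(X',x')\;\longrightarrow\;\pi_{1}^{N}(S',s')\;\longrightarrow\;1
\]
for appropriate rational points $s'\in S'(k)$ and $x'\in X'(k)$ lying above $s'$. Since $f'$ is the pull-back of $f$ along $S'\to S$, the geometric fiber $X'_{s'}$ is isomorphic to $X_{s}$, where $s$ is the image of $s'$ in $S$, and the rational point $x'$ pushes down to a rational point $x\in X(k)$ over $s$.

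Third, I would show that Zhang's base change condition, in the simplified form obtained in Section \ref{subsec:Base_change_condition}, holds for $f$ at the pair $(x,s)$. The idea is that, since $S'\to S$ is finite surjective and the condition for $f'$ is already established (via the above exact sequence and the equivalences recalled from \cite{ZhangL13EGF}), the condition for $f$ over a point $s$ whose preimage in $S'$ contains $s'$ can be deduced by descent, provided $s$ is chosen outside certain countably many closed subsets of $S$ corresponding to the (countably many) Nori-reduced sub-torsors of the pure part of the universal torsor of $X$; here the hypothesis that $k$ is uncountable and algebraically closed ensures such an $s\in S(k)$ exists. Applying the equivalence of Zhang's conditions then yields the desired exact sequence for $f$.

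The main obstacle is the third step, namely extracting a base change condition for $f$ from the exact sequence already known for $f'$: one must verify that the finite torsor data on $X$ witnessing potential failure of the condition at a point $s$ can be controlled torsor-by-torsor through the tower $X'\to X$, using the classification of torsors over $X$ into pure, pull-back and mixed types (Definition \ref{definition:Pure_pull-back_and_mixed_torsor}) together with Proposition \ref{prop:Pull-back_of_pure_torsors_to_the_geometric_generic_fiber} and Lemma \ref{lemma:Lemma_eta-xi}, and that only countably many ``bad'' loci arise, so that the uncountability of $k$ delivers a simultaneous good choice of $(s,x)$.
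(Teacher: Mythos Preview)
Your first two steps match the paper's proof exactly: reduce via Proposition \ref{proposition:Finiteness_of_the_kernel} and Corollary \ref{coro:Consequence_of_a_finite_kernel} to a base-changed fibration $f':X'\to S'$ to which Lemma \ref{lemma:Homotopy_exact_sequence_when_the_kernels_descends_to_the_base} applies, and record that $X'_{s'}\cong X_s$ for $s$ the image of $s'$.

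Your third step, however, is an unnecessary detour, and as stated it is not a proof. You propose to re-verify Zhang's base change condition for $f$ at $s$ by some unspecified descent along $S'\to S$ and a further countability argument. But the point $s$ is already fixed as the image of $s'$, so there is no further ``choice outside countably many closed subsets'' available to you; and the descent of the base change condition for arbitrary Nori-reduced torsors on $X$ through the finite cover $X'\to X$ is not something you have set up. The countability of torsors and the uncountability of $k$ were already consumed inside Lemma \ref{lemma:Homotopy_exact_sequence_when_the_kernels_descends_to_the_base} applied to $f'$; invoking them again for $f$ has no content.

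The paper avoids all of this with a two-line diagram chase, using precisely the observation you made but did not exploit. One has a commutative diagram
\[
\xymatrix{
\pi_{1}^{N}(X'_{s'},x') \ar[r] \ar[d] & \pi_{1}^{N}(X',x') \ar[d] \ar[r]^{\pi^{N}(f')} & \pi_{1}^{N}(S',s') \ar[d] \\
\pi_{1}^{N}(X_{s},x) \ar[r] & \pi_{1}^{N}(X,x) \ar[r]^{\pi^{N}(f)} & \pi_{1}^{N}(S,s)
}
\]
in which the vertical arrows are closed immersions and, by the proof of Proposition \ref{prop:Properties_of_the_universal_pull-back}, $\ker\pi^{N}(f')=\ker\pi^{N}(f)$. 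The exact sequence for $f'$ says that $\pi_{1}^{N}(X'_{s'},x')\to\ker\pi^{N}(f')$ is faithfully flat. Since $\pi_{1}^{N}(X'_{s'},x')\to\pi_{1}^{N}(X_{s},x)$ is a closed immersion and the square commutes, the map $\pi_{1}^{N}(X_{s},x)\to\ker\pi^{N}(f)$ is faithfully flat as well, which is exactly the middle exactness you need. Replace your third step by this argument and the proof is complete.
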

For the proof of this result, we will verify one of the equivalent conditions for the exact sequence above outlined by L. Zhang in \cite{ZhangL13EGF}, these are the following: 
\begin{theorem}[\cite{ZhangL13EGF}] \label{teo:Homotopy_exact_sequence_conditions}
	Let $f:X \rightarrow S$ be a proper morphism, with reduced and connected geometric fibers, between two reduced and connected locally noetherian schemes over a perfect field $k$. We additionally suppose that $S$ is irreducible and we take $x \in X(k)$ and $s \in S(k)$ such that $f(x)=s$. Then the following statements are equivalent:
	\begin{enumerate}
		\item The sequence
		$$ \pi_{1}^{N}(X_{s},x) \rightarrow \pi_{1}^{N}(X,x) \rightarrow \pi_{1}^{N}(S,s) \rightarrow 1$$
		is exact.
		\item For any Nori-reduced $G$-torsor $t:T \rightarrow X$ with $G$ finite, the vector bundle $t_{*}(\Strsh{T})$ satisfies the base change condition at $s$ and the image of the composition $\pi_{1}^{N}(X_{s},x) \rightarrow \pi_{1}^{N}(X,x) \rightarrow G$ is a normal subgroup-scheme of $G$.
		\item For any Nori-reduced $G$-torsor $t:T \rightarrow X$ with $G$ finite, the vector bundle $t_{*}(\Strsh{T})$ satisfies the base change condition at $s$ and there exists a Nori-reduced $G^{\prime}$-torsor $t^{\prime}:T^{\prime} \rightarrow S$ with an equivariant morphism $\theta:T \rightarrow T^{\prime}$ such that the induced map $\left(t^{\prime}\right)_{*}(\Strsh{T^{\prime}})_{s} \rightarrow f_{*}\left(t_{*}(\Strsh{T})\right)_{s}$ of fibers over $s$ coming from $\theta$ is an isomorphism. 
	\end{enumerate} 
	Moreover, if $X$ and $S$ are proper, then we can add an additional equivalent condition. Namely:
	\begin{itemize}
		\item[(4)] For any Nori-reduced $G$-torsor $t:T \rightarrow X$ with $G$ finite, the vector bundle $t_{*}(\Strsh{T})$ satisfies the base change condition at $s$ and $f_{*}\left(t_{*}(\Strsh{T})\right)$ is essentially finite over $S$.
	\end{itemize}
\end{theorem}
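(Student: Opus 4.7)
The plan is to verify the equivalences via the cyclic implications $(1) \Rightarrow (2) \Rightarrow (3) \Rightarrow (1)$ and then separately treat $(3) \Leftrightarrow (4)$ under the properness hypothesis. The whole proof is driven by the tannakian dictionary of Definition~\ref{defi:Having_a_fundamental_group-scheme} and Remark~\ref{remark:Nori_reduced_torsors_in_terms_of_the_FGS}: a Nori-reduced $G$-torsor $t\colon T\to X$ corresponds to a faithfully flat $\pi_1^N(X,x)\twoheadrightarrow G$, and under this correspondence the bundle $V_T=t_*\mathcal{O}_T$ is the regular representation. The base change condition at $s$ will be interpreted, as is standard, as the assertion that $f_*V_T\otimes\kappa(s)\to H^0(X_s,V_T|_{X_s})$ is an isomorphism; combined with Lemma~\ref{lemma:Zhangs_lemma} and Corollary~\ref{coro:Global_sections_of_bundles_coming_from_torsors}, this quantity is controlled by the maximal trivial sub-bundle of $V_T|_{X_s}$, which, restricting $T$ to the fiber, equals the order of $G/H$ where $H$ is the image of $\pi_1^N(X_s,x)\to G$.

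For $(1)\Rightarrow(2)$, the exactness of the sequence means that for every Nori-reduced $T\to X$ with group $G$, the image $H$ of $\pi_1^N(X_s,x)\to G$ coincides with the kernel of the map $G\to G'$ induced by the factorization $\pi_1^N(X,x)\twoheadrightarrow\pi_1^N(S,s)\to G'$; in particular $H$ is normal. To deduce the base change condition one uses the associated pull-back torsor $S'\to S$ with group $G'=G/H$ (which exists by the equivalence with $\pi_1^N(S,s)\to G'$) and observes that $f_*V_T$ already contains $\left(t'\right)_*\mathcal{O}_{S'}$ as an essentially finite sub-bundle of the right rank $\operatorname{ord}(G/H)$; a dimension count via Corollary~\ref{coro:Global_sections_of_bundles_coming_from_torsors} on $X_s$ and semicontinuity of $h^0$ forces equality of the natural map on fibers.

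For $(2)\Rightarrow(3)$, the normality of $H$ lets one form the quotient torsor $T/H\to X$ with group $G/H$, whose restriction to $X_s$ is trivial by the very definition of $H$. Taking $T'=T/H$ as a torsor on $X$ and combining with the base change condition (applied to $V_{T/H}\subset V_T$), one descends $T/H$ to a torsor $T''\to S$ whose pull-back to $X$ recovers $T/H$: the trivialization on the fiber spreads because the dimension of $f_*V_{T/H}$ equals $\operatorname{ord}(G/H)$ and this sub-bundle is essentially finite on $S$ via Lemma~\ref{lemma:If_the_pull-back_of_a_bundle_is_ess_finite_then_it_is_ess_finite}. The required equivariant map $T\to T''$ is then the composition $T\to T/H\to T''$, and the isomorphism on stalks over $s$ is read off directly from the base change. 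The converse $(3)\Rightarrow(1)$ is a tannakian game: the equivariant map $T\to T'$ forces the image of $\pi_1^N(X_s,x)\to G$ to land in $\ker(G\to G')$, so the composition $\pi_1^N(X_s,x)\to\pi_1^N(X,x)\to\pi_1^N(S,s)$ vanishes on every finite quotient and is therefore trivial; faithful flatness of the second arrow follows from Proposition~\ref{prop:If_the_morphism_of_S-FGSs_is_faithfully_flat_then_so_is_the_morphism_between_FGSs} together with condition~(3) applied to the $G'$-torsor $T'$.

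Finally, for $(3)\Leftrightarrow(4)$ in the proper case, one uses that under properness $T''\to S$ being a Nori-reduced torsor is equivalent to $\left(t''\right)_*\mathcal{O}_{T''}$ being essentially finite, by the tannakian description of $\pi_1^N(S,s)$ (Proposition~\ref{prop:Properties_of_the_Nori_and_S-fundamental_group-schemes}); essential finiteness of $f_*V_T$ on $S$ then yields such a $T''$ and the equivariant morphism via tannakian duality. I expect the main obstacle to be the descent step in $(2)\Rightarrow(3)$: one must translate the pointwise base change condition into genuine descent of a torsor from $X$ to $S$, and this is where Zhang's argument is most delicate, requiring a careful combination of Lemma~\ref{lemma:Zhangs_lemma} on fibers, the semicontinuity of $h^0$, and the descent criterion for essentially finite bundles along a faithfully flat morphism with trivial generic fiber-structure.
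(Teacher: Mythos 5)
You should first be aware that the paper does not prove this statement at all: it is quoted verbatim from \cite{ZhangL13EGF} and used as a black box, so there is no internal proof to compare your outline against. Judged on its own merits, your cyclic scheme $(1)\Rightarrow(2)\Rightarrow(3)\Rightarrow(1)$ is a reasonable skeleton, and your treatment of $(1)\Rightarrow(2)$ (identifying the image $H$ of $\pi_{1}^{N}(X_{s},x)\rightarrow G$ with the image of $\ker(\pi^{N}(f))$ in $G$, exhibiting $(t^{\prime})_{*}(\Strsh{S^{\prime}})$ inside $f_{*}(t_{*}(\Strsh{T}))$ and forcing surjectivity of the base change map by the dimension count $\dim H^{0}(X_{s},V_{T}|_{X_{s}})=\mathrm{ord}(G/H)$ via Corollary \ref{coro:Global_sections_of_bundles_coming_from_torsors}) is essentially sound.

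There are, however, two genuine gaps. First, in $(3)\Rightarrow(1)$ you only establish that the composite $\pi_{1}^{N}(X_{s},x)\rightarrow\pi_{1}^{N}(S,s)$ is trivial and that $\pi_{1}^{N}(X,x)\rightarrow\pi_{1}^{N}(S,s)$ is faithfully flat; this gives exactness at $\pi_{1}^{N}(S,s)$ and the containment $\mathrm{im}\subseteq\ker$, but says nothing about the reverse containment $\ker(\pi_{1}^{N}(X,x)\rightarrow\pi_{1}^{N}(S,s))\subseteq\mathrm{im}(\pi_{1}^{N}(X_{s},x))$, which is the heart of the theorem. This is precisely where the base change condition must be used (via the Esnault--Hai--Sun type criterion: an essentially finite bundle on $X$ whose restriction to $X_{s}$ is trivial must descend to $S$), and your outline never invokes it for this purpose. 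Second, in $(2)\Rightarrow(3)$ the descent of $T/H$ to a torsor over $S$ is asserted but not justified: triviality of $T/H$ on the single fiber $X_{s}$, plus the base change condition at the single point $s$, does not make $f_{*}(V_{T/H})$ locally free of rank $\mathrm{ord}(G/H)$ on all of $S$, and your appeal to Lemma \ref{lemma:If_the_pull-back_of_a_bundle_is_ess_finite_then_it_is_ess_finite} is circular, since it presupposes $f^{*}f_{*}(V_{T/H})\cong V_{T/H}$, which is essentially the descent statement you are trying to prove. Note also that condition (3) as stated does not require $T^{\prime}$ to pull back to $T/H$, only an equivariant morphism inducing an isomorphism of fibers at $s$; the correct construction (Zhang's) takes $T^{\prime}$ built from $f_{*}(t_{*}(\Strsh{T}))$ and its maximal essentially finite quotient structure, which is a genuinely different and more delicate argument than the one you sketch.
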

We see that all of these conditions require a condition over the essentially finite bundle $V=t_{*}(\Strsh{T})$ for all Nori-reduced torsors $t:T \rightarrow X$ known as the ``base change at $s$'' for $s \in S(k)$ and another additional condition. In the next two sections we will define the base change condition, and verify it for our fibration together with one of Zhang's equivalent conditions for the homotopy exact sequence, after choosing a suitable rational point of $S$.
\subsection{The base change condition} \label{subsec:Base_change_condition}
We will begin by defining this condition.
\begin{definition} \label{definition:Base_change_condition}
	Let $f:X \rightarrow S$ be a map of schemes, and $\mathcal{F}$ a coherent sheaf of $\Strsh{X}$-modules. If $s: \Spec(\kappa(s)) \rightarrow S$ is a point, then we have the cartesian diagram:
	$$
	\xymatrix{X_{s} \ar[d]^{g} \ar[r]^{i}  & X \ar[d]^{f} \\
		\Spec(\kappa(s)) \ar[r]_{s} & S}.
	$$
	We say that $\mathcal{F}$ \textbf{satisfies the base change condition} at $s$ if the canonical map
	$$
	s^{*}\left(f_{*}(\mathcal{F}) \right) \rightarrow g_{*}\left( i^{*}(\mathcal{F}) \right)
	$$
	is surjective.
\end{definition}
\begin{remark} \label{remark:Remarks_on_base_change_condition}
	If $f$ is proper, $S$ is locally noetherian, $\mathcal{F}$ is coherent and flat over $S$, then $\mathcal{F}$ satisfies the base change condition at $s$ if and only if the canonical map above is an isomorphism (see \cite[III Thm. 12.11]{Hartshorne}). \\
	The base change condition over a point is a particular case of the general base change condition about the following cartesian diagram:
	$$
	\xymatrix{X^{\prime} \ar[d]^{g} \ar[r]^{v}  & X \ar[d]^{f} \\
		S^{\prime} \ar[r]_{u} & S}.
	$$
	and the canonical arrow
	\begin{equation} \label{equation:General_base_change_condition}
	u^{*}\left(f_{*}(\mathcal{F}) \right) \rightarrow g_{*}\left( v^{*}(\mathcal{F}) \right)
	\end{equation}
	for a quasi-coherent sheaf $\mathcal{F}$ of $X$. For the condition to hold, we demand that the former arrow to be surjective. \\
	Under certain assumptions, this condition holds for a wide family of quasi-coherent sheaves:
	\begin{itemize} 
		\item If $f$ is separated, of finite type and $u:S^{\prime} \rightarrow S$ is a flat morphism of noetherian schemes the arrow \ref{equation:General_base_change_condition} is an isomorphism for all quasi-coherent sheaves \cite[III Prop. 9.3]{Hartshorne}.
		\item Equation \ref{equation:General_base_change_condition} is also an isomorphism if $f$ is affine for all quasi-coherent sheaves of $\Strsh{X}$-modules and for any $u$ (\cite[Lemma 02KG]{stacks-project}). 
	\end{itemize}
\end{remark} 
For our particular case, the base change condition over a point $s$ of $S$ means that we have to prove that the following morphism of vector spaces
\begin{equation} \label{equation:Base_change_condition_on_points_unpacked}
f_{*}(\mathcal{F})_{s} \otimes_{\Strsh{S,s}} \kappa(s) \rightarrow \Gamma(X_{s},\left. \mathcal{F} \right|_{X_{s}})
\end{equation}
is surjective, and thus an isomorphism. \\
Also, as the generic point $\eta:\Spec{K} \rightarrow S$ of $S$ is a flat morphism, the base change condition is always generically satisfied. This implies for coherent sheaves on $X$ that there is an open set $U_{\mathcal{F}}$, containing $\eta$, such that for all closed points $s \in U_{\mathcal{F}}$, the base change condition is also satisfied at $s$ \cite[III Thm. 12.11 (a)]{Hartshorne}. \\
Now let us take $f:X \rightarrow S$ as in section \ref{subsec:Setting_and_notation} and let $t:T \rightarrow X$ be a Nori-reduced pointed $G$-torsor over $X$, corresponding to a faithfully flat arrow $\pi_{1}^{N}(X) \rightarrow G$, we need to prove that the vector bundle $V=t_{*}(\Strsh{T})$ satisfies the base change condition over a certain rational point of $S$. \\
We start with pull-back torsors:
\begin{proposition} \label{prop:Base_change_for_pull-backs}
	Let $t^{\prime}:X^{\prime} \rightarrow X$ be a $G$-torsor over $X$. Assume that $X^{\prime}$ is the pull-back of a $G$-torsor $p:S^{\prime} \rightarrow S$, then it satisfies the base change condition for all rational points of $S$.
\end{proposition}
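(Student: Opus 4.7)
The plan is to use the fact that $V = t'_{*}(\Strsh{X'})$ is itself a pull-back from $S$, which should make the base change computation essentially trivial.

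First, I would invoke the assumption $X' = S' \times_{S} X$ together with flat base change (or a direct computation, since $p$ and $t'$ are affine and everything is compatible with arbitrary base change). This gives the identification
$$ V = t'_{*}(\Strsh{X'}) \;\cong\; f^{*}\bigl( p_{*}(\Strsh{S'}) \bigr). $$
Write $W := p_{*}(\Strsh{S'})$, which is a vector bundle on $S$ of rank $\operatorname{ord}(G)$.

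Next I would compute $f_{*}(V)$ using the projection formula: since $V = f^{*}(W)$ with $W$ locally free,
$$ f_{*}(V) \;=\; f_{*}\bigl( f^{*}(W) \bigr) \;\cong\; W \otimes_{\Strsh{S}} f_{*}(\Strsh{X}) \;=\; W, $$
where the last equality uses $f_{*}(\Strsh{X}) = \Strsh{S}$ (which holds under the standing hypotheses of Section \ref{subsec:Setting_and_notation}, since $f$ is proper, flat, with geometrically reduced and connected fibers). Then for any rational point $s \in S(k)$, the left-hand side of the base change map at $s$ is
$$ s^{*}(f_{*}(V)) \;=\; W_{s} \otimes_{\Strsh{S,s}} \kappa(s), $$
which is a $\kappa(s)$-vector space of dimension $\operatorname{ord}(G)$.

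For the right-hand side, I would compute the restriction $V|_{X_s}$. By the cartesian diagram defining the fiber, $V|_{X_s} = (f|_{X_s})^{*}(s^{*}W)$, which is a trivial bundle on $X_{s}$ of rank $\operatorname{ord}(G)$. Its global sections are then $W_{s} \otimes_{\Strsh{S,s}} \Gamma(X_{s},\Strsh{X_{s}})$, and since $X_{s}$ is geometrically reduced and connected over $k$ algebraically closed, $\Gamma(X_{s},\Strsh{X_{s}}) = \kappa(s)$. Thus the right-hand side also has dimension $\operatorname{ord}(G)$, and the canonical comparison map is identified with the identity on $W_{s} \otimes \kappa(s)$, hence is an isomorphism (in particular surjective).

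I do not expect any serious obstacle here; the only thing to double-check carefully is that $f_{*}(\Strsh{X}) = \Strsh{S}$ really holds in our setting (so that the projection formula collapses as desired) and that the base-change isomorphism $V \cong f^{*}W$ is genuinely an equality of $\Strsh{X}$-modules (not merely after some twist), both of which follow from the hypotheses on $f$ and the fact that the formation of $t'_{*}\Strsh{X'}$ commutes with arbitrary base change because $t'$ is affine (\cite[Lemma 02KG]{stacks-project}, cited in Remark \ref{remark:Remarks_on_base_change_condition}).
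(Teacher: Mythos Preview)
Your proposal is correct and follows essentially the same approach as the paper: both identify $f_{*}(t'_{*}\Strsh{X'})$ with $p_{*}(\Strsh{S'})$ (you via $V\cong f^{*}W$ plus the projection formula, the paper via $f_{*}\circ t'_{*}=p_{*}\circ f'_{*}$ and $f'_{*}\Strsh{X'}=\Strsh{S'}$) and then observe that $V|_{X_s}$ is trivial of rank $\operatorname{ord}(G)$ to conclude. Your write-up is in fact slightly more explicit than the paper's, since you identify the comparison map with the identity on $W_{s}\otimes\kappa(s)$ rather than only matching dimensions.
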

\proof  From our hypotheses, we have the following cartesian diagram
$$\xymatrix{X^{\prime} \ar[d]_{t^{\prime}} \ar[r]^{f^{\prime}} & S^{\prime} \ar[d]^{p^{\prime}} \\
	X \ar[r]^{f} & S}. $$
As the fibers for both $f$ and $f^{\prime}$ are connected, we have that $f_{*}(\Strsh{X})=\Strsh{S}$ and $f_{*}^{\prime}(\Strsh{X}^{\prime})=\Strsh{S}^{\prime}$ and thus we have that $f_{*}(t_{*}^{\prime}(\Strsh{X^{\prime}} )=p_{*}^{\prime}(\Strsh{S^{\prime}})$ is a vector bundle on $S$ and thus it has a constant rank throughout the points of $S$. Let $r$ be the rank of $t_{*}^{\prime}(\Strsh{X^{\prime}})$, from the last equality $f_{*}(t_{*}^{\prime}(\Strsh{X^{\prime}}))$ has the same rank as it is equal to the order of $G$ for both bundles. \\
On the other hand, as the pull-back of $X^{\prime}$ to $X_{s}$ is trivial for all points $s \in S$, we have that $r=\Gamma(X_{s},\left. t_{*}^{\prime}(\Strsh{X^{\prime}}) \right|_{X_{s}})$ which finishes the proof as we have an isomorphism on the equation \ref{equation:Base_change_condition_on_points_unpacked}.
\endproof
Now we focus our attention to pure torsors:
\begin{proposition} \label{prop:Base_change_for_pure_torsors}
	Let $t:T \rightarrow X$ be a pure Nori-reduced torsor over $X$, there exists an open set $U_{T} \subset S$, containing the generic point of $S$, such that for any rational point $s \in U_{T}$, the pull-back of $T$ to the fiber $X_{s}$ is Nori-reduced and $V=t_{*}(\Strsh{T})$ satisfies the base change condition at $s$. 
\end{proposition}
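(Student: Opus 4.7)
The plan is to use the semicontinuity theorem and cohomology-and-base-change to produce an open neighborhood $U_T$ of the generic point $\eta \in S$ on which the function $\varphi: s \mapsto h^0(X_s, V|_{X_s})$ equals $1$; both conclusions of the proposition will then follow for every $s \in U_T$.

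First, I would reduce the statement to an assertion about $\varphi$. Since $t:T \rightarrow X$ is finite and faithfully flat, $V$ is a locally free $\Strsh{X}$-module, and since $f$ is flat, $V$ is flat over $S$; this is what lets the base-change machinery engage. The morphism $t$ is also affine, so pushforward commutes with arbitrary base change and $V|_{X_s} = (t_s)_*(\Strsh{T_s})$ for every $s \in S$. With this identification, the characterization of Nori-reducedness recalled in Section \ref{subsec:Global_sections_ess_fin_bundles} shows that, for a rational point $s$, the pull-back $T_s \rightarrow X_s$ is a Nori-reduced $G$-torsor precisely when $\varphi(s)=1$.

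Second, I would compute $\varphi$ at $\eta$. Proposition \ref{prop:Pull-back_of_pure_torsors_to_the_geometric_generic_fiber} applied to the pure Nori-reduced torsor $T$ gives that $T_{\bar{\eta}} \rightarrow X_{\bar{\eta}}$ is Nori-reduced, so $h^0(X_{\bar{\eta}}, V|_{X_{\bar{\eta}}}) = 1$. Flat base change along the field extension $\bar{\eta} \rightarrow \eta$ then transports this equality to $\varphi(\eta)=1$. On the other hand, since all geometric fibers of $f$ are reduced and connected, $\kappa(s)$ embeds in $\Gamma(X_s, V|_{X_s})$ for every $s \in S$ via the structural morphism of $T_s$, so $\varphi(s) \geq 1$ everywhere. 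Upper semicontinuity of $\varphi$ \cite[III Theorem 12.8]{Hartshorne} then produces an open set $U_T \subset S$ containing $\eta$ on which $\varphi \equiv 1$.

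Finally, the cohomology and base change theorem \cite[III Theorem 12.11]{Hartshorne} applied on $U_T$, where $\varphi$ is locally constant of value $1$, yields that $f_*V$ is locally free of rank one there and that the canonical map $(f_*V)_s \otimes_{\Strsh{S,s}} \kappa(s) \rightarrow H^0(X_s, V|_{X_s})$ is an isomorphism for every $s \in U_T$; this is exactly the base change condition at $s$. For any rational point $s \in U_T$, the equality $\varphi(s)=1$ simultaneously forces $T_s \rightarrow X_s$ to be Nori-reduced, which completes the plan. I do not expect a serious obstacle here: the argument is a direct assembly of standard results, and the only non-trivial ingredient is Proposition \ref{prop:Pull-back_of_pure_torsors_to_the_geometric_generic_fiber}, which has already been established and is precisely what turns ``pure'' into the numerical input $\varphi(\eta)=1$ that drives everything else.
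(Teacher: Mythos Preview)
Your proposal is correct and follows essentially the same route as the paper: both invoke Proposition \ref{prop:Pull-back_of_pure_torsors_to_the_geometric_generic_fiber} to get $h^{0}(X_{\bar\eta},V|_{X_{\bar\eta}})=1$, then use semicontinuity and cohomology-and-base-change to produce the desired open set. The only cosmetic difference is that the paper builds two open sets (one for base change, one for Nori-reducedness) and intersects them, whereas you extract both conclusions directly from the single locus $\{\varphi=1\}$; note that the precise reference for deducing base change from constancy of $h^{0}$ is Grauert's theorem \cite[III Corollary 12.9]{Hartshorne} rather than Theorem 12.11.
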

\proof
Let $t:T \rightarrow X$ be a pure Nori-reduced torsor and $V=t_{*}(\Strsh{T})$ as we pointed out before, there exists an open set $U \subset S$ that contains the generic point of $S$ in which the base change condition for $V$ is satisfied at all of the rational points contained within. \\
Moreover, as the pull-back of $T$ to the geometric generic fiber $X_{\bar{\eta}}$ is Nori-reduced by Proposition \ref{prop:Pull-back_of_pure_torsors_to_the_geometric_generic_fiber} and thus $h^{0}(X_{\bar{\eta}},\left.V \right|_{X_{\bar{\eta}}})=1$ and by the semi-continuity theorem \cite[III Thm. 12.8]{Hartshorne}, there exist an open set $U^{\prime} \subset S$ such that $h^{0}(X_{\bar{\eta}},\left.V \right|_{X_{\bar{\eta}}})=1$ as well, which implies that the pull-back of $T$ to $X_{s}$ is Nori-reduced, for $s \in U^{\prime}(k)$.\\
Finally, by taking $U_{T}=U \cap U^{\prime}$ we finish the proof.
\endproof
Now the only Nori-reduced torsors remaining are the mixed ones:
\begin{proposition} \label{prop:Base_change_for_mixed_torsors}
	Let $t:T \rightarrow X$ be a mixed Nori-reduced $G$-torsor. If we write $H=G/K$ where $H$ is the group-scheme corresponding to the maximal pull-back quotient of $T$, there exists an open set $U_{T} \subset S$, containing the generic point of $S$, such that for any $s \in U_{T}(k)$, the maximal Nori-reduced sub-torsor of the pull-back of $T$ to $X_{s}$ is a $K$-torsor, and $V=t_{*}(\Strsh{T})$ satisfies the base change condition at $s$.
\end{proposition}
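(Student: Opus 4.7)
The plan is to reduce the claim to Proposition \ref{prop:Base_change_for_pure_torsors} applied to the pure Nori-reduced $K$-torsor $q:T\to X'$ with respect to the base change $f':X'\to S'$ of $f$, and then transfer the conclusion back to $S$ along the $H$-torsor $t':S'\to S$. By Lemma \ref{lemma:Characterizing_the_induced_morphism_between_torsors_pull-back_case}, $f'$ satisfies the hypotheses of Section \ref{subsec:Setting_and_notation}, and by Remark \ref{remark:Remarks_on_torsor_types} the quotient $q$ is indeed pure Nori-reduced with respect to $f'$. Proposition \ref{prop:Base_change_for_pure_torsors} therefore produces an open set $U'_T\subset S'$ containing the generic point $\xi$ of $S'$ such that for every $s'\in U'_T(k)$ the pull-back $T\times_{X'}X'_{s'}$ is a Nori-reduced $K$-torsor and $q_{*}(\Strsh{T})$ satisfies the base change condition at $s'$.

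Set $U_T := t'(U'_T)\subset S$. Since $t'$ is finite and faithfully flat, hence open, $U_T$ is open and contains $\eta=t'(\xi)$. For $s\in U_T(k)$, choose $s'\in U'_T(k)$ lying over $s$ (possible since $k$ is algebraically closed); the rational point $s'$ trivialises the $H$-torsor $X'_s \cong X_s\times_k H$ over $X_s$ and identifies $X'_{s'}\cong X_s$. Under this identification, the Nori-reduced $K$-torsor $T\times_{X'}X'_{s'}$ sits as a closed sub-scheme $\bar T_s\hookrightarrow T_s:=T\times_X X_s$ and is a $K$-torsor over $X_s$. Letting $G'_s\subseteq G$ denote the image of the classifying morphism $\pi_1^{N}(X_s)\to G$ of $T_s$, the composition $\pi_1^{N}(X_s)\to G\twoheadrightarrow G/K=H$ classifies the trivial $H$-torsor $X'_s$ and is therefore zero, so $G'_s\subseteq K$; the existence of the Nori-reduced $K$-sub-torsor $\bar T_s\hookrightarrow T_s$ then forces $K\subseteq G'_s$, whence $G'_s=K$ and $\bar T_s$ is the maximal Nori-reduced sub-torsor of $T_s$.

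For the base change condition at $s$, Corollary \ref{coro:Global_sections_of_bundles_coming_from_torsors} gives $\dim_k\Gamma(X_s,\left.V\right|_{X_s})=\text{ord}(G/K)$ for every $s\in U_T(k)$, matching the dimension at the geometric generic fibre by Corollary \ref{coro:The_maximal_sub-torsor_of_a_mixed_torsor_over_the_geometric_generic_fiber_is_the_biggest_one_possible}. Upper semi-continuity of $s\mapsto h^{0}(X_s,\left.V\right|_{X_s})$ together with Zariski density of closed points in opens over the algebraically closed field $k$ forces this function to be constant, equal to $\text{ord}(G/K)$, on all of $U_T$. Since $V=t_{*}(\Strsh{T})$ is locally free on $X$ and $f$ is flat, $V$ is $S$-flat, and Grauert's theorem then yields that $f_{*}V$ is locally free on $U_T$ and the natural map $f_{*}(V)_s\otimes\kappa(s)\to\Gamma(X_s,\left.V\right|_{X_s})$ is an isomorphism for every $s\in U_T(k)$, verifying the base change condition. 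The main technical obstacle I expect is the scheme-theoretic identification of $\bar T_s$ inside $T_s$ and the verification that $\bar T_s$ realises the maximal Nori-reduced sub-torsor; once this is in place the base change portion is a standard application of cohomology-and-base-change.
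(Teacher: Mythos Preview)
Your proof is correct, but it takes a different route from the paper's. The paper argues directly on $S$: it first invokes the general fact that the base change locus for $V=t_{*}(\Strsh{T})$ is an open set $U$ containing $\eta$; separately, it uses Corollary~\ref{coro:The_maximal_sub-torsor_of_a_mixed_torsor_over_the_geometric_generic_fiber_is_the_biggest_one_possible} and Corollary~\ref{coro:Global_sections_of_bundles_coming_from_torsors} to see that $h^{0}(X_{\bar\eta},V|_{X_{\bar\eta}})=\text{ord}(H)$ while $h^{0}(X_s,V|_{X_s})\geq\text{ord}(H)$ for \emph{every} $s\in S$ (since the image of $\pi_1^N(X_s)\to G$ always lands in $K$), so semi-continuity cuts out an open $U'$ on which equality holds; then $U_T=U\cap U'$.

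Your approach instead reduces to the pure case on $S'$ via Proposition~\ref{prop:Base_change_for_pure_torsors}, pushes the resulting open $U'_T$ forward along the finite flat map $t'$, and then recovers both conclusions on $S$ from the identification $X'_{s'}\cong X_s$. This is more structural and reuses the pure case cleanly, at the cost of the extra bookkeeping with fibres of $t'$ and the scheme-theoretic embedding $\bar T_s\hookrightarrow T_s$. One small point worth making explicit in your write-up: the implication ``$\bar T_s$ Nori-reduced $\Rightarrow K\subseteq G'_s$'' uses that, after choosing a base point of $T_s$ lying in $\bar T_s$, the classifying map of $\bar T_s$ is the classifying map of $T_s$ viewed as a map to $K$; its surjectivity gives $G'_s=K$ up to $G$-conjugacy, and normality of $K$ in $G$ removes the ambiguity. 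The paper's argument avoids this by working purely with the dimension $h^0$ and never needing to identify the sub-torsor explicitly. Your derivation of base change from constancy of $h^0$ via Grauert is also valid, whereas the paper simply quotes openness of the base change locus; the two are equivalent here since $S$ is integral.
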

\proof
There exists an open set $U \subset S$ where $V$ satisfies the base change condition at any rational point of $U$. \\
Moreover, as the maximal Nori-reduced sub-torsor of the pull-back $T_{\bar{\eta}}$ of $T$ to the geometric generic fiber $X_{\bar{\eta}}$ is a $K$-torsor by Corollary \ref{coro:The_maximal_sub-torsor_of_a_mixed_torsor_over_the_geometric_generic_fiber_is_the_biggest_one_possible}, as $K$ is the image of the morphism $\pi_{1}^{N}(X_{\bar{\eta}}) \rightarrow G$. \\
Applying Corollary \ref{coro:Global_sections_of_bundles_coming_from_torsors}, we have that $h^{0}(X_{\bar{\eta}},\left. V \right|_{X_{\bar{\eta}}}) = r$ where $r = \text{ord}(H)$, and for any $s \in S(k)$ we have that $h^{0}(X_{s},\left. V \right|_{X_{s}}) \geq r$ as the image of $\pi_{1}^{N}(X_{s}) \rightarrow G$ corresponding to the pull-back of $T$ to $X_{s}$ has its image contained in $K$. Thus, by semi-continuity, there exists an open set $U^{\prime}$ where the maximal Nori-reduced sub-torsor of pull-back torsor $T_{s} \rightarrow X_{s}$ is a $K$-torsor for any $s \in U^{\prime}(k)$.  \\
Finally, by taking $U_{T}=U \cap U^{\prime}$ we conclude the proof.
\endproof
\subsection{Proof of the exact sequence} \label{subsec:Proof_of_main_theorem}
We are ready to finish the proof of Theorem \ref{theorem:Main_theorem}. We start with a remark: 
\begin{remark} \label{remark:Zhangs_conditions_satisfied_generically}
	From generic flatness, good correspondence (Proposition \ref{prop:Good_correspondence_for_finite_group_schemes} and Corollary \ref{corollary:Good_correspondence_of_subgroups_implies_good_correspondence_of_quotient_torsors}), and Corollary \ref{coro:The_maximal_sub-torsor_of_a_mixed_torsor_over_the_geometric_generic_fiber_is_the_biggest_one_possible} when $k$ is algebraically closed, we can colloquially state that, under the right hypotheses, ``\emph{one of Zhang's conditions for the homotopy exact sequence is generically satisfied}'' in the following sense: for any Nori-reduced $G$-torsor $t:T \rightarrow X$ with $G$ finite, the vector bundle $t_{*}(\Strsh{T})$ satisfies the base change condition at $\bar{\eta}$ and the arrow $\pi_{1}^{N}(X_{\bar{\eta}}) \rightarrow G_{\bar{\eta}}$ has normal image. This condition is comparable to Theorem \ref{teo:Homotopy_exact_sequence_conditions} (2). \\
	This implies that for any Nori-reduced torsor $t:T \rightarrow X$, at least one of Zhang's conditions can be satisfied over an open subset of $S$, dependent on $T$, as we have seen in the last section. The hard part of showing the the homotopy exact sequence in this case, is to find a rational point $s \in S(k)$ for which we can verify any of Zhang's conditions for \underline{all} Nori-reduced torsors over $X$. 
\end{remark}
 Now let $f:X \rightarrow S$ be as in section \ref{subsec:Setting_and_notation}, first we recall that the induced morphism $\pi^{N}(f):\pi_{1}^{N}(X) \rightarrow \pi_{1}^{N}(S)$ is faithfully flat (Proposition \ref{prop:If_the_morphism_of_S-FGSs_is_faithfully_flat_then_so_is_the_morphism_between_FGSs}), and secondly we notice that $\ker(\pi^{N}(f))$ is finite from Proposition \ref{proposition:Finiteness_of_the_kernel}. Then, from Corollary \ref{coro:Consequence_of_a_finite_kernel}, we can deduce the following lemma:
\begin{lemma} \label{lemma:Homotopy_exact_sequence_when_the_kernels_descends_to_the_base}
	Let $f:X \rightarrow S$ be as in section \ref{subsec:Setting_and_notation}. Let us suppose that $k=\bar{k}$ is an uncountable field, the torsor $\hat{X} \rightarrow X^{*}$ (Definition \ref{defi:Universal_pull-back_torsor}) is finite and that it descends to a finite torsor $\hat{X}_{0}$ over $X$. \\
	Then, there exists $s \in S(k)$ such that for compatible points, the following sequence
	$$ \pi_{1}^{N}(X_{s},x) \rightarrow \pi_{1}^{N}(X,x) \rightarrow \pi_{1}^{N}(S,s) \rightarrow 1 $$
	is exact.
\end{lemma}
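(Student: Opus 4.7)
The plan is to verify condition (2) of Theorem~\ref{teo:Homotopy_exact_sequence_conditions} for a carefully chosen rational point $s \in S(k)$. Under the hypothesis that $\hat{X} \to X^{*}$ is finite and descends to a finite torsor $\hat{X}_{0} \to X$, Corollary~\ref{coro:Consequence_of_a_finite_kernel} combined with Remark~\ref{remark:What_if_the_kernel_is_finite_and_the_universal_torsor_descends_to_X} yields a split short exact sequence, i.e., an isomorphism $\pi_{1}^{N}(X,x) \cong \pi_{1}^{N}(S,s) \times_{k} K$ where $K := \ker(\pi^{N}(f))$ is a finite group-scheme. The trivial case $\dim S = 0$ is immediate, so assume $\dim S \geq 1$.

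The first step is to observe that the collection of isomorphism classes of finite Nori-reduced torsors over $X$ is \emph{countable}. Indeed, such torsors correspond bijectively to finite quotients $\pi_{1}^{N}(X) \to G$ (Remark~\ref{remark:Nori_reduced_torsors_in_terms_of_the_FGS}), and using the product decomposition $\pi_{1}^{N}(X) \cong \pi_{1}^{N}(S) \times K$ together with Proposition~\ref{prop:FGS_of_an_abelian_variety}, which describes $\pi_{1}^{N}(S)$ as the inverse limit of the finite group-schemes $S[n]$, the number of finite quotients of $\pi_{1}^{N}(S)$ is countable, and since $K$ is finite the number of finite quotients of $\pi_{1}^{N}(X)$ is countable as well.

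The second step is to assemble the local data. For each Nori-reduced $G$-torsor $t\colon T \to X$, exactly one of the three cases of Section~\ref{subsec:Base_change_condition} applies: pull-back (Proposition~\ref{prop:Base_change_for_pull-backs}), pure (Proposition~\ref{prop:Base_change_for_pure_torsors}), or mixed (Proposition~\ref{prop:Base_change_for_mixed_torsors}). In each case one obtains an open set $U_{T} \subset S$ containing $\eta$ such that for every $s \in U_{T}(k)$, the bundle $V_{T} = t_{*}(\Strsh{T})$ satisfies the base change condition at $s$ and the image of $\pi_{1}^{N}(X_{s}) \to G$ is a normal subgroup-scheme of $G$: it is trivial in the pull-back case, all of $G$ in the pure case (because $T_{s}$ is Nori-reduced), and equals the normal subgroup-scheme $K_{T} = \ker(G \to H)$ in the mixed case (by Corollary~\ref{coro:The_maximal_sub-torsor_of_a_mixed_torsor_over_the_geometric_generic_fiber_is_the_biggest_one_possible} combined with semi-continuity).

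The third step exploits the uncountability of $k$. Each $F_{T} := S \setminus U_{T}$ is a proper closed subset of the irreducible positive-dimensional variety $S$, so $F_{T}(k)$ is a proper Zariski closed subset of $S(k)$. As $k$ is uncountable algebraically closed, the union $\bigcup_{T} F_{T}(k)$ over the countable family of Nori-reduced torsors $T$ cannot exhaust $S(k)$; pick $s \in S(k) \setminus \bigcup_{T} F_{T}(k)$ and any compatible $x \in X_{s}(k)$. For this $s$, condition (2) of Theorem~\ref{teo:Homotopy_exact_sequence_conditions} holds for every Nori-reduced torsor over $X$, yielding the desired exact sequence. The main obstacle in this plan is Step~1: the countability of Nori-reduced torsors rests crucially on the finiteness of $K$, which is exactly why the hypothesis that $\hat{X} \to X^{*}$ be finite (and its descent to $X$) is needed to make the uncountability trick in Step~3 effective.
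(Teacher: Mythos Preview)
Your proof is correct and follows essentially the same strategy as the paper: establish that there are only countably many isomorphism classes of Nori-reduced torsors over $X$, associate to each an open subset of $S$ via Propositions~\ref{prop:Base_change_for_pull-backs}--\ref{prop:Base_change_for_mixed_torsors}, and then use the uncountability of $k$ to find a rational point in the intersection, verifying condition (2) of Theorem~\ref{teo:Homotopy_exact_sequence_conditions}. The only organizational difference is in the counting step: you invoke the product decomposition $\pi_{1}^{N}(X)\cong \pi_{1}^{N}(S)\times K$ from Remark~\ref{remark:What_if_the_kernel_is_finite_and_the_universal_torsor_descends_to_X} to count all finite quotients at once, whereas the paper argues directly that pure torsors are quotients of $\hat{X}_{0}$ (hence finitely many) and mixed torsors are indexed by a pull-back quotient $X_{i}$ (countably many, since $S$ is abelian) together with a pure torsor over $X_{i}$ (finitely many), without explicitly writing down the product splitting.
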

\proof
Let $t:T \rightarrow X$ be a pure Nori-reduced torsor, as the pull-back $T \times_{X} X^{*}$ is Nori-reduced (Proposition \ref{prop:Properties_of_the_universal_pull-back}(1)), we have a faithfully flat morphism of torsors $\hat{X} \rightarrow T \times_{X} X^{*}$ over $X^{*}$ and thus a faithfully flat morphism $\hat{X}_{0} \rightarrow T$ over $X$. As both torsors are Nori-reduced, $T$ must be a quotient of $\hat{X}_{0}$ by a normal subgroup-scheme of $\ker(\pi^{N}(f))$ and thus there is a finite amount of isomorphism classes of pure Nori-reduced torsors in this case. This allows us to consider the open set $U_{P}=\bigcap_{T \in \mathcal{P}} U_{T}$ where $\mathcal{P}$ is the finite family of isomorphism classes of pure Nori-reduced torsors over $X$, and $U_{T}$ is the open set defined in Proposition \ref{prop:Base_change_for_pure_torsors}. The finiteness of $\mathcal{P}$ implies that $U_{P}$ is a dense open set of $S$ as it contains its generic point and it has rational points inside. Over any of such rational point $p \in U_{P}$, all pure Nori-reduced torsors satisfy the base change condition at $p$ and their pull-backs to the fiber $X_{p}$ are Nori-reduced. \\
If $t:T \rightarrow X$ is mixed and Nori-reduced, assuming the notation of Proposition \ref{prop:Properties_of_the_universal_pull-back}, if $X_{i}:=S_{i} \times_{S} X$ is its maximal pull-back quotient where $S_{i} \rightarrow S$ is Nori-reduced, by using a similar argument to the one we used for pure torsors, we see that $T \rightarrow X_{i}$ is a quotient torsor of the pure torsor $\hat{X}_{i} \rightarrow X_{i}$, the descent of $\hat{X} \rightarrow X^{*}$ over $X_{i}$, with respect to $f_{i}:X_{i} \rightarrow S_{i}$. This implies there is a finite amount of classes of isomorphic pure torsors over $X_{i}$. As $S$ is an abelian variety, it possesses a countable amount of isomorphism classes of Nori-reduced torsors and thus there is a countable amount of isomorphism classes of pull-back torsors $X_{i}$, so we conclude that there is a countable amount of isomorphism classes of mixed Nori-reduced torsors over $X$. If $\mathcal{M}$ is the family of such isomorphism classes, we see that the intersection $U_{M}=\bigcap_{T \in \mathcal{M}} U_{T}$ where $U_{T}$ is the open subset of $S$ coming from Proposition \ref{prop:Base_change_for_mixed_torsors} is a \textbf{very general} (the complement of a countable union of closed subsets with empty interior) subset of $S$. As $k$ is uncountable, we can find rational points within and thus there exists $m \in U_{M}(k)$ such that any mixed Nori-reduced torsor satisfies the base change condition at $m$ and the maximal Nori-reduced sub-torsor of any pull-back of a mixed torsor over $X_{m}$ is a torsor over the image $\ker(\pi^{N}(f)) \rightarrow \pi_{1}^{N}(X,x) \rightarrow G$ where $G$ is the group-scheme associated to a mixed torsor $T$. \\
Finally, as pull-back torsors satisfy the base change condition at any $s \in S(k)$ in this case from Proposition \ref{prop:Base_change_for_pull-backs}, by choosing a rational point $s \in U_{P} \cap U_{M}$ we have for $x \in X(k)$ over $s$, that the sequence
$$ \pi_{1}^{N}(X_{s},x) \rightarrow \pi_{1}^{N}(X,x) \rightarrow \pi_{1}^{N}(S,s) \rightarrow 1  $$
is exact as we wanted, because we have chosen $s$ such that we satisfy one Zhang's equivalent conditions for the homotopy exact sequence to hold, more specifically the one stated in Theorem \ref{teo:Homotopy_exact_sequence_conditions} (2).
\endproof
\proof[Proof of Theorem \ref{theorem:Main_theorem}]
As $\ker{\pi^{N}(f)}$ is finite, there exist a Nori-reduced torsor $S_{i} \rightarrow S$ and $X_{i} \rightarrow X$ its pull-back torsor, such that the finite torsor $\hat{X} \rightarrow X^{*}$ descends to a torsor $\hat{X}_{i} \rightarrow X_{i}$, pure with respect to $f_{i}:X_{i} \rightarrow S_{i}$. These schemes satisfy the hypotheses of Lemma \ref{lemma:Homotopy_exact_sequence_when_the_kernels_descends_to_the_base} and thus for a rational point $s^{\prime} \in S_{i}(k)$ with have an exact sequence
$$ \pi_{1}^{N}(X_{i,s^{\prime}},x^{\prime}) \rightarrow \pi_{1}(X_{i},x^{\prime}) \rightarrow \pi_{1}^{N}(S_{i},s^{\prime}) \rightarrow 1$$
where $X_{i,s^{\prime}}$ is the fiber of $f_{i}$ over $s^{\prime}$. Let $s,x$ be the images of the points $s^{\prime}$ and $x^{\prime}$ to $X$ and $S$ respectively, we see that we have a commutative diagram
$$\xymatrix{X_{i,s^{\prime}} \ar[r] \ar[d] &X_{i} \ar[d] \ar[r]^{f_{i}} & S_{i} \ar[d] \\
	X_{s} \ar[r] &X \ar[r]^{f} &S}. $$
We can easily see that $X_{i,s^{\prime}} \rightarrow X_{i}$ is a pointed Nori-reduced torsor and thus by taking the FGS of all the schemes involved we obtain the following commutative diagram of group-schemes
$$ \xymatrix{\pi_{1}^{N}( X_{i,s^{\prime}},x^{\prime}) \ar[r] \ar[d] & \pi_{1}^{N}(X_{i},x^{\prime}) \ar[d] \ar[r]^{f_{i}} & \pi_{1}^{N}(S_{i},s^{\prime}) \ar[d] \\
	\pi_{1}^{N}(X_{s},x) \ar[r] & \pi_{1}^{N}(X,x) \ar[r]^{f} & \pi_{1}^{N}(S,s)} $$
where all the vertical arrows are closed immersions. From the proof of Proposition \ref{prop:Properties_of_the_universal_pull-back}, we see that $\ker(\pi^{N}(f))=\ker(\pi^{N}(f_{i}))$ and thus we have additionally the following commutative diagram
$$ \xymatrix{\pi_{1}^{N}( X_{i,s^{\prime}},x^{\prime}) \ar[dd] \ar@{->>}[rd] \ar[rr] & & \pi_{1}^{N}(X_{i},x^{\prime}) \ar[dd] \\
	& \ker(\pi^{N}(f)) \ar[ur] \ar[dr] & \\
	\pi_{1}^{N}(X_{s},x)\ar[rr] \ar[ru] & & \pi_{1}^{N}(X,x)}. $$
From the homotopy exact sequence, the arrow $\pi_{1}^{N}( X_{i,s^{\prime}},x^{\prime}) \rightarrow \ker(\pi^{N}(f))$ is faithfully flat and as the arrow $\pi_{1}^{N}( X_{i,s^{\prime}},x^{\prime}) \rightarrow \pi_{1}^{N}( X_{s},x)$ is a closed immersion, we conclude that $\pi_{1}^{N}( X_{s},x) \rightarrow \ker(\pi^{N}(f))$ is faithfully flat, finishing the proof.
\endproof
\section{Applications to curve-connected varieties} \label{sec:Applications_to_curve_connectedness}
In this chapter we will apply the results of previous chapters to some particular cases of varieties connected by curves, or curve-connected. We will borrow most results and definitions related to these varieties from \cite{GounelasFreeCurves}. \\
One of the few examples of varieties with known fundamental group-schemes, besides abelian varieties, are rationally connected varieties.
\begin{defi}[Definition 3.1 \cite{GounelasFreeCurves}] \label{defi:Varieties_connected_by_genus_g_curves}
	Let $X$ be a variety over $k$, we will not assume $X$ is proper. We say for that $X$ is \textbf{connected by curves of genus g (resp. chain connected)} for some $g \geq 0$, if there exist a proper and flat family of curves $\mathcal{C} \rightarrow Y$ where $Y$ is a variety, whose geometric fibers are proper smooth irreducible curves of genus $g$ (resp. connected curves with smooth irreducible components that are curves of genus $g$), such that there exists a morphism $u:\mathcal{C} \rightarrow X$, making $u^{(2)}:\mathcal{C} \times_{Y} \mathcal{C} \rightarrow X \times_{k} X$ dominant. \\
	Moreover, if $X$ is connected (resp. chain connected) by curves of genus $g$ and $u^{(2)}$ is smooth at the generic point, we say that $X$ is \textbf{separably connected (resp. chain connected) by curves of genus g}.
\end{defi}
This is a generalization of the Definition \ref{defi:Rationally_connected_varieties} which is of course the latter definition with $g=0$. \\
The observation of Remark \ref{remark:Remarks_rationally_connected_varieties} also holds in general, meaning that in characteristic zero, varieties connected by curves of genus $g$ ($g \geq 0$) are separably connected by curves of genus g. In positive characteristic, the latter condition is stronger than the former. \\
Another notion of curve-connected varieties is the following:
\begin{defi}[Definition 3.3 \cite{GounelasFreeCurves}]
	Let $X$ be a variety over $k$, we will not assume $X$ is proper. Let $C$ be a curve, we say that $X$ is \textbf{$C$-connected} if there exist a variety $Y$ and a morphism $Y \times_{k} C \rightarrow X$, such that the induced map $u^{(2)}:\mathcal{C} \times_{k} \mathcal{C} \times_{k} Y \rightarrow X \times_{k} X$ dominant. \\
	Moreover, $u^{(2)}$ is smooth at the generic point, we say that $X$ is \textbf{separably $C$-connected}.
\end{defi}
The following proposition gives a special family of $C$-connected varieties.
\begin{proposition}[Proposition 3.5 \cite{GounelasFreeCurves}] \label{prop:A_special_type_of_C-connected_varieties}
	Let $X$ be a projective and smooth variety over an algebraically closed field and let $f:X \rightarrow C$ be a flat morphism to a smooth and projective curve whose geometric generic fiber is separably rationally connected. Then $X$ is $C$-connected.
\end{proposition}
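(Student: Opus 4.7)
The plan is to take $Y$ to be (an irreducible component of) the scheme of sections of $f$, and the morphism $Y\times_k C\to X$ to be the universal section map. The $C$-connectedness of $X$ will then follow from the fact that a sufficiently positive family of sections of $f$ can be deformed to pass through two general points lying in distinct fibres.

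First, since $f\colon X\to C$ is a proper flat morphism onto a smooth projective curve with separably rationally connected geometric generic fibre, the Graber--Harris--Starr theorem (extended to arbitrary characteristic by de Jong--Starr) guarantees the existence of a section $s_0\colon C\to X$. Secondly, by the defining property of separable rational connectedness, through a general point of the geometric generic fibre there passes a very free rational curve; by attaching finitely many such very free rational curves to $s_0$ at general points of $C$ (treated as a comb) and then smoothing by the standard Koll\'ar--Miyaoka--Mori technique, one obtains a new section $s\colon C\to X$ of $f$ whose relative ``normal'' bundle $s^{*}T_{X/C}$ is as positive as we wish, in particular sufficiently ample that $H^{1}(C,s^{*}T_{X/C}(-D))=0$ for any effective divisor $D$ on $C$ of bounded degree.

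Let $Y$ be the irreducible component of $\mathrm{Hom}_{C}(C,X)$ containing $[s]$, replaced by a dense open subvariety if necessary. Standard deformation theory, together with the vanishing $H^{1}(C,s^{*}T_{X/C})=0$, shows that $Y$ is smooth at $[s]$ of expected dimension and that the universal evaluation morphism
\[
u\colon C\times_{k}Y\longrightarrow X,\qquad (c,[t])\longmapsto t(c),
\]
is smooth at $(c,[s])$ for every closed point $c\in C$. To conclude, I want dominance of the double evaluation
\[
u^{(2)}\colon C\times_{k}C\times_{k}Y\longrightarrow X\times_{k}X,\qquad (c_{1},c_{2},[t])\longmapsto (t(c_{1}),t(c_{2})).
\]
Fix distinct points $c_{1}\neq c_{2}\in C$. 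The vanishing $H^{1}(C,s^{*}T_{X/C}(-c_{1}-c_{2}))=0$ together with global generation of $s^{*}T_{X/C}$ implies that the restricted evaluation $Y\to X_{c_{1}}\times_{k}X_{c_{2}}$, $[t]\mapsto (t(c_{1}),t(c_{2}))$, is smooth at $[s]$ and hence dominant onto an open of $X_{c_{1}}\times_{k}X_{c_{2}}$. Letting $(c_{1},c_{2})$ vary over the open set of distinct pairs, and observing that a general pair of points in $X\times_{k}X$ indeed lies over a pair of distinct points of $C$, we obtain dominance of $u^{(2)}$, which is precisely the definition of $C$-connectedness.

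The main obstacle is the positive-characteristic case. One must exploit the \emph{separability} in the SRC hypothesis in two essential places: first, to apply de Jong--Starr and obtain a section at all, and second, to have genuine very free rational curves available for the comb construction that upgrades $s_{0}$ to a section $s$ with ample $s^{*}T_{X/C}$. In characteristic zero, generic smoothness eliminates the distinction between rational connectedness and separable rational connectedness, and the argument reduces to the classical Graber--Harris--Starr theorem together with the standard comb-smoothing technique of Koll\'ar--Miyaoka--Mori.
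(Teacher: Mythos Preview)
The paper does not prove this proposition at all; it merely quotes it from Gounelas \cite{GounelasFreeCurves}. So there is no proof in the paper to compare against.

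That said, your sketch is correct and is essentially the argument one finds in Gounelas (which in turn rests on Koll\'ar's comb-smoothing techniques and the Graber--Harris--Starr / de Jong--Starr theorem). The logical structure is right: produce a section via de Jong--Starr, glue very free rational curves in fibres to form a comb, smooth to obtain a section $s$ with $s^{*}T_{X/C}$ sufficiently ample, and then use $H^{1}(C,s^{*}T_{X/C}(-c_{1}-c_{2}))=0$ to see that the double evaluation from the component of $\mathrm{Hom}_{C}(C,X)$ through $[s]$ is smooth, hence dominant, at general pairs. Your identification of where separability is used in positive characteristic (existence of a section, and availability of genuinely very free curves for the comb) is accurate.

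One small point of care: when you write ``by attaching finitely many such very free rational curves to $s_{0}$ at general points of $C$'', the curves live in fibres $X_{c}$ and are attached at points of the image $s_{0}(C)$; also, to guarantee a smoothing of the comb one needs the attachment points on the rational curves to be general (so that the relevant $H^{1}$ vanishes on the comb). This is standard and implicit in the Koll\'ar--Miyaoka--Mori argument you invoke, but worth stating if you were writing this out in full.
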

We can describe the FGS of these varieties, by generalizing over positive characteristic, a result of Kollár \cite[Theorem 5.2]{Kollar1993} about the topological fundamental group of a rationally connected fibration of varieties over $\mathbb{C}$. We recall that the FGS of a smooth separably rationally connected variety is trivial (Proposition \ref{prop:FGS_of_rationally_connected_varieties}).
\begin{proposition} \label{prop:FGS_of_a_fibration_with_separably_rationally_connected_geometric_generic_fiber}
	Let $f:X \rightarrow Y$ be a faithfully flat fibration between proper, reduced and connected varieties over an algebraically closed field $k$, such that $\pi^{N}(f):\pi_{1}^{N}(X,x) \rightarrow \pi_{1}^{N}(Y,y)$ is faithfully flat for compatible rational points. We further assume that $X$ is smooth, $Y$ is integral, all geometric fibers are reduced and connected, and the geometric generic fiber is separably rationally connected. Then, $\pi^{N}(f)$ is an isomorphism.
\end{proposition}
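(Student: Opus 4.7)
The strategy is to show that every finite Nori-reduced torsor on $X$ is the pull-back of a finite Nori-reduced torsor from $Y$, and then to conclude using the assumed faithful flatness of $\pi^{N}(f)$.

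The first step is to establish that $\pi_{1}^{N}(X_{\bar{\eta}})$ is trivial, where $\eta$ denotes the generic point of $Y$. Since $X$ is smooth over $k$ and the geometric generic fiber $X_{\bar{\eta}}$ is separably rationally connected, the function field extension $\kappa(\eta)\subset k(X)$ is separable, hence $f$ is generically smooth; in particular $X_{\bar{\eta}}$ is smooth over $\bar{\eta}$. Combined with properness (inherited from $f$) and separable rational connectedness, Proposition \ref{prop:FGS_of_rationally_connected_varieties} gives $\pi_{1}^{N}(X_{\bar{\eta}})=1$.

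The second step exploits this triviality. Every essentially finite bundle on $X_{\bar{\eta}}$ is trivial, since the tannakian category $\mathrm{EF}(X_{\bar{\eta}})$ is equivalent to the category of representations of the trivial group-scheme. In particular, for any finite Nori-reduced $G$-torsor $t:T\rightarrow X$, the restriction of $V_{T}:=t_{*}(\Strsh{T})$ to $X_{\bar{\eta}}$ is a trivial bundle. Lemma \ref{lemma:Essentially_finite_bundles_trivial_over_the_geometric_generic_fiber_are_pull-backs} then applies (its hypotheses being precisely those of the proposition), yielding a Nori-reduced torsor $T'\rightarrow Y$ whose pull-back along $f$ is $T$.

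The third step is a bookkeeping argument. Since $\pi^{N}(f)$ is faithfully flat, the pull-back of a Nori-reduced torsor on $Y$ remains Nori-reduced on $X$; conversely, we have just shown that every Nori-reduced torsor on $X$ descends to $Y$. This provides a bijection between isomorphism classes of finite Nori-reduced torsors on $X$ and on $Y$, compatible with the structure groups. Passing to the projective limits of the corresponding finite quotients, we obtain that the faithfully flat morphism $\pi^{N}(f):\pi_{1}^{N}(X,x)\rightarrow \pi_{1}^{N}(Y,y)$ is in fact an isomorphism.

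\paragraph{Main obstacle.} The delicate point is verifying that $X_{\bar{\eta}}$ is smooth over $\bar{\eta}$, so that Proposition \ref{prop:FGS_of_rationally_connected_varieties} actually applies. In characteristic zero this is automatic by generic smoothness of any dominant morphism between smooth varieties, but in positive characteristic one must use the separable rational connectedness hypothesis on $X_{\bar{\eta}}$ to guarantee separability of $f$ at the generic point, from which the required generic smoothness (and hence smoothness of the whole geometric generic fiber, via the existence of a very free rational curve through any point of $X_{\bar{\eta}}$) is then deduced.
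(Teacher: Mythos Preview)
Your argument is correct and follows essentially the same route as the paper: establish $\pi_{1}^{N}(X_{\bar\eta})=1$ via Proposition~\ref{prop:FGS_of_rationally_connected_varieties}, then apply Lemma~\ref{lemma:Essentially_finite_bundles_trivial_over_the_geometric_generic_fiber_are_pull-backs} to descend objects to $Y$, and combine with the assumed faithful flatness of $\pi^{N}(f)$.

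The only substantive difference lies in the concluding step. The paper applies Lemma~\ref{lemma:Essentially_finite_bundles_trivial_over_the_geometric_generic_fiber_are_pull-backs} to \emph{every} essentially finite bundle on $X$ (all of them trivialize on $X_{\bar\eta}$ once $\pi_{1}^{N}(X_{\bar\eta})=1$), and then invokes the tannakian criterion \cite[2.21 (b)]{DeligneMilneTann} to deduce directly that $\pi^{N}(f)$ is a closed immersion. You instead restrict attention to bundles of the form $t_{*}(\Strsh{T})$ for $T$ Nori-reduced, use the torsor clause of the same lemma, and finish with a projective-limit argument on finite quotients. Both are valid; the paper's route is a one-line citation, while yours makes the mechanism explicit (every finite quotient of $\pi_{1}^{N}(X,x)$ factors through $\pi^{N}(f)$, so the kernel is contained in the intersection of all such kernels, which is trivial).

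Your discussion of the smoothness of $X_{\bar\eta}$ goes beyond what the paper supplies; the paper simply asserts triviality of $\pi_{1}^{N}(X_{\bar\eta})$ without justifying the smoothness hypothesis needed in Proposition~\ref{prop:FGS_of_rationally_connected_varieties}. Your sketch via very free curves landing in the smooth locus is the right idea, though the intermediate claim that separable rational connectedness of $X_{\bar\eta}$ forces separability of $\kappa(\eta)\subset k(X)$ is not needed and is not obviously justified as stated.
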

\proof
If $\eta$ is the generic point of $Y$, we have that $\pi_{1}^{N}(X_{\bar{\eta}})$ is trivial as it is separably rationally connected. This implies that any essentially finite bundle over $X$ is the pull-back of an essentially finite bundle over $Y$ by Lemma \ref{lemma:Essentially_finite_bundles_trivial_over_the_geometric_generic_fiber_are_pull-backs}. Thus, $\pi^{N}(f)$ is also a closed immersion by \cite[2.21 (b)]{DeligneMilneTann}.
\endproof
We remark that in more generality, it suffices that $\pi_{1}^{N}(X_{\bar{\eta}})$ is trivial to obtain an isomorphism. We can apply this theorem to a our particular $C$-connected fibered varieties:
\begin{corollary} \label{corollary:FGS_of_particular_family_of_C-connected_varieties}
	Keeping the hypotheses of Proposition \ref{prop:A_special_type_of_C-connected_varieties}, the morphism $f:X \rightarrow C$ making $X$ a $C$-connected variety induces an isomorphism of fundamental group-schemes.
\end{corollary}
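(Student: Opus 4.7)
The plan is to apply Proposition~\ref{prop:FGS_of_a_fibration_with_separably_rationally_connected_geometric_generic_fiber} to $f:X\to C$, so the work is to verify its hypotheses; the conclusion on $\pi^{N}(f)$ is exactly what the corollary asserts.

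First I would gather the structural facts. Both $X$ and $C$ are proper, reduced and connected varieties over the algebraically closed field $k$; $X$ is smooth and $C$ is integral (being a smooth projective curve). The morphism $f$ is flat and proper by hypothesis; it is surjective because $X_{\bar\eta}$ is non-empty (being separably rationally connected), so $f$ is faithfully flat. A Stein factorization $f=g\circ f'$ with $f':X\to C'$ geometrically connected and $g:C'\to C$ finite shows that $g$ is birational (as $X_{\bar\eta}$ is geometrically integral) and therefore an isomorphism (as $C$ is smooth and $C'$ is normal), so $f$ has geometrically connected fibers and $f_{*}\Strsh{X}=\Strsh{C}$. Then \cite[Lemma~8.1]{LangerSFGSI} gives that $\pi^{S}(f)$ is faithfully flat, and Proposition~\ref{prop:If_the_morphism_of_S-FGSs_is_faithfully_flat_then_so_is_the_morphism_between_FGSs} lifts this to faithful flatness of $\pi^{N}(f)$.

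The one hypothesis that is not automatic is that every geometric fiber of $f$ be reduced, which can fail in positive characteristic (multiple fibers may occur even on a smooth total space), and this is the main obstacle to a verbatim application of Proposition~\ref{prop:FGS_of_a_fibration_with_separably_rationally_connected_geometric_generic_fiber}. I would bypass this via the remark following that proposition, which states that triviality of $\pi_{1}^{N}(X_{\bar\eta})$ alone is sufficient. That triviality is granted by Proposition~\ref{prop:FGS_of_rationally_connected_varieties}, since $X_{\bar\eta}$ is smooth, proper and separably rationally connected. As $X$ is normal, Remark~\ref{remark:Simplification_of_pull-back_of_an_essentially_finite_in_the_normal_case} allows Lemma~\ref{lemma:Essentially_finite_bundles_trivial_over_the_geometric_generic_fiber_are_pull-backs} to be invoked without needing all geometric fibers reduced, and hence every essentially finite bundle on $X$ descends to one on $C$. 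By \cite[2.21~(b)]{DeligneMilneTann} this forces $\pi^{N}(f)$ to be a closed immersion, and combined with the faithful flatness above it is an isomorphism.
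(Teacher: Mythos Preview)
Your approach mirrors the paper's: both establish faithful flatness of $\pi^N(f)$ and then invoke Proposition~\ref{prop:FGS_of_a_fibration_with_separably_rationally_connected_geometric_generic_fiber}. The paper obtains faithful flatness directly from a corollary of \cite[II Prop.~6]{NoriTFGS81} and then simply asserts that the remaining hypotheses of that proposition are met; your route via Stein factorization, $f_*\Strsh{X}=\Strsh{C}$, \cite[Lemma~8.1]{LangerSFGSI} and Proposition~\ref{prop:If_the_morphism_of_S-FGSs_is_faithfully_flat_then_so_is_the_morphism_between_FGSs} is a valid alternative and arguably more self-contained within the paper.

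You are right to flag that ``all geometric fibers reduced'' is not an automatic consequence of the hypotheses of Proposition~\ref{prop:A_special_type_of_C-connected_varieties}, a point the paper's proof does not address. However, your proposed workaround does not quite close the gap. The remark following Proposition~\ref{prop:FGS_of_a_fibration_with_separably_rationally_connected_geometric_generic_fiber} relaxes only the separable rational connectedness of $X_{\bar\eta}$ to triviality of $\pi_1^N(X_{\bar\eta})$; it does not drop the reduced-fibers hypothesis. And Remark~\ref{remark:Simplification_of_pull-back_of_an_essentially_finite_in_the_normal_case} simplifies Lemma~\ref{lemma:If_the_pull-back_of_a_bundle_is_ess_finite_then_it_is_ess_finite}, which is only the \emph{final} step in the proof of Lemma~\ref{lemma:Essentially_finite_bundles_trivial_over_the_geometric_generic_fiber_are_pull-backs}; the earlier step bounding $h^0(X_s, V|_{X_s})\le r$ via Remark~\ref{remark:Trivial_ess_finite_bundles_in_terms_of_global_sections} genuinely uses that $X_s$ is reduced (on a non-reduced scheme the global sections of a rank-$r$ bundle can have dimension exceeding $r$). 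So normality of $X$ alone does not license dropping reduced fibers from Lemma~\ref{lemma:Essentially_finite_bundles_trivial_over_the_geometric_generic_fiber_are_pull-backs}. In short, your argument is at least as complete as the paper's two-line proof, and your observation about the missing hypothesis is well taken, but the patch you offer still needs further justification.
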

\proof
We can easily see that $\pi^{N}(f):\pi_{1}^{N}(X) \rightarrow \pi_{1}^{N}(C)$ is faithfully flat from a corollary under \cite[II Prop. 6 (c)]{NoriTFGS81}, thus this fibration satisfies the hypotheses of Proposition \ref{prop:FGS_of_a_fibration_with_separably_rationally_connected_geometric_generic_fiber}.
\endproof
In his article, Gounelas also characterizes \textbf{elliptically connected varieties} (Definition \ref{defi:Varieties_connected_by_genus_g_curves} with $g=1$) over an algebraically closed field of characteristic 0:
\begin{proposition}[Theorem 6.2 \cite{GounelasFreeCurves}] \label{prop:Elliptically_connected_varieties_in_characteristic_0}
	Let $X$ be a smooth and projective variety over an algebraically closed field of characteristic 0. Then $X$ is elliptically connected if and only if it is either rationally connected or a rationally connected fibration over an elliptic curve.
\end{proposition}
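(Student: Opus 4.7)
The plan is to prove the two implications separately; the reverse direction is essentially constructive, while the forward direction proceeds via the maximally rationally connected (MRC) fibration followed by a structural analysis of its base.

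For the backward direction, if $X$ is rationally connected I would produce the required family of smooth elliptic curves as follows. Through a general pair $(x_1,x_2) \in X \times X$ there passes a very free rational curve $\phi : \mathbb{P}^1 \to X$ meeting both points; composing $\phi$ with any degree two cover $E \to \mathbb{P}^1$ ramified at the two preimages of $x_1$ and $x_2$ yields a smooth elliptic curve through $x_1$ and $x_2$, and letting the pair vary gives a family $\mathcal{C} \to Y$ with $u^{(2)}$ dominant. If instead $f : X \to E$ is a rationally connected fibration over an elliptic curve, then by the Graber--Harris--Starr theorem $f$ admits a section $\sigma : E \to X$; for general $x_1,x_2$ I would connect each $x_i$ to $\sigma(f(x_i))$ by a very free rational curve inside the fiber $X_{f(x_i)}$ and join $\sigma(f(x_1))$ to $\sigma(f(x_2))$ along $\sigma(E)$. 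The resulting nodal curve of arithmetic genus one smooths to a genuine elliptic curve (in characteristic zero the rational tails attached at smooth points of $\sigma(E)$ deform freely inside the RC fibers), and the construction varies in families.

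For the forward direction, let $\pi : X \dashrightarrow B$ be the MRC fibration of $X$; by Graber--Harris--Starr the base $B$ is not uniruled. Taking the given family $\mathcal{C} \to Y$ of elliptic curves in $X$ and composing $u$ with $\pi$ (after a birational modification making $\pi$ a morphism) produces, for each $y \in Y$, an image of $\mathcal{C}_y$ in $B$ which is either a point or an irreducible curve. Non-uniruledness of $B$ rules out a rational image, and a non-constant image of an elliptic curve in a non-uniruled variety is forced to be an elliptic curve (isogeny image). Dominance of $u^{(2)}$ then transfers to the images, so $B$ is itself elliptically connected.

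The main obstacle, and the genuinely delicate step, is to conclude that a smooth projective, non-uniruled, elliptically connected variety $B$ in characteristic zero is either a point or an elliptic curve. My approach would be via the Albanese morphism $\mathrm{alb} : B \to \mathrm{Alb}(B)$: the images of the elliptic curves in $B$ map to translates of elliptic subgroups of $\mathrm{Alb}(B)$, so $\mathrm{alb}(B)$ is connected by elliptic subgroups, which forces $\dim \mathrm{Alb}(B) \leq 1$. In the case $\mathrm{Alb}(B) = 0$, all elliptic curves in $B$ must be contracted by $\mathrm{alb}$, but since $B$ carries no nontrivial abelian quotient and is non-uniruled, this combined with elliptic connectivity forces $B$ to be a point. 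In the case $\dim \mathrm{Alb}(B) = 1$, the Albanese map of a non-uniruled variety is surjective with non-uniruled fibers; inducting on dimension using the fact that each fiber is itself non-uniruled and elliptically connected (via the restrictions of the original family) forces the fibers to be points, so $\dim B = 1$ and $B$ is an elliptic curve. This completes the forward implication.
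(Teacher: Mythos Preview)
The paper does not prove this statement at all: it is quoted verbatim as Theorem~6.2 of \cite{GounelasFreeCurves} and used as a black box. So there is no ``paper's own proof'' to compare against, and your proposal should be judged on its own.

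Your backward direction is fine and is essentially the standard argument (double covers of very free rational curves in the rationally connected case; Graber--Harris--Starr sections plus comb smoothing in the fibered case).

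Your forward direction, however, has two genuine gaps. First, in the case $\mathrm{Alb}(B)=0$ you simply assert that a non-uniruled, elliptically connected variety with trivial Albanese must be a point. This is the heart of the matter and you give no mechanism for it. A priori nothing you have said rules out, say, a non-uniruled surface with $q=0$ carrying a connecting family of elliptic curves; you need an actual argument here (Gounelas controls the size of $H^0$ of symmetric differentials restricted to the moving curves, which is where the genus bound enters). Second, in the case $\dim\mathrm{Alb}(B)=1$ your induction does not run: you claim the fibres of $\mathrm{alb}$ are themselves elliptically connected ``via the restrictions of the original family'', but the elliptic curves in your family are, by your own set-up, mapping \emph{onto} the Albanese (they are translates of the generating elliptic subgroup), so they are transverse to the fibres of $\mathrm{alb}$ and do not restrict to curves in those fibres at all. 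A smaller point: your claim that $\dim\mathrm{Alb}(B)\le 1$ needs the observation that all curves in a connected family map to translates of a \emph{single} elliptic subgroup of $\mathrm{Alb}(B)$ (by rigidity of abelian subvarieties); as written, ``connected by elliptic subgroups'' does not by itself bound the dimension, since for instance $E\times E$ is swept out by elliptic subgroups.
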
 
Under the hypotheses of this proposition, the fibration $f:X \rightarrow E$ has separably rationally connected geometric fibers, then $X$ is $E$-connected by Proposition \ref{prop:A_special_type_of_C-connected_varieties}. \\
In positive characteristic, as we mentioned under Definition \ref{defi:Varieties_connected_by_genus_g_curves} and in Remark \ref{remark:Remarks_rationally_connected_varieties}, the condition of separability for rationally connected varieties is not always satisfied, and thus it is not certain if the fibration $f:X \rightarrow E$ makes $X$ elliptically connected, but at least we can characterize its FGS:
\begin{theorem} \label{teo:Rationally_connected_fibrations}
	Let $k$ be an uncountable algebraically closed field, and let $X$ be a smooth projective variety over $k$. Assume there is a projective fibration $f:X \rightarrow E$ where $E$ is an elliptic curve such that all geometric fibers are rationally connected. Then, there exists rational compatible points $x \in X(k)$ and $e \in E(k)$ such that the following sequence of group-schemes is exact:
	$$\pi_{1}^{N}(X_{s},x) \rightarrow \pi_{1}^{N}(X,x) \rightarrow \pi_{1}^{N}(E,e) \rightarrow 1 .$$
	Moreover, if $\text{char}(k)=0$, $f$ induces an isomorphism $\pi_{1}^{\text{\'et}}(X,\bar{x}) \rightarrow \pi_{1}^{\text{\'et}}(E,\bar{e})$ for compatible geometric points.
\end{theorem}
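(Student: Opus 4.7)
The first assertion should follow by a direct application of Theorem \ref{theorem:Main_theorem} to the morphism $f \colon X \to E$. We verify the hypotheses: $X$ is smooth projective, hence proper and normal; $E$ is a one-dimensional abelian variety; and $f$ is a proper fibration. The geometric fibers are rationally connected by hypothesis, so in particular reduced and connected; by Proposition \ref{prop:FGS_of_rationally_connected_varieties} (applied to each geometric fiber, passing to its normalization if necessary, which does not enlarge the FGS since a finite universal homeomorphism induces an isomorphism on FGS up to Frobenius twists) they possess a finite fundamental group-scheme. This places us in the parenthetical case ``$X$ normal and the fibers are rationally connected'' of Theorem \ref{theorem:Main_theorem}, and the exact sequence for $\pi_1^N$ follows at once.

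For the second assertion, in characteristic zero, generic smoothness of $f$ shows that the locus $U \subseteq E$ over which $f$ is smooth is a nonempty open subscheme, and hence $U(k)$ is nonempty. Since $k$ is algebraically closed, FGS and \'etale fundamental groups at different rational base points of a connected scheme are canonically isomorphic, so we are free to take the point $e \in E(k)$ produced in the first part to lie in $U$; fix a geometric point $\bar{x}$ above $\bar{e}$. The fiber $X_e$ is then a smooth projective rationally connected variety over an algebraically closed field of characteristic zero, so by the simple-connectedness theorem of Campana and of Koll\'ar--Miyaoka--Mori (or equivalently, the triviality of $\pi_1^{N}(X_e)$ from Proposition \ref{prop:FGS_of_rationally_connected_varieties} combined with the Riemann existence theorem), we have $\pi_{1}^{\text{\'et}}(X_e, \bar{x}) = 1$.

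The homotopy exact sequence for the \'etale fundamental group (SGA 1, Exp. X, Cor.~1.4) applies to any proper morphism satisfying $f_* \Strsh{X} = \Strsh{E}$; in our setting this latter equality is automatic since $f$ is proper with geometrically reduced and connected fibers. It produces the exact sequence
$$\pi_{1}^{\text{\'et}}(X_e, \bar{x}) \to \pi_{1}^{\text{\'et}}(X, \bar{x}) \to \pi_{1}^{\text{\'et}}(E, \bar{e}) \to 1,$$
and triviality of the left-hand term immediately upgrades surjectivity of the right-hand arrow to the sought-for isomorphism.

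The only genuine obstacle is verifying the finiteness of the FGS of the geometric fibers when these are not a priori normal; this is resolved by Proposition \ref{prop:FGS_of_rationally_connected_varieties} together with a normalization argument, as indicated above. The remaining steps are straightforward once the correct smooth rational fiber is chosen.
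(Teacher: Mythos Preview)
Your argument for the first assertion matches the paper's: both apply Theorem~\ref{theorem:Main_theorem} directly, invoking Proposition~\ref{prop:FGS_of_rationally_connected_varieties} for finiteness of the fibers' FGS. Your normalization aside is unnecessary, since you are already invoking exactly the parenthetical case of Theorem~\ref{theorem:Main_theorem}; it is also not quite right as stated, since the normalization map of a non-normal variety is finite birational but typically \emph{not} a universal homeomorphism (it can separate branches). Drop that sentence and simply appeal to the parenthetical hypothesis, as the paper does.

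For the characteristic-zero assertion you take a genuinely different route. The paper identifies $\pi_1^N$ with $\pi_1^{\text{\'et}}$ in characteristic zero and then appeals to Corollary~\ref{corollary:FGS_of_particular_family_of_C-connected_varieties} (hence Proposition~\ref{prop:FGS_of_a_fibration_with_separably_rationally_connected_geometric_generic_fiber}), using that the geometric generic fiber is separably rationally connected to conclude that $\pi^N(f)$ itself is an isomorphism. You instead work directly with the \'etale homotopy exact sequence from SGA~1 and the simple connectedness of a smooth projective rationally connected fiber. Both arguments are correct. Yours is more self-contained and classical, bypassing the paper's FGS machinery entirely for this part; the paper's route has the mild advantage that it actually gives the stronger conclusion $\pi_1^N(X)\cong\pi_1^N(E)$ (not merely on the \'etale quotient), though in characteristic zero this comes to the same thing.
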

\proof
The first part comes directly from Theorem \ref{theorem:Main_theorem}, as the FGS of the geometric fibers of this morphism, that are rationally connected, are finite (Proposition \ref{prop:FGS_of_rationally_connected_varieties}). \\
If $k$ has characteristic zero, as the fundamental group-scheme and the \'etale fundamental group are effectively the same in this case \cite[Corollary 6.7.20]{Sza}, the conclusion follows from Corollary \ref{corollary:FGS_of_particular_family_of_C-connected_varieties}.
\endproof
%\appendix
%\section{Core of a subgroup-scheme (maybe?)} \label{appx:Core_of_a_subgroup-scheme}
%\proof of the Theorem \ref{teoDimension}.
%\endproof

\bibliography{biblio_papers}

%\begin{flushright}Marco Antei\\ 

%Department of Mathematics \\
%Ben Gurion University of the Negev\\
%Be'er Sheva 84105, Israel\\
%E-mail:  \texttt{anteim@math.bgu.ac.il}\\
%\texttt{marco.antei@gmail.com}
%\end{flushright}

\end{document}